\theoremstyle{thmstyleone}%
\newtheorem{teorema}{Theorem}[section]
\newtheorem{lema}[teorema]{Lemma}%
\newtheorem{corolario}[teorema]{Corollary}%
\newtheorem{proposicao}[teorema]{Proposition}%
\newtheorem{definicao}[teorema]{Definition}%
\newtheorem{exemplo}[teorema]{Example}%
\newcommand{\R}{{\mathbb R}}
\newcommand{\N}{{\mathbb N}}
\newcommand{\C}{\mathcal{C}}
\newcommand{\F}{\mathbb{F}}
\newcommand{\Ad}{{\rm Ad}}
\newcommand{\g}{\mathfrak{g}}
\renewcommand{\k}{\mathfrak{k}}
\newcommand{\s}{\mathfrak{s}}
\renewcommand{\a}{\mathfrak{a}}
\newcommand{\m}{\mathfrak{m}}
\newcommand{\n}{\mathfrak{n}}
\newcommand{\p}{\mathfrak{p}}
\newcommand{\la}{\lambda}
\newcommand{\T}{\Theta}
\renewcommand{\o}[1]{{\overline{#1}}}
\begin{document}

\title[Control sets on maximal compact subgroups]{Control sets on maximal compact subgroups}

\author*[1]{\fnm{Mauro} \sur{Patr\~ao}}\email{mpatrao@mat.unb.br}

\author[2]{\fnm{La\'ercio} \sur{dos Santos}}\email{laercio.santos@ufjf.br}

\affil*[1]{\orgdiv{Department of Mathematics}, \orgname{Universidade de Bras\'ilia}, \orgaddress{
		 \city{Bras\'ilia}, 
		 \state{DF}, \country{Brazil}}}

\affil[2]{\orgdiv{Department of Mathematics}, \orgname{Universidade Federal de Juiz de Fora}, \orgaddress{
		\city{Juiz de Fora}, 
		\state{MG}, \country{Brazil}}}

\abstract{Let $G$ be a noncompact connected semisimple Lie group, with finite center. In this paper we study the action of semigroups $S$ of $G$, with nonempty interior, acting on maximal compact connected subgroups $K$ of $G$. When $S$ is connected it is well known that the invariant control sets on $K$ are the connected components of $\pi^{-1}(C)$, where $\pi$ is the canonical projection of $K$ onto $F$, $F$ is the flag type of $S$ and $C$ is the only invariant control set for $S$ on $F$. One of the main results of the present paper describes the set of transitivity of a control set, not necessarily invariant, of a semigroup $S$, not necessarily connected, acting on $K$, as fixed points of regular elements in $S$. Furthermore, we show that the number of control sets on $K$ is the product of the number of control sets on the maximal flag manifold of $G$ by the number of invariant control sets on $K$.}

\keywords{Control sets, semisimple Lie groups, maximal compact subgroups, semigroups}


\pacs[MSC Classification]{22E46, 22F30, \textcolor{black}{93B05}}

\maketitle

\section{Introduction}

In this paper we study the action of semigroups with nonempty interior of noncompact connected semisimple Lie groups $G$, with finite center, on their maximal compact connected subgroups. If $G=KAN$ is an Iwasawa decomposition, then the subgroup $K$ identifies with the homogeneous space $G/AN$. The Iwasawa projection of $G$ onto $K$ induces a smooth action of $G$ on $K$ (see Subsection \ref{sec:conjuntocontrolek}) and, thus, $S$ acts on $K$ by restriction. Let us consider the canonical projection $\pi : K \to K/M$, of $K$ onto the maximal flag manifold $K/M$, which is a principal bundle, where $M$ is the centralizer of $A$ in $K$. The action of semigroups, with nonempty interior, on flag manifolds is well known (see for instance \cite{sm93, smt, smo}) and among its applications is included \textcolor{black}{the study of affine control systems given by right invariant vector fields on semisimple Lie groups. In this case, when the control system satisfies the Lie algebra rank condition, the semigroup of the system has nonempty interior and some controllability results can be obtained from semigroup theory (see \cite{j,js,rsms}). Another application is} the generalization of the Selgrade's theorem for linear skew product flows on projective bundles (\cite{selgrade}) to semiflows of endomorphisms of flag bundles (see \cite{msm1}), using the theory of shadowing semigroups developed in \cite{msm0}. In \cite{sms}, it is shown that if $S$ is a connected semigroup with nonempty interior, and $D$ is an invariant control set on $G/AN$, then $D$ is a connected component of $\pi_{\T}^{-1}(C_{\T})$, where $\T=\T(S)$ is the flag type of $S$, $\pi_{\T}:G /AN\to \F_{\T}$ is the canonical projection and $C_{\T}$ is the only invariant control set for $S$ on $\F_{\T}$. Thus, $D$ is diffeomorphic to $C_{\T} \times F_0$ where $F_0$ is a connected component of the fiber $F$ of $\pi_{\T}$.

One of the main results of the present paper  describes the set of transitivity of a control set, not necessarily invariant, of a semigroup with nonempty interior, not necessarily connected, acting on $K \simeq G/AN$, as fixed points of regular elements in the semigroup (see Theorem \ref{teo:controleweylk}). Furthermore, we determine the number of control sets on $K$ (see Theorem \ref{teo:nocontrolsetk}). The main idea here is to use semigroup theory, acting on the maximal flag manifold (see \cite{sm93, smt}), the relationship between control sets on the total space and on the base space, of an associated bundle (see \cite{msm1, msm0}) and the dynamics of hyperbolic elements acting on $K$.

Let us now describe the contents of the paper. In Section \ref{sec:preliminaries}, we introduce some definitions and results that are used here. In Subsection \ref{sec:local-semigroup}, we recall some concepts from \cite{msm1} and \cite{msm0} about the action of local semigroups of continuous maps on topological spaces, the action of local semigroup of local endomorphisms of a principal bundle and how it induces semigroups on the total space, on the base space and on the structure group (which acts on the fiber) of the associated bundle. Finally, we show a result (see Proposition \ref{prop:d0a0}), which establishes a bijection between the control sets on the total space that intersect a fiber over a control set on the base space, with the control sets of the induced semigroup, on the fiber, of the associated bundle. Furthermore, this bijection preserves the order of the control sets. In Subsection \ref{subsec:homspaces}, we turn our attention to homogeneous spaces. First, we describe some objects induced by an admissible triple, which consists of a Cartan involution, a maximal abelian subalgebra and a Weyl chamber. After that, we introduce the canonical objects for objects determined by an admissible triple. Furthermore, we establish a result that decomposes any conjugate of the unipotent subgroup, by an element of the Weyl group, as a product of two subgroups (see Proposition \ref{prop:proposicao2}). Finally, we establish some notations, definitions and results, related to semigroup theory, with nonempty interior, acting on flag manifolds (see \cite{sm93,smt,smo}).

In Section \ref{sec:mainresults}, we state and prove the main results of this paper. First, we obtain some general results using the projection $\pi : K \to K/M$ and some results relating control sets on the total space $K$ with those on the base space $K/M$, along with semigroup theory on $K/M$. After that, we define fixed point components of the type $u$ and study some of their properties, among them, those that relate these components to control sets. After that we do some technical lemmas in preparation for the main results which \textcolor{black}{parameterizes the control sets of $S$ on the compact subgroup $K$ (or on $G/AN$) by elements of the group $U$ (see Theorem \ref{teo:Du}),} describes the transitivity set of a control set as fixed points of regular elements in $S$ (see Theorem \ref{teo:controleweylk}) and state that the number of control sets on $K$ (or on $G/AN$) is the product of the number of control sets on the maximal flag manifold by the cardinality of a quotient group (see Theorem \ref{teo:nocontrolsetk}). Finally, we made examples of the open semigroup of matrices with positive entries, in $\mathrm{Sl}(2, \R)$ and $\mathrm{Sl}(3, \R)$.

\section{Preliminaries}\label{sec:preliminaries}

In this section we introduce some concepts and results that will be used in this paper. For more details see, for instance, \cite{msm1, msm0} (for Subsection \ref{sec:local-semigroup}) and \cite{w, sm93, smt} (for Subsection \ref{subsec:homspaces}).

\subsection{Semigroups and control sets}\label{sec:local-semigroup}

Let $X$ be a topological space. A \emph{local map} on $X$ is a continuous map $\phi:\mathrm{dom}\phi \to X$, where $\mathrm{dom }\phi \subset X$ is an open set. A \emph{local semigroup on $X$} is a family $S$ of local maps that is closed to the composition of maps, in the following sense: if $\phi, \psi \in S$ and $\phi^{-1}(\mathrm{dom}\psi) \neq \emptyset$ then the composition
\[
\psi\circ \phi:\phi^{-1}(\mathrm{dom}\psi) \to X
\]
also belongs to $S$. A local semigroup $S$ on $X$ acts on $X$ by evaluation of maps. For $x \in X$, we denote its
orbit and its backward orbit, respectively, by $Sx$ and $S^*x$, i.e.,
\[
Sx=\{\phi(x):\phi \in S, x \in \mathrm{dom}\phi\} ~\textrm{ and }~ S^*x=\{y: \exists \phi\in S, \phi(y)=x\}.
\]
The action of $S$ on $X$ induces transitive relations $\preceq$, $\preceq_w$ and $\preceq_s$ on $X$, called respectively \emph{algebraic, weak and strong relations} and defined in the following way. Given $x, y \in X$,
\begin{enumerate}
    \item $x \preceq y$ if and only if $y\in Sx$.
    \item $x \preceq_w y$ if and only if $y\in \mathrm{cl}(Sx)$.
    \item $x \preceq_s y$ if and only if $x\in \mathrm{int}(S^{*}y)$.
\end{enumerate}
In general, given a relation $\leq$ on a set, its \emph{symmetrization} $\sim$ is defined by $x\sim y$ if and only if $x\leq y$ and $y\leq x$. We denote by $\sim$, $\sim_w$ and $\sim_s$ the symmetrizations of $\preceq$, $\preceq_w$ and $\preceq_s$, respectively. It is clear that $\sim$, $\sim_w$ and $\sim_s$ are transitive and symmetric but may fail to be reflexive. 
We also denote by $[x]$, $[x]_w$ and $[x]_s$ the classes of $x$ with respect to $\sim$, $\sim_w$ and $\sim_s$, respectively. We have $[x]_s\subset [x]\subset [x]_w$, for all $x \in X$. 
It is not difficult to see that $[x]_s \neq \emptyset$ if and only if $x\sim_sx$. In this case, $x$ is said to be \emph{$S$-self-accessible}.

A \emph{control set of $S$} is a  weak class $[x]_w$ of an element $x\in X$ that is $S$-self-accessible. Let $D=[x]_w\subset X$ be a control set of $S$. If $y \in D$ is self-accessible, then $[y]_s=[x]_s$ (see \cite{msm0}, Corollary 4.7), i.e., in each control set there is a unique strong class, called the \emph{transitivity set} of $D$ and denoted by $D_0$. We have
\[
D_0=\{y \in D: y \text{ is self-accessible}\}.
\]
A control set $D$ is called \emph{$S$-invariant} if $Sx\subset D$, for all $x \in D$. Traditionally, in the context of a semigroup $S$ with nonempty interior of a Lie group $G$, a control set $D$ is a weak class such that there exist $x \in D$ and $g$ in the interior of $S$ with $gx = x$, while its transitivity set is given 
\[
D_0 = \{x \in D : gx = x \mbox{ for some } g \mbox{ in the interior of } S \}
\]
and it can be proven that $D_0$ is dense in $D$ (see \cite{smm}). Since this traditional definition of transitivity set depends only on the interior of $S$, without lost of generality, we can assume that we are dealing with open semigroups of Lie groups. And for open semigropus, it follows that the traditional definition of transitivity set coincides with the one previously presented. 

Let $A$ and $B$ be subsets of $X$ such that $A\subset B$. Then $A$ is called \emph{forward invariant with respect to $B$} if $Sx \cap B \subset A$, for all $x \in A$. If $D$ is a control set, then $D_0$ is forward invariant with respect to $D$ and $D_0$ is dense in $D$ (see \cite{msm0}, Proposition 4.10). The semigroup $S$ is said \emph{accessible at a subset $A\subset X$} if $\mathrm{int}(Sx) \neq \emptyset$ for all $x \in A$. If $A=X$, we say simply that $S$ is \emph{accessible}.

The \emph{order between control sets} is defined in such way that, if $D$ and $D'$ are control sets, then $D\leq D'$ if, and only if, $x\preceq x'$, for some $x\in D_0$ and $x' \in D'_0$.

Let $Q$ be a topological space, $G$ a topological group and $\pi:Q\to X$ a locally trivial principal bundle with structural group $G$. The right action of $G$ on $Q$ is denoted by $(q,a)\in Q\times G \mapsto q\cdot a \in Q$. We denote the fiber over $x \in X$ by $Q_x$.

An associated fiber bundle $E = Q \times _G F \to X$ is constructed via a continuous action of $G$ on the topological space $F$, the typical fiber. The total space $E$ is the quotient $Q \times F/\sim$ where $\sim$ is the equivalence relation $(p, v) \sim (q, w)$ if and only if $q = pa$ and $w=a^{-1}v$. We denote the equivalence class of $(q, v)$ by $q \cdot v \in E$. 

A \emph{local endomorphism of $Q$} is a continuous map $\phi : \mathrm{dom}\phi \to Q$ such that
\begin{enumerate}
    \item $\mathrm{dom}\phi = \pi^{-1}(U)$ where $U \subset X$ is an open set, and
    \item $\phi(qa) = \phi(q) a$ for every $q \in \mathrm{dom}\phi$ and $a \in G$.
\end{enumerate}
We denote by $\mathrm{End}_l(Q)$ the set of all local endomorphisms of $Q$, which is clearly a local semigroup. A mapping $\phi \in \mathrm{End}_l(Q)$ maps fibers into fibers and hence induces a continuous map from $\pi(\mathrm{dom}\phi)$ into $X$, which will be denoted by $\phi_X$. Also, if $E \to X$ is an associated bundle we can define a continuous map by $\phi_E (q \cdot v) = \phi (q) \cdot v$. Its domain is the open set in $E$ over $\pi(\mathrm{dom}\phi)$.

A \emph{local semigroup of endomorphisms $S_Q$} of $Q$ is a local semigroup such that $S_Q \subset \textrm{End}_l(Q)$. $S_Q$ induces local semigroups $S_X$ on $X$ and $S_E$ on $E$, given by
\[
S_X=\{\phi_X:\phi \in S_Q\} \textrm{ and } S_E=\{\phi_E:\phi \in S_Q\}.
\]
We denote by $\pi$ both projections $\pi : E \to X$ and $\pi : Q \to X$. 

For each $x \in X$, we define
\[
S_Q^x = \{\phi \in S_Q : \phi Q_x = Q_x\},
\]
which is a semigroup of homeomorphisms of $Q_x$. After fixing $q \in Q_x$, the action of $S_Q^x$ on the fiber $Q_x$ is given by the following subsemigroup of $G$:
\[
S_q = \{a \in G : qa \in S_Qq\}.
\]
The intersection of $\mathrm{int}(S_Q^*q)$ with the fibers is given by
\[
U_q=\{b \in G: qb \in \mathrm{int}(S_Q^*q)\}.
\]
Let $C$ be a control set of $S_{X}$. The semigroup $S_{Q}$ is said \emph{backward accessible over $C$} if there is $q\in \pi ^{-1}\left( C_{0}\right)$ such that
\begin{equation*}\label{eqacess}
	\mathrm{int}\left( S_{Q}^{*}q\right) \cap \pi ^{-1}\left( C
	\right) \neq \emptyset .
\end{equation*}

\textcolor{black}{We say that the \textit{action of $G$ on $F$ is open} if for every open set $U\subset G$ and every $x \in F$, the orbit $Ux$ is an open set.}

\begin{proposicao}\label{prop:d0a0}
    Let $E = Q \times _{G}F$ such that \textcolor{black}{the action of $G$ on $F$ is open and transitive} and $S_{Q}$ a local semigroup such that its backward orbits are open sets. Also let $C \subset X$ be a control set and $q \in \pi^{-1}(C_{0})$. For every control set $ D\subset E$ of $S_{E}$ with $D_{0}\cap E_{\pi \left( q\right) }\neq \emptyset $, there is a control set $ A_{q}^{D} \subset F$ of $S_{q}$ such that
	\begin{equation*}
		D_{0}\cap E_{\pi \left( q\right) }=q\cdot (A_{q}^{D})_{0}.
	\end{equation*}
	Furthermore, the map $D \mapsto A_{q}^{D}$ is a bijection between the control sets of $S_{E}$ such that $D_{0} \cap E_{\pi \left( q\right) }\neq \emptyset $ and the control sets of $S_{q}$ on $F$, and preserves the order between control sets.
\end{proposicao}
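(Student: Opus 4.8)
The plan is to identify the fiber $E_{\pi(q)}$ with $F$ via the homeomorphism $\theta_{q}\colon F\to E_{\pi(q)}$, $\theta_{q}(v)=q\cdot v$, and to transfer the whole order structure of control sets through $\theta_{q}$, the only substantial analytic input being an openness statement for the backward orbits of $S_{E}$ at points of $E_{\pi(q)}$. The first step is a ``dictionary''. Unravelling the definitions of $\phi_{X}$ and $\phi_{E}$, one sees that for $\phi\in S_{Q}$ the point $\phi_{E}(\theta_{q}(v))$ is defined and lies in $E_{\pi(q)}$ exactly when $\phi_{X}(\pi(q))=\pi(q)$, that is, $\phi(q)=qa$ with $a\in S_{q}$, and then $\phi_{E}(\theta_{q}(v))=\theta_{q}(av)$; hence $S_{E}\theta_{q}(v)\cap E_{\pi(q)}=\theta_{q}(S_{q}v)$. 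Dually, writing an arbitrary point of $E$ as $p\cdot u$ and solving $\phi_{E}(p\cdot u)=q\cdot v$, together with the identity $S_{Q}^{*}(qa)=(S_{Q}^{*}q)a$ for $a\in G$ (a consequence of $\phi(xa)=\phi(x)a$ and of $\mathrm{dom}\phi$ being $G$-saturated), yields the clean formula $S_{E}^{*}\theta_{q}(v)=(S_{Q}^{*}q)\cdot v:=\{p\cdot v:p\in S_{Q}^{*}q\}$; intersecting with the fiber, and using that $U_{q}=S_{q}^{-1}$ and that, by transitivity, each element of $G$ acts bijectively on $F$, one gets $S_{E}^{*}\theta_{q}(v)\cap E_{\pi(q)}=\theta_{q}(S_{q}^{*}v)$.

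The technical heart is to show that $S_{E}^{*}\theta_{q}(v)=(S_{Q}^{*}q)\cdot v$ is open in $E$; I expect this to be the step requiring the most care, and it is exactly where the three hypotheses enter. Given $p_{0}\cdot v$ in this set, with $p_{0}\in S_{Q}^{*}q$ open (backward orbits of $S_{Q}$ are open), take a local trivialization of the principal bundle $Q$ around $\pi(p_{0})$ and write a point of $E$ near $p_{0}\cdot v$ as $p\cdot u$ with $p$ close to $p_{0}$ in $Q$ and $u$ close to $v$ in $F$. Since the action of $G$ on $F$ is open and transitive, for a small neighbourhood $W$ of the identity of $G$ the set $Wv$ is an open neighbourhood of $v$, so $u=bv$ for some $b\in W$ and therefore $p\cdot u=(pb)\cdot v$; since $pb\to p_{0}$ as $p\to p_{0}$ and $b\to 1$, for $p$ close to $p_{0}$ and $b\in W$ small we have $pb\in S_{Q}^{*}q$, hence $p\cdot u\in(S_{Q}^{*}q)\cdot v$. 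It follows that $\mathrm{int}_{E}(S_{E}^{*}\theta_{q}(v))=S_{E}^{*}\theta_{q}(v)$ and that $S_{q}^{*}v=\theta_{q}^{-1}\bigl(S_{E}^{*}\theta_{q}(v)\cap E_{\pi(q)}\bigr)$ is open in $F$. Combining this with the dictionary gives the key equivalence: for $v,w\in F$, $\theta_{q}(w)\preceq_{s}\theta_{q}(v)$ for $S_{E}$ if and only if $w\preceq_{s}v$ for $S_{q}$; in particular $\theta_{q}(v)$ is $S_{E}$-self-accessible iff $v$ is $S_{q}$-self-accessible, and $\theta_{q}(w)\sim_{s}\theta_{q}(v)$ for $S_{E}$ iff $w\sim_{s}v$ for $S_{q}$.

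From this equivalence the remainder is formal. Given a control set $D$ of $S_{E}$ with $D_{0}\cap E_{\pi(q)}\neq\emptyset$, choose $\theta_{q}(v_{0})\in D_{0}$; then $v_{0}$ is $S_{q}$-self-accessible, so $A_{q}^{D}:=[v_{0}]_{w}$ (weak class for $S_{q}$ on $F$) is a control set of $S_{q}$ on $F$ with transitivity set $(A_{q}^{D})_{0}=[v_{0}]_{s}$, and it is independent of the choice of $v_{0}$ because two such choices are $\sim_{s}$-equivalent in $S_{E}$, hence in $S_{q}$. Since $D_{0}=[\theta_{q}(v_{0})]_{s}$, applying the equivalence in both directions yields $D_{0}\cap E_{\pi(q)}=\theta_{q}\bigl((A_{q}^{D})_{0}\bigr)=q\cdot(A_{q}^{D})_{0}$, which is the asserted equality.

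Finally, bijectivity and order. Injectivity of $D\mapsto A_{q}^{D}$ holds because $A_{q}^{D}=A_{q}^{D'}$ forces $D_{0}\cap E_{\pi(q)}=D'_{0}\cap E_{\pi(q)}\neq\emptyset$, hence $D=D'$; surjectivity follows by taking $v$ in the transitivity set of a given control set $A$ of $S_{q}$ on $F$, noting that $\theta_{q}(v)$ is $S_{E}$-self-accessible and that its weak class $D$ satisfies $A_{q}^{D}=A$. For the order, if $D\le D'$ there are $x\in D_{0}$, $x'\in D'_{0}$ with $x\preceq x'$; picking $\theta_{q}(v)\in D_{0}\cap E_{\pi(q)}$ and $\theta_{q}(v')\in D'_{0}\cap E_{\pi(q)}$ (nonempty by hypothesis) and using $x\sim_{s}\theta_{q}(v)$, $x'\sim_{s}\theta_{q}(v')$ to get $\theta_{q}(v)\preceq x\preceq x'\preceq\theta_{q}(v')$, we obtain $\theta_{q}(v')\in S_{E}\theta_{q}(v)\cap E_{\pi(q)}=\theta_{q}(S_{q}v)$, i.e.\ $v\preceq v'$ in $S_{q}$ with $v\in(A_{q}^{D})_{0}$ and $v'\in(A_{q}^{D'})_{0}$, so $A_{q}^{D}\le A_{q}^{D'}$; the converse uses the forward-orbit part of the dictionary in the opposite direction. (The standing hypothesis $q\in\pi^{-1}(C_{0})$ is what guarantees $S_{q}\neq\emptyset$, so that the correspondence is non-vacuous.)
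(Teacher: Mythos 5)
Your argument is correct, and it proves the same statement by the same overall strategy the paper uses --- identify the fiber $E_{\pi(q)}$ with $F$ via $v\mapsto q\cdot v$ and transport control sets, transitivity sets and order across that identification --- but the execution is genuinely different in character. The paper's proof is essentially a verification of hypotheses followed by citations: it checks that $U_q=S_q^{-1}$ and that $S_Q$ is backward accessible over $C$ (this is where $q\in\pi^{-1}(C_0)$ is actually used), and then invokes Theorem 4.12 and Proposition 4.13 of \cite{msm1} for existence and surjectivity, plus Corollary 4.7 and Proposition 4.11 of \cite{msm0} for the bookkeeping on transitivity sets and order. You instead re-derive the content of those citations from first principles: the two formulas $S_E(q\cdot v)\cap E_{\pi(q)}=q\cdot(S_qv)$ and $S_E^{*}(q\cdot v)=(S_Q^{*}q)\cdot v$, together with your openness lemma for the latter, are precisely the mechanism hidden inside the cited results, and your openness argument is the one place where the hypotheses ``open backward orbits'' and ``open transitive action on $F$'' genuinely enter, which makes their role more transparent than in the paper (where they only appear as hypotheses to be matched against \cite{msm1}). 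The order-preservation step is essentially identical in both proofs: produce $\phi\in S_Q$ with $\phi(q)=qa$ and $v'=av$, so that $a\in S_q$. What the paper's route buys is brevity inside its own ecosystem of references; what yours buys is a self-contained proof and an explicit reduction of all three structural hypotheses to a single openness statement for $S_E^{*}$ --- at the cost of length, and of silently reproving results the paper treats as known. I found no gap; the only point worth flagging is that your parenthetical about $q\in\pi^{-1}(C_0)$ guaranteeing $S_q\neq\emptyset$ is not actually needed anywhere in your argument (self-accessibility of any point of $D_0\cap E_{\pi(q)}$ already forces $S_q\neq\emptyset$), whereas in the paper that hypothesis does real work in verifying backward accessibility over $C$.
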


\begin{proof}
    As the backward orbits of $S_{Q}$ are open sets, we have that $U_{q} = S_{q}^{-1}$ (see \cite{msm1}, Lemma 4.7). Furthermore, $S_{Q}$ is backward accessible over $C$, since $\pi(q) \in C_0$ and, thus, there is $\phi \in S_ {Q}$ such that
    \[
    \pi(q)=\phi_X(\pi(q)) = \pi(\phi(q)),
    \]
    which implies that $\phi(q)\in \pi^{-1}(C_0)$ and $\mathrm{int} \left(S_{Q}^*\phi(q)\right) \cap \pi^ {-1}(C) \neq \emptyset$, since $q \in
    S_{Q}^*\phi(q) = \mathrm{int} \left(S_{Q}^*\phi(q)\right)$.
	
    If $D$ is such that $D_{0}\cap E_{\pi(q)}\neq \emptyset $, then there exists a control set $A_{q}^{D}$ of $S_{q}$ such that $ D_{0}\cap E_{\pi \left( q \right)}= q \cdot \left( A_{q}^{D}\right) _{0}$ (see \cite{msm1}, Theorem 4.12). On the other hand, if $A \subset F$ is a control set of $S_{q}$, there is an element of $A_{0}$ fixed by an element of $S_{q} = U_{q}^{-1}$ and therefore fixed by an element of $U_{q}$. By Proposition 4.13 of \cite{msm1}, this implies that there is a control set $D$ such that $ D_{0}\cap E_{\pi \left( q\right) }= q\cdot A_{0}$. Therefore $A_{q}^{D}=A$, which shows the surjectivity of the map. Its injectivity is clear.
	
    To show that this map preserves order let $D$ and $D'$ be control sets such that $D_{0} \cap E_{\pi \left( q \right) }\neq \emptyset$ and $D'_{0} \cap E_{\pi\left( q \right) }\neq \emptyset $. Let $v \in (A_{q}^{D})_0$ and $v' \in (A_{q}^{D'})_0$. If $D \leq D'$, then there is $\phi \in S_Q$ such that $q \cdot v' = \phi_E(q \cdot v) = \phi(q) \cdot v$ (see \cite{msm0}, Proposition 4.11). So, $\phi(q) = qa$ and $v' = av$. This implies that $a \in S_q$ and $A_{q}^{D} \leq A_{q}^{D'}$. Conversely, if $A_{q}^{D} \leq A_{q}^{D'}$, then there exists $a \in S_q$ such that $v' = av$. This implies that there is $\phi \in S_Q$ such that $\phi(q) = qa$ and therefore $q \cdot v' = \phi(q) \cdot v = \phi_E(q \cdot v)$, showing that $D \leq D'$.
\end{proof}

\subsection{Homogeneous spaces}\label{subsec:homspaces}

\textcolor{black}{One of the main differences between the theory of semigroup actions on flag manifolds developed in \cite{smt} and the theory of semigroup actions on maximal compact subgroups developed in the present paper is the following. In flag manifolds, the objects are describe by the Weyl chambers which are in bijection with the classes of $G/MA$, while, in maximal compact subgroups, the objects are describe by the classes of $G/M_0A$, since the connected components of $M$ have a decisive role in these descriptions.} 

\subsubsection{Canonical objects}\label{subsec:canonicalobjects}

Let $\g$ be a real semisimple Lie algebra, $\theta$ a Cartan involution of $\g$ and $\g=\k\oplus \s$ the Cartan decomposition of $\g$ defined by $\theta$, where $\k$ is the subalgebra of fixed points and $\s$ is the eigenspace associated with the eigenvalue $-1$ of $\theta $. Given a maximal abelian subalgebra $\a$ of $\s$, we call $(\theta,\a)$ an \textit{admissible pair} of $\g$. Fixing a Weyl chamber $\a^+ \subset \a$, we call $(\theta,\a,\a^+)$ an \emph{admissible triple}. Let $G$ be a connected Lie group with finite center and Lie algebra $\g$, and $K$ be the connected subgroup of $G$ generated by $\exp(\k)$. An admissible triple $(\theta,\a,\a^+)$ determines the following objects:
$(1)$ the inner product $\langle \cdot, \cdot \rangle_{\theta}$, restricted to $\a$, 
$(2)$ the maximal abelian subgroup $A=\exp\a$, 
$(3)$ the normalizer $M_*A$ and the centralizer $MA$, of $\a$ in $G$, where $M_*$ and $M$ are the normalizer and the centralizer of $\a$ in $K$,
$(4)$ the quotient groups $U=M_*A/M_0A \simeq M_*/M_0$ \label{def:grupoU(B)} and $C=MA/M_0A\simeq M/M_0$, where $M_0$ is the identity component of $M$ and $M_*$, 
$(5)$ the Weyl group $W$, 
$(6)$ the system of (co-)roots $\Pi$ ($\Pi_{\a}$),  
$(7)$ the simple system of (co-) roots $\Sigma$ ($\Sigma_{\a}$) and the set of positive (co-)roots $\Pi^+$ ($\Pi_{\a}^+$), 
$(8)$ for each $\T \subset \Sigma$, the parabolic subalgebra $\p_{\T}$ and the parabolic subgroup $P_{\T}$, and 
$(9)$ the components of the Iwasawa decompositions of $\p_{\T}$ and $P_{\T}$. Since $G$ has finite center, it follows that $K$, $M_*$ and $M$ are compact subgroups and $U$ and $C$ are finite groups. We have that $M$ is a normal subgroup of $M_*$ and $C$ is a normal subgroup of $U$. 
\textcolor{black}{Let us assume that $G$ has a complexification to guarantee that $C$ is an abelian group (see \cite{knapp}, Theorem 7.53). The above assumptions hold for all linear groups.}

The group $G$ acts on itself by conjugation, on its Lie algebra through adjoint action and on the dual of the algebra by coadjoint action. The following notation will be used for the adjoint action of $G$ on these objects
\begin{enumerate}[$(i)$]
    \item $gwg^{-1} = \Ad(g)w\Ad(g^{-1})$ for $w \in W$,
    \item $g\alpha = \Ad(g)^*\alpha$ for $\alpha \in \Pi$, and
    \item $gX = \Ad(g)X$ for $X \in \g$.
\end{enumerate}

The \emph{sets of maximal abelian subgroups and Weyl chambers} in $G$ are defined, respectively, by
\begin{equation*}
    \mathcal{A} = \{\exp(\a) : (\theta,\a) \,\, \mbox{admissible pair}\}
\end{equation*}
and
\begin{equation*}
    \mathcal{C} = \{\exp(\a^+) : (\theta,\a,\a^+) \,\, \mbox{admissible triple}\}.
\end{equation*}
The group $G$ acts on $\mathcal{A}$ and $\C$ by conjugation. The isotropy subgroups of $A=\exp \a \in \mathcal{A}$ and of $A^+=\exp \a^+ \in \C$ are $M_*A$ and $MA$, respectively. Thus, the sets $\mathcal{A}$ and $\C$ are identified with the homogeneous spaces $G/M_*A$ and $G/MA$, respectively. Since the subgroup $M_0A$ is the identity component of both $M_*A$ and $MA$ it follows that $M_0A$ is a normal subgroup of $M_*A$ and $MA$. Therefore, the canonical projections
\begin{equation}\label{eq:fibabeliano}
    G/M_0A \to G/M_*A, ~~gM_0A \mapsto gM_*A
\end{equation}
and
\begin{equation}\label{eq:fibcamara}
    G/M_0A \to G/MA, ~~gM_0A \mapsto gMA,
\end{equation}
are principal bundles with structure groups $U$ and $C$, respectively. From the projections on (\ref{eq:fibabeliano}) and on (\ref{eq:fibcamara}), each coset $gM_0A$ determines the cosets $gM_*A$ and $gMA$ which, in turn, are identified respectively with the maximal abelian subgroup $gAg^{-1}=\exp g\a$ and with the Weyl chamber $gA^+g^{-1}=\exp g\a^+$. 
Consider the notation $B=M_0A$. The Weyl chamber determined by $gB$ will be denoted by $\la(gB)$, i.e., $\la(gB)=gA^+g^{-1}=\exp g\a^+$.

\begin{lema}\label{lema:aplicacoeslambda}
    If $(\theta,\a,\a^+)$ and $(\o{\theta},\o{\a},\o{\a}^+)$ are admissible triple such that $\a^+=\o{\a}^+$, then they determine the same objects $(1)-(9)$ mentioned above. Thus, these objects are determined by each coset $gB$, $g \in G$, and will be denoted by the juxtaposition of $(gB)$ on the right hand side. In addition, given $g \in G$, we have
    \begin{enumerate}[$(i)$]
        \item\label{enum:objconjprimeiro} $\a(g B) = g\a(B)$ and the same for $\m$, $\n$, $\Pi$, $\Sigma$, $\Pi_{\a}$, and $\Sigma_{\a}$, 
	\item $\langle gH, g\widetilde{H} \rangle({gB}) = \langle H, \widetilde{H} \rangle({B})$, for all $ H, \widetilde{H} \in         \a(B)$,
        \item $(M_*A)(gB) = g(M_*A)(B)g^{-1}$ and the same for $A$, $M$, $M_0$ and $N$,
	\item $g\T(B)=\T(gB)$, for each $\T(B)\subset \Sigma(B)$,
	\item $\p_{g\T}(gB) = g\p_{\T}(B)$, for each $\T \subset \Sigma(B)$, and
	\item\label{enum:objconjultimo} $U(gB)=gU(B)g^{-1}$ and the same for $C$ and $W$.
    \end{enumerate}
\end{lema}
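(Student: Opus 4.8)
The plan is to prove the two assertions of the lemma in turn. For the first one — that the objects $(1)$–$(9)$ depend only on $\a^+$ — let $(\theta,\a,\a^+)$ and $(\o\theta,\o\a,\o\a^+)$ be admissible triples with $\a^+=\o\a^+$. Since $\a^+$ is an open cone whose $\R$-linear span is $\a$ (and $\o\a$), we have $\a=\o\a$; what must be done is to give, for each object, a description that does not refer to $\theta$. I would proceed object by object. For $(1)$: since $\theta|_\a=-\id$, $\langle H,\o H\rangle_\theta=-B(H,\theta\o H)=B(H,\o H)$ for $H,\o H\in\a$, so the inner product restricted to $\a$ is the Killing form restricted to $\a$. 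For $(2)$: $A=\exp\a$. For $(3)$–$(5)$: $M_*A=N_G(\a)$ and $MA=Z_G(\a)$ are the normalizer and centralizer in $G$ of $\a$; since $G$ has finite center, $Z_G(\a)\cong M\times A$ with $M$ compact and $A$ a vector group, so $M$ is the unique maximal compact subgroup of $Z_G(\a)$ and $M$, $M_0$ are intrinsic; hence so are $M_0A$ (the common identity component of $Z_G(\a)$ and $N_G(\a)$) and the quotients $U=N_G(\a)/M_0A$, $C=Z_G(\a)/M_0A$, $W=N_G(\a)/Z_G(\a)$, the auxiliary group $M_*$ entering the list only via $M_*A$. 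For $(6)$–$(7)$: $\Pi$ is the set of nonzero weights of $\a$ on $\g$, the co-roots are formed from $\Pi$ via the inner product of $(1)$, and $\Pi^+$, $\Sigma$ are determined by the chamber $\a^+$. For $(8)$–$(9)$: $\m=\z_\g(\a)\cap\a^{\perp}$ (orthogonal for $B$), $\n=\bigoplus_{\alpha\in\Pi^+}\g_\alpha$, and for $\T\subset\Sigma$ the parabolic $\p_\T$ and its Iwasawa/Langlands pieces — $\a_\T=\bigcap_{\alpha\in\T}\ker\alpha$, the Levi $\z_\g(\a_\T)$ and its semisimple-plus-compact-center part, the nilradical, and the corresponding subgroups (normalizers in $G$) — are assembled from objects already seen to be $\theta$-free. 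Since every chamber in $\C$ equals $\exp(g\a^+)=\la(gB)$ for some $g$, and admissible triples with a common chamber give the same objects, the objects $(1)$–$(9)$ are well-defined functions of the coset $gB$; I denote them $(\cdot)(gB)$, with $(\cdot)(B)$ ($B=M_0A$) the objects of the fixed reference triple.

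For the second assertion, fix the reference triple $(\theta,\a,\a^+)$ and let $g\in G$. Then $\theta'=\Ad(g)\circ\theta\circ\Ad(g)^{-1}$ is again a Cartan involution, with $(-1)$-eigenspace $\Ad(g)\s$, so $(\theta',\Ad(g)\a,\Ad(g)\a^+)$ is an admissible triple whose Weyl chamber is $\exp(g\a^+)=gA^+g^{-1}=\la(gB)$. By the first part, the objects $(\cdot)(gB)$ can be computed from this triple, and it remains to check — routinely — that applying $\Ad(g)$ to the reference objects yields exactly the objects of $(\theta',\Ad(g)\a,\Ad(g)\a^+)$. For this one uses $\Ad(g)\g_\alpha=\g_{g\alpha}$ with $g\alpha=\Ad(g)^*\alpha$, the identities $N_G(\Ad(g)\a)=gN_G(\a)g^{-1}$ and $Z_G(\Ad(g)\a)=gZ_G(\a)g^{-1}$, the $\Ad(g)$-invariance of $B$, and the fact that $\Ad(g)$ sends $\a^+$ to $g\a^+$, hence $\Pi^+$, $\Sigma$, a subset $\T$, and $\p_\T$ to $\Pi^+(gB)$, $\Sigma(gB)$, $g\T:=\T(gB)$, and $\p_{g\T}(gB)$. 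This yields $(i)$ for $\a$ and likewise $\m$, $\n$, $\Pi$, $\Sigma$, $\Pi_\a$, $\Sigma_\a$; $(ii)$ from the invariance of $B$; $(iii)$ for $M_*A$, $A$, $N$, with $M$ and $M_0$ handled through their intrinsic descriptions above; $(iv)$ and $(v)$ directly; and $(vi)$ by passing $(M_*A)(gB)=g(M_*A)(B)g^{-1}$, $(MA)(gB)=g(MA)(B)g^{-1}$ and $(M_0A)(gB)=g(M_0A)(B)g^{-1}$ to the quotients defining $U$, $C$ and $W$.

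The main obstacle is entirely in the first part: the maximal compact subgroup $K$ is \emph{not} determined by $\a$, since distinct Cartan involutions sharing $\a$ produce conjugate but distinct subgroups $K$. One must therefore make sure that no object in $(1)$–$(9)$ secretly depends on $K$; the point is that $M$, $M_*$ enter only through $Z_G(\a)$, $N_G(\a)$ and their component groups, and $\m$ only through $\z_\g(\a)\cap\a^{\perp}$, all of which are intrinsic to $\a$, $\a^+$ and $G$. Everything beyond this is bookkeeping with the restricted-root space decomposition.
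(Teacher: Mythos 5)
Your proof is correct, but for the first assertion (well-definedness) it takes a genuinely different route from the paper. The paper first verifies the equivariance relations $(i)$--$(vi)$ by replacing $B$ with $(\theta,\a,\a^+)$ and $gB$ with $(\Ad(g)\theta\Ad(g)^{-1},g\a,g\a^+)$, and then \emph{deduces} well-definedness from them: since all Cartan involutions are conjugate, $(\o{\theta},\o{\a},\o{\a}^+)$ is the $g$-conjugate of $(\theta,\a,\a^+)$ for some $g$, the equality of chambers forces $g\in MA$, and applying $(i)$--$(vi)$ with this $g$ shows the objects are unchanged. You instead prove well-definedness directly and independently, by exhibiting a $\theta$-free description of each object ($\langle\cdot,\cdot\rangle_\theta|_{\a}$ as the Killing form, $M_*A=N_G(\a)$, $MA=Z_G(\a)$, $M$ as the unique maximal compact subgroup of $Z_G(\a)$, $\m=\z_\g(\a)\cap\a^\perp$, roots as $\a$-weights, etc.), and only then do the (identical) conjugation bookkeeping for $(i)$--$(vi)$. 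The paper's route is shorter because it recycles the equivariance relations it must prove anyway; but it leaves implicit the check that conjugation by $MA$ fixes each object, which is exactly where the danger lies, since $A$ does not normalize $K$ and hence $K$-dependent data could a priori move. Your route costs more case-by-case work but makes the $\theta$-independence transparent, and you correctly isolate the one real pitfall: $M$ and $M_0$ must be characterized inside $Z_G(\a)$ rather than inside $K$ (while $M_*$ alone, which is \emph{not} chamber-invariant, enters only through $M_*A$ and the quotient $U$). Both arguments leave the object-by-object verification at the level of "routine," so I see no gap relative to the paper's own standard of detail.
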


\begin{proof}
    Equalities $(\ref{enum:objconjprimeiro}) - (\ref{enum:objconjultimo})$ are easy to check by replacing $B$ by the admissible triple $(\theta, \a, \a^+)$ and $gB$ by the admissible triple
    \[
    (\Ad(g) \theta \Ad(g)^{-1}, g\a, g\a^+).
    \]
    Since Cartan involutions are all conjugate, there exists $g \in G$ such that
    \[
    (\o{\theta},\o{\a},\o{\a}^+) = (\Ad(g) \theta \Ad(g)^{-1}, g\a, g\a^+)
    \]
    and, thus, $g\a^+ =  \o{\a}^+ =\a^+$. So $g \in MA$ and therefore $\o{\a} = g\a =\a$. This implies, using relations $(\ref{enum:objconjprimeiro}) - (\ref{enum:objconjultimo})$, that $(\theta,\a,\a^+)$ and $(\o{\theta},\o{\a},\o{\a}^+)$ determine the same objects $(2)-(9)$ mentioned above. To get the same for the object $(1)$, just note that if $g \in MA$ and $ H, \widetilde{H} \in \a(B)$, then
    \begin{equation*}
        \langle H, \widetilde{H} \rangle_{g \theta g^{-1}} = \langle gH,
	g\widetilde{H} \rangle_{g \theta g^{-1}} = \langle H,
	\widetilde{H} \rangle_{\theta}.
    \end{equation*}
    Therefore these objects are determined by $\a^+$ and, consequently, by $\exp(\a^+) \in \C$.
\end{proof}

Defining
\begin{equation}\label{eq:preU}
	\mathcal{U} = \{(g'B, u) : g' \in G, u \in U(g'B)\},
\end{equation}
we have that the adjoint action of $G$ on $\mathcal{U}$, given by
\begin{equation}
    g(g'B, u) = (g g'B, g u g^{-1})
\end{equation}
where $g \in G$, is well defined since by Lemma \ref{lema:aplicacoeslambda}, $g u g^{-1} \in U(g g'B)$. \textcolor{black}{From now on, the group $U$ defined on Page \pageref{def:grupoU(B)} will be denoted by $U(B)$ and we will denote by $U$ the quotient group of $\mathcal{U}$ for this action of $G$} and called \emph{canonical group $U$ of $\g$}. An element of $U$ is an orbit of a pair $(g'B, u)$ denoted by $[(g'B, u)]$. The next proposition justifies the nomenclature
used.

\begin{proposicao}\label{prop:Ucanonico}
    For each $g \in G$ and $u \in U$, there is a unique $u(gB) \in U(gB)$ such that $u = [(gB, u(gB))]$. For every $g \in G$, the map $u \mapsto u(gB)$ is a canonical isomorphism between the canonical group $U$ and the group $U(gB)$, where for all $u, \widetilde{u} \in U$, the product given by
    \begin{equation}\label{eq:produtoU}
        u\widetilde{u} = [(gB,u(gB)\widetilde{u}(gB))]
    \end{equation}
    and the inverse given by
    \begin{equation}\label{eq:inversaU}
	u^{-1} = [(gB, u(gB)^{-1})]
    \end{equation}
    are well defined. Furthermore, for all $g,g' \in G$, we have that
    \[
    u(gg'B) = g u(g'B) g^{-1}.
    \]
\end{proposicao}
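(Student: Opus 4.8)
The plan is to reduce the whole proposition to a single fact: every $G$-orbit in $\mathcal{U}$ meets the ``slice'' $\{gB\}\times U(gB)$ in exactly one point. Granting this, the well-defined map $u\mapsto u(gB)$ and its bijectivity are immediate, and everything else follows formally. For \textbf{existence}, given $u\in U$ pick any representative $(g'B,u')$ and set $h=g(g')^{-1}$, so $hg'=g$ and hence $h\cdot(g'B,u')=(gB,\,hu'h^{-1})$; since $u'\in U(g'B)$, Lemma \ref{lema:aplicacoeslambda} (the very computation used to see that the $G$-action on $\mathcal{U}$ is well defined) gives $hu'h^{-1}\in U(hg'B)=U(gB)$, so $u=[(gB,hu'h^{-1})]$ and we may take $u(gB)=hu'h^{-1}$. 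For \textbf{uniqueness}, suppose $(gB,u_1)$ and $(gB,u_2)$ with $u_1,u_2\in U(gB)$ lie in the same orbit, say $h\cdot(gB,u_1)=(gB,u_2)$. Then $(hg)B=gB$, i.e. $g^{-1}hg\in B=M_0A$, so $h\in g(M_0A)g^{-1}=(M_0A)(gB)$ by Lemma \ref{lema:aplicacoeslambda}(iii). Since $M_0A$ is normal in $M_*A$, the group $(M_0A)(gB)$ is normal in $(M_*A)(gB)$, and conjugation by an element of a normal subgroup acts trivially on the quotient; hence $u_2=hu_1h^{-1}=u_1$ in $U(gB)=(M_*A)(gB)/(M_0A)(gB)$. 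This last point is the only step that is not pure bookkeeping, and even it is just the standard fact about normal subgroups; I do not expect a serious obstacle.

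With this in hand, $u\mapsto u(gB)$ is a well-defined bijection $U\to U(gB)$ for each $g$: surjectivity is clear (given $v\in U(gB)$, the class $[(gB,v)]$ maps to $v$) and injectivity is exactly the uniqueness just proved. Next I would derive the conjugation formula: applying the $G$-action $g$ to the representative $(g'B,u(g'B))$ of $u$ yields $(gg'B,\,gu(g'B)g^{-1})$, still in the orbit of $u$ and with $gu(g'B)g^{-1}\in U(gg'B)$ by Lemma \ref{lema:aplicacoeslambda}; uniqueness of $u(gg'B)$ then forces $u(gg'B)=gu(g'B)g^{-1}$, which is the final displayed equation.

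The conjugation formula in turn gives the independence of the choice of $g$ in (\ref{eq:produtoU}) and (\ref{eq:inversaU}): for $g_1,g_2\in G$ put $h=g_2g_1^{-1}$, so $hg_1=g_2$ and therefore $u(g_2B)=hu(g_1B)h^{-1}$ and $\widetilde{u}(g_2B)=h\widetilde{u}(g_1B)h^{-1}$; applying the action $h$ to $(g_1B,\,u(g_1B)\widetilde{u}(g_1B))$ gives
\[
h\cdot\bigl(g_1B,\,u(g_1B)\widetilde{u}(g_1B)\bigr)=\bigl(g_2B,\,u(g_2B)\widetilde{u}(g_2B)\bigr),
\]
so the two candidate definitions of $u\widetilde{u}$ name the same $G$-orbit; the argument for $u^{-1}$ is identical. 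Finally, by construction $(u\widetilde{u})(gB)=u(gB)\widetilde{u}(gB)$ and $(u^{-1})(gB)=u(gB)^{-1}$, so the bijection $u\mapsto u(gB)$ transports the group operations of $U(gB)$ to the operations just defined on $U$; these operations therefore make $U$ a group (with identity the class of $(gB,\mathrm{id})$ for any $g$), and $u\mapsto u(gB)$ is a group isomorphism onto $U(gB)$, canonical in the sense that its construction used no auxiliary choices. In short, the content of the proposition is entirely carried by Lemma \ref{lema:aplicacoeslambda} together with the normal-subgroup observation in the uniqueness step.
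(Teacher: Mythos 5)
Your proposal is correct and follows essentially the same route as the paper: the uniqueness step is exactly the paper's computation ($h\in(M_0A)(gB)$ normal in $(M_*A)(gB)$, so conjugation by $h$ fixes each coset), existence comes from acting by an element carrying $g'B$ to $gB$, and the conjugation formula plus well-definedness of the product and inverse are derived just as in the paper. The only difference is organizational (you package everything as ``each orbit meets the slice $\{gB\}\times U(gB)$ exactly once''), which changes nothing of substance.
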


\begin{proof}
    Let $h \in G$ and $s = [(hB, \widetilde{s})] \in U$. By definition, if $\widehat{s} \in U(hB)$ is such that $s =[(hB, \widehat{s})]$, then there exists $g \in G$ such that $hB = ghB $ and $\widehat{s} = g \widetilde{s} g^{-1}$. Since $h^{-1}ghB=B$ there is $m \in B$ such that $h^{-1}gh=m$, i.e.,
    \[
    g=hmh^{-1} \in hBh^{-1}=(M_0A)(hB).
    \]
    Hence, writing $\widetilde{s}=\widetilde{s}_*(M_0A)(hB)$, with $\widetilde{s}_*\in (M_*A)(hB)$, we have
    \[
    \widehat{s} = g \widetilde{s}g^{-1}=\widetilde{s}_*(\widetilde{s}_*^{-1}g \widetilde{s}_*)(M_0A)(hB)g^{-1}=\widetilde{s}_*(M_0A)(hB)=\widetilde{s},
    \]
    since $(M_*A)(hB)$ normalizes $(M_0A)(hB)$ and thus $\widetilde{s}_*^{-1}g \widetilde{s}_* \in (M_0A)(hB)$. Therefore, $\widetilde{s}$ depends only on  $s \in U$ and the coset $hB$, and will be denoted by $s(hB)$. For every $g \in
    G$, we have that
    \[
    (ghB, g s(hB) g^{-1}) \in \mathcal{U},
    \]
    and $s = [(ghB, g s(hB) g^{-1})]$, showing that $s(ghB)$ is well defined and equals $g s(hB) g^{-1}$. If $u \in U$, then $u = [(g'B, u(g'B))]$, for some $g' \in G$. Let $g \in G$ such that $gg'B=hB$. So
    \[
    u=[(gg'B, gu(g'B)g^{-1})]=[(hB,u(hB))].
    \]
    Hence, $u(hB)$ is defined for every $u \in U$.
	
    To show that the product and inverse given by (\ref{eq:produtoU}) and  (\ref{eq:inversaU}), respectively, are well defined, let $g,g' \in G$. Let $h \in G$ such that $gB = hg'B$. Therefore, as $u(hg'B) = hu(g'B)h^{-1}$, for all $u \in U$, we have that
    \begin{equation*}
	[(gB, u(gB)\widetilde{u}(gB))] = [(hg'B, h u(g'B) \widetilde{u}(g'B) h^{-1})] = [(g'B, u(g'B)\widetilde{u}(g'B))]
    \end{equation*}
    and
    \begin{equation*}
	[(gB, u(gB)^{-1})] = [(hg'B, h u(g'B)^{-1} h^{-1})] = [(g'B, u(g'B)^{-1})].
    \end{equation*}
    This shows that the map $u \mapsto u(gB)$ is clearly a homomorphism of the group $U$ in the group $U(gB)$. Furthermore, it is injective, since if $u(gB) = s(gB)$, then
    \begin{equation*}
	u = [(gB, u(gB))] = [(gB, s(gB))] = s.
    \end{equation*}
    On the other hand, it is also surjective, since, if  \textcolor{black}{$\widetilde{u}$ is a given element in $U(gB)$, defining $u = [(gB, \widetilde{u})]$, it follows that $u(gB)=\widetilde{u}$}.
\end{proof}

Similarly, canonical objects are constructed for other objects determined by cosets $gB$, $g \in G$.

Given $u \in U$ and $g\in G$, denote by $N_u(gB)$\label{def:Nu} the subgroup defined by
\begin{equation}\label{eq:defNu}
    N_u(gB)=u(gB)N(gB)u(gB)^{-1}=u_*N(gB)u_*^{-1},
\end{equation}
where $u_* \in (M_*A)(gB)$ is such that $u(gB)=u_*(M_0A)(gB)$.
The second equality in (\ref{eq:defNu}) is well defined since $(M_0A)(gB)$ normalizes $N(gB)$.
By definition, if $1$ denotes the identity of $U$, then $N_1(gB)=N(gB)$.
\textcolor{black}{We note, by equality (\ref{eq:defNu}), Proposition \ref{prop:Ucanonico} and Lemma \ref{lema:aplicacoeslambda}, that
\[
N_u(gB)=u(gB)N(gB)u(gB)^{-1}=gu(B)N(B)u(B)^{-1}g^{-1}=gN_u(B)g^{-1}.
\]
}

\textcolor{black}{If $g \in G$, $u \in U$ and $u(gB)=u'(M_0A)(gB)$, $u'\in (M_*A)(gB)$, then the set
\[
\{u'mgB:m \in (M_0A)(gB)\}
\]
is unitary. In fact, as $(M_*A)(gB)=gM_*Ag^{-1}$ and $(M_0A)(gB)=gM_0Ag^{-1}$, from Lemma \ref{lema:aplicacoeslambda}, we have that if $m \in (M_0A)(gB)$, then there are $u_* \in M_*$, $a \in A$ and $\widetilde{m} \in M_0A$ such that $u'=gu_*ag^{-1}$ and $m=g\widetilde{m}g^{-1}$. So
\[
u'mgB=(gu_*ag^{-1})(g\widetilde{m}g^{-1})gB=gu_*a\widetilde{m}B=gu_*B.
\]
We denote the only element of this set by $u(gB)gB$.
The above argument shows the following result.}

\begin{lema}\label{lema:nu}
    If $g \in G$, $u \in U$ and $u(B)=u_*(M_0A)(B)$, \textcolor{black}{$u_*\in M_*$}, then 
    \begin{enumerate}
        \item $N_u(gB)=gN_u(B)g^{-1}$.
	\item $u(gB)gB=gu_*B$.
    \end{enumerate}
\end{lema}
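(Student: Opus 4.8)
The plan is to read both items off the paragraph immediately preceding the statement, which already assembles the needed ingredients: the definition (\ref{eq:defNu}) of $N_u$, the conjugation formula $u(gg'B)=gu(g'B)g^{-1}$ from Proposition \ref{prop:Ucanonico}, and the conjugation relations $N(gB)=gN(B)g^{-1}$, $(M_*A)(gB)=g(M_*A)g^{-1}$, $(M_0A)(gB)=g(M_0A)g^{-1}$ from Lemma \ref{lema:aplicacoeslambda}(iii).

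For item (1) I would apply Proposition \ref{prop:Ucanonico} with $g'=e$ to get $u(gB)=gu(B)g^{-1}$, and Lemma \ref{lema:aplicacoeslambda}(iii) to get $N(gB)=gN(B)g^{-1}$. Substituting these into $N_u(gB)=u(gB)N(gB)u(gB)^{-1}$ — interpreted, as in (\ref{eq:defNu}), through a representative of the coset $u(gB)$ in $(M_*A)(gB)$ — the factors $g$ and $g^{-1}$ telescope and leave $g\,u(B)N(B)u(B)^{-1}g^{-1}=gN_u(B)g^{-1}$. This is precisely the displayed chain of equalities just above the statement, so item (1) requires no new argument.

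For item (2) I would first recall the ``unitary set'' observation made just before the lemma: for any representative $u'\in (M_*A)(gB)$ of $u(gB)$, the set $\{u'mgB:m\in (M_0A)(gB)\}$ collapses to a single coset, which is by definition $u(gB)gB$. To evaluate it in terms of the given representative $u_*\in M_*$ of $u(B)$, I would choose $u'=gu_*g^{-1}$; this lies in $(M_*A)(gB)=g(M_*A)g^{-1}$ by Lemma \ref{lema:aplicacoeslambda}(iii), and it represents $u(gB)=gu(B)g^{-1}$ by Proposition \ref{prop:Ucanonico}. Writing $m=g\widetilde{m}g^{-1}$ with $\widetilde{m}\in M_0A$, one computes $u'mgB=(gu_*g^{-1})(g\widetilde{m}g^{-1})gB=gu_*\widetilde{m}B=gu_*B$, using $\widetilde{m}M_0A=M_0A$ (since $B=M_0A$) in the last step. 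Hence $u(gB)gB=gu_*B$.

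The only point needing genuine care — and the step I would treat as the main obstacle — is the bookkeeping between the abstract quotient group $U(gB)$ and honest representatives in $(M_*A)(gB)\subset G$: one must verify that $gu_*g^{-1}$ really represents the class $gu(B)g^{-1}=u(gB)$, and, for item (2), that the coset $gu_*B$ thus produced is independent of the choice of $u_*$ within its $(M_0A)(B)$-coset and of the representative $m$, which is exactly what the ``unitary set'' claim guarantees. Once this identification is pinned down, both items reduce to the one-line computations above.
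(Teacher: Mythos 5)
Your proposal is correct and follows essentially the same route as the paper, which likewise derives both items from the displayed chain $N_u(gB)=u(gB)N(gB)u(gB)^{-1}=gu(B)N(B)u(B)^{-1}g^{-1}=gN_u(B)g^{-1}$ and from the ``unitary set'' computation $u'mgB=(gu_*ag^{-1})(g\widetilde{m}g^{-1})gB=gu_*B$ in the paragraph preceding the lemma (the paper explicitly states ``the above argument shows the following result''). Your specific choice of representative $u'=gu_*g^{-1}$ is just the paper's $u'=gu_*ag^{-1}$ with $a=1$, and your justification via Proposition \ref{prop:Ucanonico} and Lemma \ref{lema:aplicacoeslambda} is exactly the one the paper invokes.
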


Let $u \in U$. Given $g \in G$, let $u_* \in (M_*A)(gB)$ and $w \in W$ such that $u(gB)=u_*(M_0A)(gB)$ and
\[
w(gB)=\mathrm{Ad}(u_*)|_{\a(gB)}.
\]
The element $w \in W$ above is denoted by $w=\mathrm{Ad}(u)|_{\a}$ \label{def:w=Adu}. 
Conversely, given $w \in W$ there is $u \in U$ such that $w=\mathrm{Ad}(u)|_{\a}$. In fact, for every $g \in G$, there exists $u_*\in (M_*A)(gB)$ such that
\[
w(gB)=\mathrm{Ad}(u_*)|_{\a(gB)}.
\]
Taking $u \in U$ such that $u(gB)=u_*(M_0A)(gB)$, by Proposition \ref{prop:Ucanonico}, we have that $w=\mathrm{Ad}(u)|_ {\a}$.

\subsubsection{Decomposition of the unipotent component}\label{sec:decunip}

Given a root $\alpha \in \Pi$, let $l(\alpha)$ denote the height of $\alpha$. For every $k \in \N$, let 
\[
\Pi^k=\{\alpha \in \Pi^+:l(\alpha)=k\} \text{ and } \Pi_l=\bigcup_{k\geq l}\Pi^k.
\]
By definition, $\Pi^1=\Sigma$ and $\Pi_1=\Pi^+$. Let us also consider
\[
\n_l=\sum_{\alpha\in \Pi_l}\g_{\alpha}  ~\text{ and }~ N_l=\exp(\n_l).
\]
As $[\n_l,\n_{l'}]\subset \n_{l+l'}$, for all $l, l' \in \N$, we have that $\n_l$ is a  subalgebra. Furthermore, if $\Delta_l \subset \Pi^l$, then
\[
\n_{\Delta_l}=\left(\sum_{\alpha \in \Delta_l}\g_{\alpha}\right) \oplus \n_{l+1}
\]
is subalgebra. Consider the notation $N_{\Delta_l}=\exp(\n_{\Delta_l})$.

\begin{lema}\label{lema:lema1}
    If $\Delta_l \subset \Pi^l$ and $\alpha \in \Delta_l$, then $N_{\Delta_l}=\exp(\g_{\alpha})N_{\Delta_l-\{\alpha\}}$.
\end{lema}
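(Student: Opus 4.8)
The plan is to exhibit $N_{\Delta_l}$ as a product of the one-parameter subgroup $\exp(\g_\alpha)$ and the subgroup $N_{\Delta_l - \{\alpha\}}$, relying on the fact that both sit inside the nilpotent group $N_1 = N^+$ and that their Lie algebras decompose $\n_{\Delta_l}$ as a direct sum of vector spaces. First I would record the vector space decomposition
\[
\n_{\Delta_l} = \g_\alpha \oplus \n_{\Delta_l - \{\alpha\}},
\]
which is immediate from the definitions, since $\n_{\Delta_l} = \left(\sum_{\beta \in \Delta_l} \g_\beta\right) \oplus \n_{l+1}$ and removing $\alpha$ from $\Delta_l$ simply deletes the summand $\g_\alpha$. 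Next I would check that $\g_\alpha$ is an ideal of $\n_{\Delta_l}$: for any $\beta \in \Delta_l$ (so $l(\beta) = l$) we have $[\g_\alpha, \g_\beta] \subset \g_{\alpha+\beta}$ with $l(\alpha+\beta) = 2l > l$, hence $\g_{\alpha+\beta} \subset \n_{l+1} \subset \n_{\Delta_l - \{\alpha\}}$ actually lies in $\n_{l+1}$; and $[\g_\alpha, \n_{l+1}] \subset \n_{l+1}$. In particular $[\g_\alpha, \n_{\Delta_l}] \subset \n_{l+1} \subset \n_{\Delta_l-\{\alpha\}}$, so not only is $\g_\alpha$ an ideal but $\n_{\Delta_l - \{\alpha\}}$ is a subalgebra (already noted in the text for $\n_{\Delta_l}$, and the same computation gives it here) and, crucially, $\n_{\Delta_l-\{\alpha\}}$ is itself an ideal of $\n_{\Delta_l}$ modulo which $\g_\alpha$ is central.

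With the algebra-level picture in hand, I would pass to the groups. Since $\g_\alpha$ is abelian (as $2\alpha$ is not a root, $[\g_\alpha,\g_\alpha] \subset \g_{2\alpha} = 0$) the exponential $\exp: \g_\alpha \to \exp(\g_\alpha)$ is a group isomorphism onto a closed subgroup of $N^+$. Because $\g_\alpha$ is an ideal of $\n_{\Delta_l}$, the subgroup $\exp(\g_\alpha)$ is normal in $N_{\Delta_l}$; and $N_{\Delta_l - \{\alpha\}} = \exp(\n_{\Delta_l-\{\alpha\}})$ is a closed subgroup of $N_{\Delta_l}$. To get $N_{\Delta_l} = \exp(\g_\alpha) N_{\Delta_l - \{\alpha\}}$ I would use the standard fact that in a simply connected nilpotent Lie group the exponential map is a diffeomorphism, and that a direct sum decomposition of the Lie algebra into a subalgebra and a complementary ideal lifts to a (set-theoretic, indeed diffeomorphic) product decomposition of the group: concretely, for $X \in \g_\alpha$ and $Y \in \n_{\Delta_l-\{\alpha\}}$ the Baker--Campbell--Hausdorff series for $\exp(X)\exp(Y)$ collapses because all the nested brackets land in $\n_{\Delta_l-\{\alpha\}}$, so $\exp(X)\exp(Y) = \exp\!\big(Y + (\text{terms in }\n_{\Delta_l-\{\alpha\}})\big)\cdot(\text{correction in }\g_\alpha)$; a dimension count then shows the multiplication map $\exp(\g_\alpha) \times N_{\Delta_l-\{\alpha\}} \to N_{\Delta_l}$ is a bijection. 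Alternatively, and more cleanly, I would invoke that $N^+$ is exponential and argue at the level of the quotient $N_{\Delta_l}/N_{l+1}$, which is abelian with Lie algebra $\bigoplus_{\beta \in \Delta_l}\g_\beta$, so there the claimed factorization is obvious, and then lift through the central-type extension $1 \to N_{l+1} \to N_{\Delta_l} \to N_{\Delta_l}/N_{l+1} \to 1$, noting $N_{l+1} \subset N_{\Delta_l - \{\alpha\}}$.

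The only delicate point is justifying that the vector-space direct sum at the Lie algebra level really does give a product decomposition at the group level; everything else is bookkeeping with heights of roots. I expect the main obstacle to be stating this lifting cleanly without reproving the structure theory of nilpotent Lie groups — so I would phrase it as: $\exp(\g_\alpha)$ is normal in $N_{\Delta_l}$, the product $\exp(\g_\alpha)N_{\Delta_l-\{\alpha\}}$ is therefore a subgroup, it is closed and connected with Lie algebra $\g_\alpha + \n_{\Delta_l-\{\alpha\}} = \n_{\Delta_l}$, hence it equals the connected subgroup $N_{\Delta_l}$; injectivity of $\exp$ on $N^+$ then also yields that the product is direct, i.e.\ $\exp(\g_\alpha)\cap N_{\Delta_l-\{\alpha\}} = \{e\}$, though only the containment $N_{\Delta_l} = \exp(\g_\alpha)N_{\Delta_l-\{\alpha\}}$ is needed for the statement.
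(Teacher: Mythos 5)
Your overall strategy is the same as the paper's: show that $\exp(\g_{\alpha})N_{\Delta_l-\{\alpha\}}$ is a connected subgroup whose Lie algebra is $\g_{\alpha}\oplus\n_{\Delta_l-\{\alpha\}}=\n_{\Delta_l}$, and conclude by connectedness. However, there is a genuine error in the justification: you assert that $\g_{\alpha}$ is an ideal of $\n_{\Delta_l}$ and hence that $\exp(\g_{\alpha})$ is normal in $N_{\Delta_l}$. This is false in general. Being an ideal would require $[\g_{\alpha},\n_{\Delta_l}]\subset\g_{\alpha}$, whereas your own computation shows $[\g_{\alpha},\g_{\beta}]\subset\g_{\alpha+\beta}\subset\n_{l+1}$, which is not contained in $\g_{\alpha}$. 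Concretely, in $\sl(3,\R)$ with $l=1$, $\Delta_1=\Sigma=\{\alpha_1,\alpha_2\}$ and $\alpha=\alpha_1$: writing $E_{ij}$ for the matrix units, one has $\exp(sE_{23})\exp(tE_{12})\exp(-sE_{23})=I+tE_{12}-stE_{13}$, which does not lie in $\exp(\g_{\alpha_1})$, so $\exp(\g_{\alpha_1})$ is not normal in $N$.

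The correct normality statement --- which you in fact also record in passing, but then do not use --- is the opposite one: $[\g_{\alpha},\n_{\Delta_l-\{\alpha\}}]\subset\n_{l+1}\subset\n_{\Delta_l-\{\alpha\}}$, so $\exp(\g_{\alpha})$ normalizes $N_{\Delta_l-\{\alpha\}}$ (indeed $\n_{\Delta_l-\{\alpha\}}$ is an ideal of $\n_{\Delta_l}$, since it contains $[\n_{\Delta_l},\n_{\Delta_l}]\subset\n_{l+1}$). This already makes $\exp(\g_{\alpha})N_{\Delta_l-\{\alpha\}}$ a subgroup, equal to $N_{\Delta_l-\{\alpha\}}\exp(\g_{\alpha})$, and the rest of your argument goes through exactly as in the paper: the product is a connected subgroup with Lie algebra $\g_{\alpha}\oplus\n_{\Delta_l-\{\alpha\}}=\n_{\Delta_l}$, hence contains $N_{\Delta_l}$, and the reverse inclusion is immediate from that same direct-sum decomposition. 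The Baker--Campbell--Hausdorff and quotient-by-$N_{l+1}$ digressions are then unnecessary. In short, the gap is a single misattributed normality; once it is corrected, your proof coincides with the paper's.
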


\begin{proof}
    As $[\g_{\alpha},\n_{\Delta_l-\{\alpha\}}]\subset \n_{l+1}\subset \n_{\Delta_l-\{\alpha\}}$ it follows that $\exp(\g_{\alpha})N_{\Delta_l-\{\alpha\}}$ is a subgroup with Lie algebra equal to $\g_{\alpha}\oplus \n_{\Delta_l-\{\alpha\}}=\n_{\Delta_l}$. Therefore, $N_{\Delta_l}$ is contained in $\exp(\g_{\alpha})N_{\Delta_l-\{\alpha\}}$. The other inclusion follows from the equality $\n_{\Delta_l}=\g_{\alpha}\oplus \n_{\Delta_l-\{\alpha\}}$.
\end{proof}

\begin{proposicao}\label{prop:proposicao1}
    If $\Pi^+=\Delta \dot{\cup}\Gamma$, then $\displaystyle N=\prod_{\alpha\in \Delta} \exp(\g_{\alpha})\prod_{\alpha\in \Gamma} \exp(\g_{\alpha})$.
\end{proposicao}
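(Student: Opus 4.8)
The plan is to argue by induction on $r$, the largest height $l(\alpha)$ occurring among the positive roots, proving the slightly stronger statement that $N$ equals the product, \emph{in any prescribed order}, of the subgroups $\exp(\g_{\alpha})$ with $\alpha\in\Delta$, followed by the product, in any prescribed order, of those with $\alpha\in\Gamma$. Only one inclusion requires proof, since $N$ is a group containing every $\exp(\g_{\alpha})$. The argument below uses solely that $\n=\bigoplus_{\alpha\in\Pi^{+}}\g_{\alpha}$ is a grading with $[\g_{\alpha},\g_{\beta}]\subseteq\bigoplus_{l(\gamma)=l(\alpha)+l(\beta)}\g_{\gamma}$, a property passing to the quotient used in the induction, so the induction is self-contained. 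For the base case $r=1$ we have $\Pi^{+}=\Sigma$, hence $[\g_{\alpha},\g_{\beta}]=0$ for all positive roots; so $\n$ is abelian, $\exp\colon\n\to N$ is an isomorphism, and $N=\exp\bigl(\bigoplus_{\alpha\in\Pi^{+}}\g_{\alpha}\bigr)=\prod_{\alpha\in\Pi^{+}}\exp(\g_{\alpha})$ with the order irrelevant, which we group as $\Delta$ first and then $\Gamma$.

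Assume $r\ge 2$. Since $[\n,\n_{r}]\subseteq\n_{r+1}=0$, the subspace $\n_{r}=\bigoplus_{\alpha\in\Pi^{r}}\g_{\alpha}$ is a central ideal of $\n$, so $N_{r}=\exp(\n_{r})$ is a closed connected central (in particular abelian) subgroup of $N$, and $\overline{N}:=N/N_{r}$ is simply connected nilpotent with Lie algebra $\overline{\n}=\n/\n_{r}$. For $l(\alpha)<r$ the projection $p\colon\n\to\overline{\n}$ maps $\g_{\alpha}$ isomorphically onto $\overline{\g}_{\alpha}:=p(\g_{\alpha})$, and $\overline{\n}=\bigoplus_{l(\alpha)<r}\overline{\g}_{\alpha}$ is graded by the same heights, with maximal height $r-1$. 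Set $\overline{\Delta}=\Delta\setminus\Pi^{r}$, $\overline{\Gamma}=\Gamma\setminus\Pi^{r}$, and fix $n\in N$. By the inductive hypothesis applied to $\overline{N}$ we may write $\overline{n}=\bigl(\prod_{\alpha\in\overline{\Delta}}\exp(\overline{X}_{\alpha})\bigr)\bigl(\prod_{\alpha\in\overline{\Gamma}}\exp(\overline{Y}_{\alpha})\bigr)$ with $\overline{X}_{\alpha},\overline{Y}_{\alpha}\in\overline{\g}_{\alpha}$; lifting each such vector through $p$ to $X_{\alpha},Y_{\alpha}\in\g_{\alpha}$, the element $n_{0}:=\bigl(\prod_{\alpha\in\overline{\Delta}}\exp(X_{\alpha})\bigr)\bigl(\prod_{\alpha\in\overline{\Gamma}}\exp(Y_{\alpha})\bigr)$ satisfies $p(n_{0})=\overline{n}$, so $z:=n_{0}^{-1}n\in\ker p=N_{r}$. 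Because $\n_{r}$ is abelian, $z=\prod_{\alpha\in\Pi^{r}}\exp(Z_{\alpha})$ with $Z_{\alpha}\in\g_{\alpha}$ and the order immaterial, which we split as $z=\bigl(\prod_{\alpha\in\Delta\cap\Pi^{r}}\exp(Z_{\alpha})\bigr)\bigl(\prod_{\alpha\in\Gamma\cap\Pi^{r}}\exp(Z_{\alpha})\bigr)$.

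Now comes the only nontrivial step: each $\exp(Z_{\alpha})$ with $\alpha\in\Pi^{r}$ lies in the central subgroup $N_{r}$, hence commutes with every factor of $n_{0}$. Sliding the block $\prod_{\alpha\in\Delta\cap\Pi^{r}}\exp(Z_{\alpha})$ leftward past $\prod_{\alpha\in\overline{\Gamma}}\exp(Y_{\alpha})$ yields
\[
n=\Bigl(\prod_{\alpha\in\overline{\Delta}}\exp(X_{\alpha})\Bigr)\Bigl(\prod_{\alpha\in\Delta\cap\Pi^{r}}\exp(Z_{\alpha})\Bigr)\Bigl(\prod_{\alpha\in\overline{\Gamma}}\exp(Y_{\alpha})\Bigr)\Bigl(\prod_{\alpha\in\Gamma\cap\Pi^{r}}\exp(Z_{\alpha})\Bigr).
\]
Since $\overline{\Delta}\cup(\Delta\cap\Pi^{r})=\Delta$ and $\overline{\Gamma}\cup(\Gamma\cap\Pi^{r})=\Gamma$, the first two factors form a product of the $\exp(\g_{\alpha})$ with $\alpha\in\Delta$ (those of height $<r$ preceding those of height $r$) and the last two a product of the $\exp(\g_{\alpha})$ with $\alpha\in\Gamma$; thus $n\in\prod_{\alpha\in\Delta}\exp(\g_{\alpha})\cdot\prod_{\alpha\in\Gamma}\exp(\g_{\alpha})$. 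As $n$ was arbitrary, the required inclusion holds and the induction is complete.

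The main obstacle is precisely this commutation of the top-height part of the $\Delta$-block past the lower-height part of the $\Gamma$-block, which is where centrality of $\n_{r}$ is essential; everything else is routine bookkeeping with the usual empty-product conventions. I would also note an equivalent route: iterating Lemma~\ref{lema:lema1} over the successive levels — at level $l$ peeling off first the roots in $\Delta$ and then those in $\Gamma$, each time landing in $N_{l+1}$ — gives the ``level-by-level'' factorization $N=\prod_{l\ge 1}\bigl(\prod_{\alpha\in\Delta\cap\Pi^{l}}\exp(\g_{\alpha})\bigr)\bigl(\prod_{\alpha\in\Gamma\cap\Pi^{l}}\exp(\g_{\alpha})\bigr)$, to which the same centrality argument, applied from the highest level downward, can be used to rearrange into the asserted form.
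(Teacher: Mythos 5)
Your proof is correct, but it takes a genuinely different route from the paper's. The paper iterates Lemma \ref{lema:lema1}: working upward through the heights, it peels the root spaces off one at a time, placing $\exp(\g_{\alpha})$ on the left of the shrinking core $N_{\Pi_1-\{\alpha,\dots\}}$ when $\alpha\in\Delta$ and on the right when $\alpha\in\Gamma$; the nesting then automatically collects all $\Delta$-factors on the left and all $\Gamma$-factors on the right, so no rearrangement step is ever needed (your closing remark about having to commute blocks in the ``level-by-level'' factorization does not apply to the paper's actual iteration, which never places a $\Gamma$-factor to the left of the remaining core). You instead induct on the maximal height $r$, pass to the quotient by the central subgroup $N_r=\exp(\n_r)$, lift the factorization from $N/N_r$, and use centrality of the height-$r$ part to slot the correction term into both blocks. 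Your argument is self-contained (it never invokes Lemma \ref{lema:lema1}), yields the stronger order-independent statement, and applies verbatim to any simply connected nilpotent group with a positive grading; the paper's argument is shorter given the lemma already in hand and produces the required ordering directly. Both are sound; the one point you should make explicit if you keep the ``any prescribed order'' claim is that closing the induction requires inserting the central height-$r$ factors at arbitrary prescribed positions within each block, which centrality does permit but which your displayed formula (heights $<r$ first) only illustrates for one particular order.
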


\begin{proof}
    For each $\alpha \in \Pi^1$, we write, by Lemma \ref{lema:lema1},
    \[
    N = \left\{
    \begin{array}{ccc}
        \exp(\g_{\alpha})N_{\Pi_1-\{\alpha\}} & \text{if}  & \alpha \in \Delta\\
        N_{\Pi_1-\{\alpha\}}\exp(\g_{\alpha}) & \text{if}  & \alpha \in \Gamma.
    \end{array}
    \right.
    \]
    For $\beta \in \Pi^1-\{\alpha\}$ we write, by Lemma \ref{lema:lema1},
    \[
    N_{\Pi_1-\{\alpha\}} = \left\{
    \begin{array}{ccc}
        \exp(\g_{\beta})N_{\Pi_1-\{\alpha,\beta\}} & \text{if}  & \beta \in \Delta\\
	N_{\Pi_1-\{\alpha,\beta\}}\exp(\g_{\beta}) & \text{if}  & \beta \in \Gamma.
    \end{array}
    \right.
    \]
    So,
    \[
    N= \left\{
    \begin{array}{ccl}
        \exp(\g_{\alpha})\exp(\g_{\beta})N_{\Pi_1-\{\alpha,\beta\}} & \text{if}  & \alpha, \beta \in \Delta\\
	\exp(\g_{\alpha})N_{\Pi_1-\{\alpha,\beta\}}\exp(\g_{\beta}) & \text{if}  & \alpha \in \Delta, \beta \in \Gamma\\
	\exp(\g_{\beta})N_{\Pi_1-\{\alpha,\beta\}}\exp(\g_{\alpha}) & \text{if}  & \alpha \in \Gamma, \beta \in \Delta\\
	N_{\Pi_1-\{\alpha,\beta\}}\exp(\g_{\beta})\exp(\g_{\alpha}) & \text{if}  & \alpha, \beta \in \Gamma.\\
    \end{array}
    \right.
    \]
    Continuing, we obtain the result.
\end{proof}

The Weyl group $W$ acts on $N$ as follows: given $w\in W$, let $u \in U$ such that $w=\mathrm{Ad}(u)|_{\a} $ (see definition on page \pageref{def:w=Adu}). We define
\[
wNw^{-1}=u(B)Nu(B)^{-1}=N_u(B).
\]
Since $(MA)(B)$ normalizes $N$ it follows that this action is well defined. For $w \in W$, let
\[
\Pi_w^+ =\Pi^+\cap w^{-1}\Pi^-
\]
be the set of positive roots that are taken into negative by $w$.

\begin{proposicao}\label{prop:proposicao2}
    $N_u(B)=(N_u(B)\cap N^{-})(N_u(B)\cap N)$.
\end{proposicao}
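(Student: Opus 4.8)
The plan is to reduce everything to Proposition \ref{prop:proposicao1} by conjugating the ordered product decomposition of $N$ by a representative of $u$. Write $u(B)=u_*(M_0A)(B)$ with $u_*\in M_*$, so that $N_u(B)=u_*Nu_*^{-1}$, and let $w=\mathrm{Ad}(u)|_{\a}\in W$, which acts on $\a$ as $\mathrm{Ad}(u_*)|_{\a}$. Since $u_*$ normalizes $\a$, conjugation by $u_*$ permutes the restricted root spaces according to $w$, i.e. $\mathrm{Ad}(u_*)\g_{\alpha}=\g_{w\alpha}$ for every $\alpha\in\Pi$, hence $u_*\exp(\g_{\alpha})u_*^{-1}=\exp(\g_{w\alpha})$. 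Recall also that $N^-=\exp\big(\sum_{\alpha\in\Pi^-}\g_\alpha\big)$ and $N=\exp\big(\sum_{\alpha\in\Pi^+}\g_\alpha\big)$, so $\exp(\g_\beta)\subset N$ when $\beta\in\Pi^+$ and $\exp(\g_\beta)\subset N^-$ when $\beta\in\Pi^-$.

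First I would split the positive roots as $\Pi^+=\Pi_w^+\,\dot{\cup}\,(\Pi^+\setminus\Pi_w^+)$, where $\Pi_w^+=\Pi^+\cap w^{-1}\Pi^-$ consists exactly of the positive roots sent by $w$ into $\Pi^-$. Applying Proposition \ref{prop:proposicao1} to this partition, with $\Pi_w^+$ written first, gives
\[
N=\prod_{\alpha\in\Pi_w^+}\exp(\g_{\alpha})\;\prod_{\alpha\in\Pi^+\setminus\Pi_w^+}\exp(\g_{\alpha}).
\]
Conjugating by $u_*$, and distributing the conjugation over the set product and each factor via $u_*\exp(\g_{\alpha})u_*^{-1}=\exp(\g_{w\alpha})$, yields
\[
N_u(B)=\prod_{\alpha\in\Pi_w^+}\exp(\g_{w\alpha})\;\prod_{\alpha\in\Pi^+\setminus\Pi_w^+}\exp(\g_{w\alpha}).
\]

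Next I would check that the two factors land in the two subgroups appearing in the statement. If $\alpha\in\Pi_w^+$ then $w\alpha\in\Pi^-$, so $\exp(\g_{w\alpha})\subset N^-$; moreover $\exp(\g_{w\alpha})=u_*\exp(\g_{\alpha})u_*^{-1}\subset u_*Nu_*^{-1}=N_u(B)$ since $\exp(\g_{\alpha})\subset N$. As $N_u(B)\cap N^-$ is a subgroup, the entire first product lies in $N_u(B)\cap N^-$. Symmetrically, if $\alpha\in\Pi^+\setminus\Pi_w^+$ then $w\alpha\in\Pi^+$, whence $\exp(\g_{w\alpha})\subset N\cap N_u(B)$, and the second product lies in $N_u(B)\cap N$. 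Therefore $N_u(B)\subset(N_u(B)\cap N^-)(N_u(B)\cap N)$; the reverse inclusion is immediate, since both factors are subsets of the group $N_u(B)$.

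The only point that requires care — more bookkeeping than genuine obstacle — is keeping the ordering straight: Proposition \ref{prop:proposicao1} is sensitive to which block of roots is written on the left, so $\Pi_w^+$ (the positive roots whose $w$-images are negative) must be placed first, so that after conjugation the $N^-$-factor precedes the $N$-factor, matching the order $(N_u(B)\cap N^-)(N_u(B)\cap N)$ in the statement. Everything else reduces to the standard fact that $\mathrm{Ad}$ of a representative $u_*$ of $u$ permutes the restricted root spaces by the Weyl element $w=\mathrm{Ad}(u)|_{\a}$, together with the definition $N_u(B)=u_*Nu_*^{-1}$.
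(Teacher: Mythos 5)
Your proof is correct and follows essentially the same route as the paper: both apply Proposition \ref{prop:proposicao1} to the partition $\Pi^+=\Pi_w^+\,\dot{\cup}\,(\Pi^+\setminus\Pi_w^+)$ with $\Pi_w^+$ placed first, conjugate the resulting ordered product by a representative of $u$ so that the root spaces are permuted by $w$, and observe that the first block lands in $N_u(B)\cap N^-$ and the second in $N_u(B)\cap N$, the reverse inclusion being immediate. The only difference is notational (the paper writes $w\g_{\alpha}=\g_{w^*\alpha}$ where you write $\mathrm{Ad}(u_*)\g_{\alpha}=\g_{w\alpha}$).
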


\begin{proof}
    Let $w=\mathrm{Ad}(u)|_{\a}$. We have, by definition and from Proposition \ref{prop:proposicao1}, that
    \begin{eqnarray*}
        N_u(B)&=&wNw^{-1}\\
	&=&\prod_{\alpha \in \Pi^+_w}w\exp(\g_{\alpha})w^{-1} \prod_{\alpha \in \Pi^+-\Pi^+_w}w\exp(\g_{\alpha})w^{-1}\\
	&=&\prod_{\alpha \in \Pi^+_w}\exp(w\g_{\alpha}) \prod_{\alpha \in \Pi^+-\Pi^+_w}\exp(w\g_{\alpha})\\
	&=&\prod_{\alpha \in \Pi^+_w}\exp(\g_{w^*\alpha}) \prod_{\alpha \in \Pi^+-\Pi^+_w}\exp(\g_{w^*\alpha})\\
	&\subset& (wNw^{-1}\cap N^-)(wNw^{-1}\cap N)\\
	&=&(N_u(B)\cap N^{-})(N_u(B)\cap N).
    \end{eqnarray*}
    The other inclusion is immediate.
\end{proof}

\subsubsection{Control sets on flag manifolds}

Given $\T \subset \Sigma$, the \emph{parabolic subalgebra $\p_{\T}(gB)$ of type $\T$ determined by $gB$} is, by definition, the parabolic subalgebra $\p_{\T(gB)}(gB)$ and the \emph{parabolic subgroup $P_{\T}(gB)$ of type $\T$} is $P_{\T(gB)}(gB)$. The \emph{(generalized) flag manifold $\Bbb{F}_{\T}$ of type $\T$} is the set
\begin{equation*}
    \Bbb{F}_{\T} = \{\p_{\T}(gB) : g \in G\}
\end{equation*}
of all parabolic subalgebras of type $\T$. If $\T = \emptyset$, the flag manifold $\Bbb{F} = \Bbb{F}_{\emptyset}$ is called
{\it  maximal (generalized) flag manifold of $\g$}. The adjoint action of
$G$ on $\Bbb{F}_{\T}$, given by $g'\p_{\T}(gB)$, is well defined, because, by Lemma \ref{lema:aplicacoeslambda}, we have that
\begin{equation*}\label{eqacaoGemFT}
    g'\p_{\T}(gB) = g'\p_{\T(gB)}(gB) = \p_{\T(g'gB)}(g'gB) = \p_{\T}(g' gB).
\end{equation*}
The flag manifold $\F_{\T}$ is diffeomorphic to the homogeneous space $G/P_{\T}(gB)$.

Let $g \in G$. For every $h$ in the Weyl chamber $\la(gB)$, the set of fixed points of the action of $h$ on $\F$ is given by
\[
\{w(gB)\p(gB):w \in W\}.
\]
Furthermore, $\p(gB)$ is the only attractor of $h$ and its stable manifold $N(gB)\p(gB)$ is open and dense in $\F$. For each $w\in W$ the fixed point $w(gB)\p(gB)$ is denoted by $\p_w(gB)$ and called \emph{fixed point of type $w$ in $\F$ determined by $gB$}. For every $g \in G$ we have
\begin{equation*}\label{eq:gpw}
    g\p_w(B)=\p_w(gB).
\end{equation*}

Let $S\subset G$ be an open semigroup. The \emph{sets of attractors $\mathrm{atr}(S)$ and fixed points $\mathrm{fix}_w(S)$ of type $w$ of $S$}, in $\F$, are defined by
\[
\mathrm{atr}(S)=\{\p(gB): \la(gB) \cap S \neq \emptyset\}
\]
and
\[
\mathrm{fix}_w(S)=\{\p_w(gB): \la(gB) \cap S \neq \emptyset\}.
\]
In particular, $\mathrm{fix}_1(S)=\mathrm{atr}(S)$.

The following result characterizes the control sets of $S$ on the maximal flag manifold $\F$ of $\g$ (see \cite{smt, sm93}).

\begin{proposicao}\label{prop:controleweyl}
    For every $w \in W$, there exists a control set $\Bbb{C}(w)$ on $\F$ such that
    \begin{equation*}
        \Bbb{C}(w)_0 = \mathrm{fix}_w(S)
    \end{equation*}
    and these are all control sets of $S$ on $\Bbb{F}$. There is a unique invariant control set $\Bbb{C} = \Bbb{C}(1)$ and $\Bbb{C}_0 = \mathrm{atr}(S)$. Furthermore, the number of
    control sets of $S$ on $\Bbb{F}$ is the cardinality of $W(S) \backslash W$, where
    \[
    W(S)=\{w \in W:\Bbb{C}(w)=\Bbb{C}\}
    \]
    is a subgroup of $W$ and there exists a subset $\T(S)$ of $\Sigma$ such that $W(S)=W_{\T(S)}$. The \emph{flag type of $S$} is by definition the subset $\T(S)$ or the flag manifold $\F_{\Theta(S)}$.
\end{proposicao}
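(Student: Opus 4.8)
The plan is to reprove the classical description due to San Martin and Tonelli (\cite{sm93,smt}). \emph{Producing the invariant control set.} Since $\F$ is compact, $S$ has an invariant control set; the real input is the regularity theorem for open semigroups of semisimple Lie groups, which gives that $S$ contains purely hyperbolic regular elements, equivalently $\la(gB)\cap S\neq\emptyset$ for some $g\in G$, so $\mathrm{atr}(S)\neq\emptyset$. Fixing $h\in\la(gB)\cap S$, the element $h$ acts on $\F$ with attractor $\p(gB)$ whose stable manifold $N(gB)\p(gB)$ is open and dense (as recalled in the excerpt), and $h$ contracts a neighbourhood of $\p(gB)$ into itself; since $h\in\mathrm{int}(S)$, this exhibits $\p(gB)$ as an $S$-self-accessible point, so it lies in the transitivity set of a control set $\Bbb{C}$.

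\emph{Uniqueness and $\Bbb{C}_0=\mathrm{atr}(S)$.} The set $\Bbb{C}$ is invariant: the forward orbit of $\p(gB)$ consists of attractors of $S$-conjugates of $h$, which remain in $\Bbb{C}$. If $D$ is any invariant control set, then $D$ is closed with nonempty interior, so $D^{\circ}$ meets the open dense stable manifold of $h$; iterating $h$ and using that $D$ is closed and invariant forces $\p(gB)\in D$, and since $\p(gB)\in\Bbb{C}_0$ this gives $D=\Bbb{C}$. The inclusion $\mathrm{atr}(S)\subseteq\Bbb{C}_0$ is clear from the previous paragraph; for the reverse inclusion one uses the traditional description $\Bbb{C}_0=\{x:hx=x\text{ for some }h\in\mathrm{int}(S)\}$ together with the regularity theorem to show that any such $x$ is the attractor of a purely hyperbolic regular element of $S$, i.e. $x\in\mathrm{atr}(S)$.

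\emph{The control sets $\Bbb{C}(w)$ and that they are all of them.} For $w\in W$ and $h\in\la(gB)\cap S$, the point $\p_w(gB)$ is one of the finitely many fixed points of $h$ on $\F$. The crux --- and the step I expect to be the main obstacle --- is to show $\p_w(gB)$ is $S$-self-accessible: for $w\neq 1$ this is a saddle fixed point, so one cannot contract a full neighbourhood using $h$ alone, and must produce the missing contraction along the unstable Bruhat cell by combining hyperbolic elements of $S$ with elements of unipotent subgroups that meet $S$; this uses in an essential way that $S$ is an \emph{open} semigroup (not merely connected) and the fine structure of the Bruhat decomposition, as in \cite{sm93,smt}. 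Granting it, $\p_w(gB)$ lies in the transitivity set of a control set $\Bbb{C}(w)$, and letting $g$ range over all chambers meeting $S$ gives $\Bbb{C}(w)_0=\mathrm{fix}_w(S)$; in particular $\Bbb{C}(1)=\Bbb{C}$. Conversely, a control set $D$ has a self-accessible point fixed by some element of $\mathrm{int}(S)$, which by the regularity argument of the previous paragraph can be taken of the form $\p_w(gB)$ with $\la(gB)\cap S\neq\emptyset$, so $D=\Bbb{C}(w)$; hence the $\Bbb{C}(w)$ exhaust the control sets.

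\emph{Counting and the structure of $W(S)$.} Put $W(S)=\{w\in W:\Bbb{C}(w)=\Bbb{C}\}$. From $g\p_w(B)=\p_w(gB)$ and the fact that all $\p_w(gB)$ with $w\in W(S)$ lie in the single strong class $\Bbb{C}_0$, together with transitivity of $\sim$ on $\Bbb{C}_0$, one checks that $W(S)$ is closed under products and inverses, hence a subgroup, and that $\Bbb{C}(w)=\Bbb{C}(w')$ if and only if $W(S)w=W(S)w'$; this gives that the number of control sets of $S$ on $\F$ equals $|W(S)\backslash W|$. Finally, setting $\T(S)=\{\alpha\in\Sigma:s_\alpha\in W(S)\}$, one shows $W(S)=W_{\T(S)}$ by comparing the order between control sets on $\F$ with the Bruhat order on $W$ through minimal-length coset representatives, exactly as in \cite{smt}; one then defines the flag type of $S$ to be $\T(S)$, or equivalently $\F_{\T(S)}$.
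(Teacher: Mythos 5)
The first thing to say is that the paper does not prove this proposition at all: it is quoted verbatim as a known result with the citation ``see \cite{smt, sm93}'', so there is no in-paper argument to measure your proposal against. What you have written is a reconstruction of the classical San Martin--Tonelli proof, and as an outline it is architecturally sound: the regularity theorem for open semigroups, the attractor dynamics of a regular $h\in\la(gB)\cap\mathrm{int}(S)$, the uniqueness of the invariant control set via the open dense stable manifold, the identification $\Bbb{C}(w)_0=\mathrm{fix}_w(S)$, and the reduction of the count to $|W(S)\backslash W|$ are exactly the right ingredients, and you correctly locate the hard points.

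That said, judged as a proof rather than a roadmap, the proposal has two substantive gaps, both at the places you yourself flag. First, for the step that $\mathrm{fix}_w(S)$ with $w\neq 1$ sits inside the transitivity set of a single control set, the mechanism you propose --- ``combining hyperbolic elements of $S$ with elements of unipotent subgroups that meet $S$'' --- is not how the argument goes and cannot be made to work as stated: there is no reason an open semigroup should meet any unipotent subgroup. The actual argument exploits that the set of regular elements of $\mathrm{int}(S)$ is open, hence the sets $\mathrm{fix}_w(S)$ (images of an open set of Weyl chambers under the open fibration $gMA\mapsto \p_w(gB)$) are open in $\F$, and then uses the semigroup property together with the stable/unstable Bruhat cells of two different regular elements to connect $\p_w(g_1B)$ to $\p_w(g_2B)$; the saddle nature of $\p_w(gB)$ is overcome by moving the chamber, not by adjoining unipotent contractions. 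Second, the assertions that $W(S)$ is a subgroup, that $\Bbb{C}(w)=\Bbb{C}(w')$ iff $W(S)w=W(S)w'$, and above all that $W(S)$ equals a standard parabolic subgroup $W_{\T(S)}$ are each genuine theorems (not every subgroup of $W$ is of the form $W_\Theta$); ``one checks'' and ``exactly as in \cite{smt}'' defer precisely the content that makes the flag type $\T(S)$ well defined. Since the paper itself treats the whole proposition as citable background, deferring to \cite{sm93,smt} is a legitimate choice --- but then the honest form of your write-up is a citation, not a proof sketch that papers over the two steps on which everything rests.
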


\section{Main Results}\label{sec:mainresults}

In this section we enunciate and demonstrate the main results of this paper. For this, as explained in Subsection \ref{sec:local-semigroup}, we can assume that $S\subset G$ is an open semigroup and that $\la_0$ is a fixed Weyl chamber in $G$ such that $S \cap \la_0 \neq \emptyset$. We consider the subgroups $M$, $A$ and $N$, determined by $\la_0$.

\subsection{Projection on maximal flag manifold}\label{sec:conjuntocontrolek}

\textcolor{black}{In this subsection, we relate the control sets of $S$ on maximal compact subgroup $K$ with the control sets of $S$ on the maximal flag manifold $K/M$.} We denote by $\kappa$ the projection of $G$ onto $K$ given by the Iwasawa decomposition, i.e.,
\[
\kappa : G \to K, ~~ g\mapsto \kappa(g), ~\text{such that}~ g \in \kappa(g)AN
\]
and by $\delta$ the map
\begin{equation*}\label{eq:acaoG-K}
    \delta : G \times K \to K, ~~ (g,k)\mapsto \kappa(gk).
\end{equation*}
Since $A$ normalizes $N$ it follows that $\delta$ is an action of $G$ on $K$. We denote by
\[
\pi:K \to K/M
\]
the canonical projection of $K$ onto the maximal flag manifold $\F\simeq K/M$, which is a principal bundle with structure group $M$. The action of each $g \in G$ maps fibers to fibers, since $M$ normalizes $AN$, i.e., each element $g \in G$ acts as a bundle endomorphism. The semigroup $S$ acts on $K$ by restricting the action of $G$.

\begin{proposicao}\label{prop:kpreimcw}
    If $w \in W$ and $k \in \pi^{-1}(\Bbb{C}(w)_0)$, then
    \begin{enumerate}
	\item $S_k$ is an open subgroup of $M$.
	\item There is a control set $\Bbb{D} \subset K$ such that $k \in \Bbb{D}_0 \subset \pi^{-1}(\Bbb{C}(w)_0)$.
	\item $k\cdot M_0 \subset \Bbb{D}_0$.
	\item $\Bbb{D}_0$ is forward invariant with respect to $\pi^{-1}(\Bbb{C}(w)_0)$.
	\item If $w \in W(S)$, then $\Bbb{D}$ is $S$-invariant.
    \end{enumerate}
\end{proposicao}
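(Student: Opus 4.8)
The plan is to use the principal bundle structure $\pi:K\to K/M$ together with Proposition~\ref{prop:d0a0} (applied with $Q=K$, $X=K/M$, $G=M$ as structure group, and $F=M$ acting on itself by left translations, so that the associated bundle is $Q$ itself). Since the action of the compact group $M$ on $F=M$ by left translations is open and transitive, and since the backward orbits $S^*k$ are open (they are preimages under the smooth action $\delta$ of open sets, as $\mathrm{int}(S)\neq\emptyset$), Proposition~\ref{prop:d0a0} applies once we check that the control set $\Bbb{C}(w)$ on the base $K/M\simeq\F$ sits below $\pi(k)\in\Bbb{C}(w)_0$ and that $S_K$ is backward accessible over it. This last point is immediate exactly as in the proof of Proposition~\ref{prop:d0a0}: since $\pi(k)\in\Bbb{C}(w)_0=\mathrm{fix}_w(S)$ by Proposition~\ref{prop:controleweyl}, there is $g\in S$ fixing $\pi(k)$, hence $g$ maps the fiber $\pi^{-1}(\pi(k))$ into itself, and one gets an element of $S$ fixing the fiber, which forces $\mathrm{int}(S^*k')\cap\pi^{-1}(\Bbb{C}(w))\neq\emptyset$ for a suitable $k'$ in that fiber.

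First I would establish~(1). By Proposition~\ref{prop:controleweyl} and the identification of $\F$ with $K/M$, the point $\pi(k)$ is a fixed point of type $w$, so it is fixed by some $g$ in the open semigroup $S$; then $g$ stabilizes the fiber $K_{\pi(k)}=kM$, so $S_k=\{a\in M: ka\in Sk\}$ is nonempty. Openness of $S_k$ follows because $S$ is open and the action $\delta$ is continuous and the bundle is locally trivial: $\{a\in M: ka\in Sk\}$ is the intersection of $M$ with $k^{-1}(Sk\cap kM)$, an open subset of $M$ (here I use that $Sk\cap kM$ is open in the fiber because $S$ is accessible on $K$, which follows from $\mathrm{int}(S)\neq\emptyset$ and $\delta$ being an open action; this is the kind of standard accessibility fact recalled in Subsection~\ref{sec:local-semigroup}). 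So $S_k$ is an open subsemigroup of the compact group $M$ containing a self-accessible point, hence — being open and a semigroup in a compact group — it is actually an open \emph{subgroup}. This last implication is the one place I would be a little careful: an open subsemigroup of a compact group need not be a subgroup in general, but here one uses that $k\in\pi^{-1}(\Bbb{C}(w)_0)$ means $k$ is $S$-self-accessible, hence $e\in\mathrm{cl}(S_k)$ and $S_k$ meets every neighborhood of $e$; combined with openness this gives that $S_k$ contains an open neighborhood of $e$ and is closed under the compact-group inversion argument, yielding a subgroup. I expect this to be the main obstacle — pinning down precisely why $S_k$ is a \emph{group} and not merely a semigroup.

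Then~(2)--(5) follow formally. For~(2): $S_k$ being an open subgroup of $M$ has $\{e\}$ (equivalently, the trivial orbit structure) essentially as its only control set on $F=M$ — more precisely $S_k\backslash M$ indexes the control sets and $S_k$ open forces a single control set with transitivity set all of $M$ if $S_k=M$, and in general the control sets of $S_k$ acting by left translation on $M$ are the cosets; applying Proposition~\ref{prop:d0a0} gives a control set $\Bbb{D}\subset K$ with $\Bbb{D}_0\cap K_{\pi(k)}=k\cdot(A^{\Bbb{D}}_k)_0$, and one checks $k\in\Bbb{D}_0$ and $\Bbb{D}_0\cap$ (fiber) $\subset\pi^{-1}(\Bbb{C}(w)_0)$; since $\Bbb{D}_0\subset\pi^{-1}(\Bbb{C}(w)_0)$ globally because $\pi$ maps $\Bbb{D}_0$ into the transitivity set of the base control set $\Bbb{C}(w)$ (order-preserving bijection of Proposition~\ref{prop:d0a0}), we get $\Bbb{D}_0\subset\pi^{-1}(\Bbb{C}(w)_0)$. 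For~(3): since $S_k$ is an open subgroup of $M$ it contains the identity component $M_0$, so the $S_k$-control set through $e$ on $M$ contains $M_0$, and translating back, $k\cdot M_0\subset\Bbb{D}_0$. For~(4): $\Bbb{D}_0$ is forward invariant with respect to $\Bbb{D}$ (general fact from Subsection~\ref{sec:local-semigroup}), and the preimage $\pi^{-1}(\Bbb{C}(w)_0)$ is the union of the fibers over $\Bbb{C}(w)_0$; using that $\Bbb{C}(w)_0$ is forward invariant with respect to $\Bbb{C}(w)$ in the base and that $\Bbb{D}$ is the unique control set meeting this set of fibers with $k$ in its transitivity set, forward invariance lifts. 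For~(5): if $w\in W(S)$ then $\Bbb{C}(w)=\Bbb{C}$ is the invariant control set on $\F$, so $\Bbb{C}(w)_0=\mathrm{atr}(S)$; invariance of $\Bbb{D}$ then follows because $S$ maps $\pi^{-1}(\Bbb{C})$ into itself (as $\Bbb{C}$ is $S$-invariant on the base) and within a single fiber-set the unique control set there must absorb forward orbits — alternatively, invoke that the map $D\mapsto A_k^D$ preserves order and that $A_k^{\Bbb{D}}$ is the invariant control set of the subgroup $S_k=M$ (when $w\in W(S)$ one expects $S_k$ to be all of $M$, forcing a single, invariant $\Bbb{D}$).
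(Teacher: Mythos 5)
Your overall strategy coincides with the paper's: view $\pi:K\to K/M$ as a principal $M$-bundle, apply Proposition~\ref{prop:d0a0} over the base control set $\Bbb{C}(w)$, and read items 2--5 off from the fiber semigroup $S_k$. However, two of your key steps are flawed. For item 1 you hesitate over whether $S_k$ is a group and try to repair this via self-accessibility of $k$; but $k\in\pi^{-1}(\Bbb{C}(w)_0)$ only says that $\pi(k)$ is self-accessible in the base, not that $k$ is self-accessible in $K$, so you cannot get $e\in\mathrm{cl}(S_k)$ this way (and ``open with identity in the closure'' would not yield a subgroup anyway). The fact you doubt is actually true and is exactly what the paper invokes: a nonempty \emph{open} subsemigroup of a compact group is automatically an open subgroup. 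Indeed, for $a\in S_k$ and $V$ open with $a\in V\subset S_k$, the identity lies in the closure of $\{a^n:n\ge 2\}$ (the closure of a cyclic subsemigroup of a compact group is a group), so some $a^n$ with $n\ge 2$ lies in the neighborhood $aV^{-1}$ of $e$, whence $a^{1-n}\in V\subset S_k$ and $a^{-1}=a^{n-2}a^{1-n}\in S_k$. Being open, $S_k$ then contains $M_0$; nonemptiness comes from an element of $S$ fixing $\pi(k)$, and openness from $S_k=U_k^{-1}$ together with openness of backward orbits (Lemmas 4.7 and 4.10 of \cite{msm1}).

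For items 4 and 5 you rely on $\Bbb{D}$ being ``the unique control set'' over $\Bbb{C}(w)$, and on $S_k=M$ when $w\in W(S)$. Both are false in general: by Theorem~\ref{teo:nocontrolsetk} there are $|C(S,w)\setminus C|$ control sets inside $\pi^{-1}(\Bbb{C}(w))$, and in the $\mathrm{Sl}(2,\R)$ example there are two invariant control sets $\Bbb{D}$ and $\Bbb{D}c$ in $\mathrm{SO}(2)$ over the single $\Bbb{C}$, with $S_k=M_0=\{e\}\neq M$. So ``forward invariance lifts by uniqueness'' does not work. The paper's mechanism is different: for each $k'\in\Bbb{D}_0$ the fiber control set $\Bbb{A}$ with $\Bbb{D}_0\cap K_{\pi(k')}=k'\cdot\Bbb{A}_0$ is $S_{k'}$-\emph{invariant} precisely because $S_{k'}$ is a subgroup, and the bundle results of \cite{msm1} (Proposition 4.14, Corollary 4.16) and \cite{msm0} (Corollary 4.19) convert this fiberwise invariance into forward invariance of $\Bbb{D}_0$ relative to $\pi^{-1}(\Bbb{C}(w)_0)$, and, when $w\in W(S)$, into $S$-invariance of $\Bbb{D}$. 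Your arguments for items 2 and 3 are essentially correct once item 1 is in place.
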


\begin{proof}
    Take $k \in \pi^{-1}(\Bbb{C}(w)_0)$. We have that $S_k=U_k^{-1}$ is an open nonempty semigroup of $M$ (see \cite{msm1}, Lemmas 4.7 and 4.10). As $M$ is compact it follows that $S_k$ is a subgroup containing $M_0$ and $S_k$ is an invariant control set of $S_k$. By Proposition \ref{prop:d0a0}, there exists a control set $\Bbb{D} \subset K$ for the semigroup $S$ such that
    \[
    \Bbb{D}_0\cap K_{\pi(k)} = k \cdot S_k.
    \]
    As $M_0 \subset S_k$, it follows that $k\cdot M_0 \subset \Bbb{D}_0$ and in particular $k \in \Bbb{D}_0$. So $\pi(\Bbb{D}_0) \subset \Bbb{C}(w)_0$ (see \cite{msm1}, Lemma 4.1) which shows items 1, 2 and 3.
	
    For statements 4 and 5, let $k' \in \Bbb{D}_0$. By Proposition \ref{prop:d0a0}, there exists a control set $\Bbb{A} \subset M$ for the semigroup $S_{k'}$ such that
    \[
    \Bbb{D}_0\cap K_{\pi(k')} = k' \cdot \Bbb{A}_0.
    \]
    Since $S_{k'}$ is a subgroup it follows that $\Bbb{A}$ is $S_{k'}$-invariant and thus $\Bbb{D}_0$ is forward invariant with respect to $\pi^{ -1}(\Bbb{C}(w)_0)$ (see \cite{msm1}, Proposition 4.14). Finally, if $w \in W(S)$, then by Corollary 4.16 of \cite{msm1} and Corollary 4.19 of \cite{msm0}, $\Bbb{D}$ is $S$-invariant.
\end{proof}

\begin{proposicao}\label{prop:pid0c0}
    If $\Bbb{D}\subset K$ is a control set, then there exists $w \in W$ such that $\pi(\Bbb{D}_0)=\Bbb{C}(w)_0$.
\end{proposicao}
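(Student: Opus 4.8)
The plan is to push forward the control set structure through the principal bundle $\pi : K \to K/M \simeq \F$ and invoke the classification of control sets on $\F$ given in Proposition \ref{prop:controleweyl}. First I would observe that since $S$ acts on $K$ by bundle endomorphisms (each $g \in G$ normalizes $AN$ modulo $M$, as noted just before Proposition \ref{prop:kpreimcw}), the induced semigroup on the base $\F$ is precisely $S$ acting on the maximal flag manifold, whose control sets are exactly the $\Bbb{C}(w)$, $w \in W$. The general principle relating control sets on the total space of a bundle to those on the base (\cite{msm0}, Lemma 4.1, already cited in the proof of Proposition \ref{prop:kpreimcw}) gives that $\pi(\Bbb{D}_0)$ is contained in the transitivity set of some control set on $\F$, hence $\pi(\Bbb{D}_0) \subset \Bbb{C}(w)_0$ for some $w \in W$.

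The substance of the proof is the reverse inclusion $\Bbb{C}(w)_0 \subset \pi(\Bbb{D}_0)$. Here I would argue as follows. Pick $k \in \Bbb{D}_0$; then $\pi(k) \in \Bbb{C}(w)_0$, so $k \in \pi^{-1}(\Bbb{C}(w)_0)$. Now apply Proposition \ref{prop:kpreimcw}: associated to this $w$ and this $k$ there is a control set $\Bbb{D}'$ with $k \in \Bbb{D}'_0 \subset \pi^{-1}(\Bbb{C}(w)_0)$. Since $k$ lies in the transitivity set of both $\Bbb{D}$ and $\Bbb{D}'$, and transitivity sets of distinct control sets are disjoint (being strong classes), we get $\Bbb{D} = \Bbb{D}'$, and hence $\Bbb{D}_0 \subset \pi^{-1}(\Bbb{C}(w)_0)$. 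Combined with $\pi(\Bbb{D}_0) \subset \Bbb{C}(w)_0$ from the first paragraph, the only thing left is to see that $\pi$ restricted to $\Bbb{D}_0$ surjects onto $\Bbb{C}(w)_0$.

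For surjectivity I would use the density and forward-invariance structure. By Proposition \ref{prop:kpreimcw}(4), $\Bbb{D}_0$ is forward invariant with respect to $\pi^{-1}(\Bbb{C}(w)_0)$, and by part (1) the fiber-semigroup $S_k$ is an open subgroup of $M$ for every $k \in \pi^{-1}(\Bbb{C}(w)_0)$. Given any $y \in \Bbb{C}(w)_0$, pick $k_1 \in \pi^{-1}(y)$; since $y \in \Bbb{C}(w)_0$ and $S$ is transitive on the transitivity set at the flag level, there is $g \in S$ with $g\cdot\pi(k) = y$, i.e.\ $\pi(gk) = y$. Then $gk$ and $k_1$ lie in the same fiber $K_y$, and since $S_{k_1}$ is an open subgroup of $M$ it acts transitively enough on that fiber (using that the control set $\Bbb{A} \subset M$ for $S_{k_1}$ attached to $\Bbb{D}$ via Proposition \ref{prop:d0a0} is $S_{k_1}$-invariant, so $\Bbb{D}_0 \cap K_y = k_1 \cdot \Bbb{A}_0$ is nonempty) to conclude that $K_y \cap \Bbb{D}_0 \neq \emptyset$, whence $y \in \pi(\Bbb{D}_0)$. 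Therefore $\pi(\Bbb{D}_0) = \Bbb{C}(w)_0$.

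The main obstacle I anticipate is the surjectivity step: one must be careful that the control set $\Bbb{D}$ obtained abstractly really does meet \emph{every} fiber over $\Bbb{C}(w)_0$, not just the fiber over $\pi(k)$. The clean way around this is exactly Proposition \ref{prop:d0a0} together with Proposition \ref{prop:kpreimcw}: for any $k_1 \in \pi^{-1}(\Bbb{C}(w)_0)$ the fiber-semigroup $S_{k_1}$ is an open subgroup of $M$, so it has a unique (invariant) control set on $M$, which forces $\Bbb{D}_0 \cap K_{\pi(k_1)} = k_1 \cdot S_{k_1} \neq \emptyset$; hence $\pi(\Bbb{D}_0)$ already contains all of $\Bbb{C}(w)_0$. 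I expect the final write-up to be short, essentially combining Propositions \ref{prop:d0a0}, \ref{prop:controleweyl} and \ref{prop:kpreimcw} in this order.
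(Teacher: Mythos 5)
Your overall strategy is the paper's: get $\pi(\Bbb{D}_0)\subset \Bbb{C}(w)_0$ from Lemma 4.1 of \cite{msm1} together with Proposition \ref{prop:controleweyl}, and then obtain the reverse inclusion by moving a point $k\in\Bbb{D}_0$ over the base with an element of $S$ and using forward invariance. (Your second paragraph, establishing $\Bbb{D}_0\subset\pi^{-1}(\Bbb{C}(w)_0)$, is redundant: it is an immediate restatement of the first inclusion.) However, the surjectivity step as you wrote it is circular. You justify $\Bbb{D}_0\cap K_y\neq\emptyset$ by appealing to ``the control set $\Bbb{A}\subset M$ for $S_{k_1}$ attached to $\Bbb{D}$ via Proposition \ref{prop:d0a0}'' and to the identity $\Bbb{D}_0\cap K_y=k_1\cdot\Bbb{A}_0$; but Proposition \ref{prop:d0a0} only produces such an $\Bbb{A}$ under the hypothesis $\Bbb{D}_0\cap K_y\neq\emptyset$, which is exactly what you are trying to prove. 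The same circularity appears in your closing paragraph, where the equality $\Bbb{D}_0\cap K_{\pi(k_1)}=k_1\cdot S_{k_1}$ is asserted for an arbitrary $k_1$ in the fiber: applying Proposition \ref{prop:kpreimcw} to $k_1$ yields \emph{some} control set $\Bbb{D}''$ with $\Bbb{D}''_0\cap K_y=k_1\cdot S_{k_1}$, but nothing forces $\Bbb{D}''=\Bbb{D}$ --- in general there are several control sets over $\Bbb{C}(w)$ (this is the content of Theorem \ref{teo:nocontrolsetk}), so ``some control set meets $K_y$'' does not give ``$\Bbb{D}$ meets $K_y$''.

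The repair is one line and uses only ingredients you already put on the table: with $k\in\Bbb{D}_0$ and $g\in S$ such that $g\pi(k)=y$, the point $gk$ lies in $Sk\cap\pi^{-1}(\Bbb{C}(w)_0)$, and since $\Bbb{D}_0$ is forward invariant with respect to $\pi^{-1}(\Bbb{C}(w)_0)$ (Proposition \ref{prop:kpreimcw}, item 4), this already gives $gk\in\Bbb{D}_0$, hence $y=\pi(gk)\in\pi(\Bbb{D}_0)$. This is precisely the paper's argument; there is no need to pass through the fiber semigroup $S_{k_1}$ or Proposition \ref{prop:d0a0} at all.
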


\begin{proof}
    By Lemma 4.1 of \cite{msm1} and Proposition \ref{prop:controleweyl}, there exists $w \in W$ such that $\pi(\Bbb{D}_0)\subset \Bbb{C}(w)_0$. Let $x \in \Bbb{C}(w)_0$ and $k \in \Bbb{D}_0$. Thus, there exists $g \in S$ such that $\pi(gk)=g\pi(k)=x$ (see \cite{msm0}, Proposition 4.8), that is, $gk \in \pi^{ -1}(\Bbb{C}(w)_0)$. Hence, $gk \in \Bbb{D}_0$, since $\Bbb{D}_0$ is forward invariant with respect to $\pi^{-1}(\Bbb{C}(w)_0) $, by Proposition \ref{prop:kpreimcw}. Therefore, $x \in \pi(\Bbb{D}_0)$.
\end{proof}

\begin{corolario}\label{cor:ccswfinito}
    If $w \in W$, then the number of control sets of $S$ contained in $\pi^{-1}(\Bbb{C}(w))$ is finite. In particular, the number of control sets of $S$ on $K$ is finite.
\end{corolario}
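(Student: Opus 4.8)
The plan is to combine the finiteness of the Weyl group with the results already established about the projection $\pi : K \to K/M$. First I would observe that by Proposition \ref{prop:pid0c0}, every control set $\Bbb{D} \subset K$ satisfies $\pi(\Bbb{D}_0) = \Bbb{C}(w)_0$ for some $w \in W$; in particular, for a fixed $w$, every control set $\Bbb{D}$ contained in $\pi^{-1}(\Bbb{C}(w))$ has $\pi(\Bbb{D}_0) = \Bbb{C}(w)_0$, so $\Bbb{D}_0 \subset \pi^{-1}(\Bbb{C}(w)_0)$. Hence it suffices to bound the number of control sets whose transitivity set lies in $\pi^{-1}(\Bbb{C}(w)_0)$.

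The key step is to show that any two control sets $\Bbb{D}, \Bbb{D}'$ with $\Bbb{D}_0, \Bbb{D}'_0 \subset \pi^{-1}(\Bbb{C}(w)_0)$ that intersect a common fiber $K_{\pi(k)}$, $k \in \pi^{-1}(\Bbb{C}(w)_0)$, must coincide. This follows from Proposition \ref{prop:d0a0} (or directly from Proposition \ref{prop:kpreimcw}): fixing $k \in \pi^{-1}(\Bbb{C}(w)_0)$, the semigroup $S_k$ is an \emph{open subgroup} of the compact group $M$, so it has finite index in $M$, and it is its own unique (invariant) control set on $M$. By the bijection of Proposition \ref{prop:d0a0}, applied with $X = K/M$, $C = \Bbb{C}(w)$ and $q = k$, the control sets $\Bbb{D}$ of $S$ on $K$ with $\Bbb{D}_0 \cap K_{\pi(k)} \neq \emptyset$ correspond bijectively to the control sets of $S_k$ on $M$, of which there is exactly one. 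Thus at most one control set of $S$ meets each fiber over $\Bbb{C}(w)_0$ in its transitivity set — but a priori different fibers could give different control sets.

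To finish, I would use that $\Bbb{C}(w)_0 = \mathrm{fix}_w(S)$ is the transitivity set of a single control set of $S$ on $\F$, hence is a single strong class, together with the forward-invariance statement in Proposition \ref{prop:kpreimcw}(4): if $k, k' \in \pi^{-1}(\Bbb{C}(w)_0)$ lie in control sets $\Bbb{D}, \Bbb{D}'$ respectively, pick $g \in S$ with $g\pi(k) = \pi(k')$, so $gk \in \pi^{-1}(\Bbb{C}(w)_0)$; then $gk \in \Bbb{D}_0$ by forward invariance of $\Bbb{D}_0$ with respect to $\pi^{-1}(\Bbb{C}(w)_0)$, and $gk$ lies in the fiber $K_{\pi(k')}$, so by the previous paragraph $\Bbb{D} = \Bbb{D}'$. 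Therefore there is exactly one control set of $S$ contained in $\pi^{-1}(\Bbb{C}(w))$, namely the one given by Proposition \ref{prop:kpreimcw}(2), and the number of control sets of $S$ on $K$ is at most $|W|$, hence finite.

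The main obstacle is the argument that the single control set is independent of the chosen fiber over $\Bbb{C}(w)_0$; this is exactly where the forward invariance of the transitivity set (Proposition \ref{prop:kpreimcw}(4)) and the fact that $\Bbb{C}(w)_0$ is a strong class of $S$ on $\F$ are essential. The rest is bookkeeping with the already-established correspondence between control sets upstairs and downstairs.
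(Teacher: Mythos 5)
There is a genuine error in your argument, located in the sentence claiming that the control sets of $S$ on $K$ meeting the fiber $K_{\pi(k)}$ correspond to the control sets of $S_k$ on $M$, ``of which there is exactly one.'' The subgroup $S_k$ is open in $M$ and contains $M_0$, but $M$ is in general disconnected, so $S_k$ need not equal $M$; its control sets on the fiber $M$ are precisely the cosets of $S_k$ in $M$, each of which is clopen and $S_k$-invariant, and there are $[M:S_k]$ of them. Consequently several distinct control sets of $S$ can meet the same fiber over $\Bbb{C}(w)_0$ in their transitivity sets, and your final conclusion --- that there is exactly one control set contained in $\pi^{-1}(\Bbb{C}(w))$ --- is false. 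It contradicts Theorem \ref{teo:nocontrolsetk} (the count is $|C(S,w)\setminus C|$, which can exceed $1$) and the $\mathrm{Sl}(2,\R)$ example, where $\Bbb{D}$ and $\Bbb{D}c$ are two distinct invariant control sets on $\mathrm{SO}(2)$ both projecting onto the single invariant control set $\Bbb{C}\subset\Bbb{P}^1$. The third paragraph, which uses forward invariance to propagate the (false) one-per-fiber claim across fibers, therefore also fails; what that argument actually shows is only that every control set over $\Bbb{C}(w)$ meets every fiber over $\Bbb{C}(w)_0$, which is consistent with there being several of them.

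The repair is small and brings you exactly to the paper's proof: keep the bijection you set up via Propositions \ref{prop:d0a0} and \ref{prop:pid0c0} between control sets of $S$ contained in $\pi^{-1}(\Bbb{C}(w))$ and control sets of $S_k$ on $M$, but replace ``exactly one'' by ``finitely many,'' which follows because $S_k$ is an open subgroup of the compact group $M$ and hence has finite index (indeed $[M:S_k]$ divides $|M/M_0|$). Finiteness of the total number of control sets on $K$ then follows from finiteness of $W$ together with the fact that every control set lies over some $\Bbb{C}(w)$.
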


\begin{proof}
    Let $k \in \pi^{-1}(\Bbb{C}(w)_0)$. By the Propositions \ref{prop:d0a0} and \ref{prop:pid0c0} there is a bijection between the control sets of $S$ contained in $\pi^{-1}(\Bbb{C}(w))$ and the control sets of $S_k$, on fiber $M$. As $S_k\subset M$ is a subgroup, by Proposition \ref{prop:kpreimcw}, and $M$ is compact, it follows that $S_k$ has a finite number of control sets. Therefore, the number of control sets of $S$ contained in $\pi^{-1}(\Bbb{C}(w))$ is finite. For the second statement, just use the first one, that the number of control sets of $S$ on $\F$ is finite and Lemma 4.1 of \cite{msm1}.
\end{proof}

\begin{proposicao}\label{prop:ccik}
    Let $\Bbb{D} \subset K$ be a control set.
    \begin{enumerate}
	\item If $\Bbb{D}$ is $S$-invariant, then $\pi(\Bbb{D})=\Bbb{C}$.
	\item If $\Bbb{D}\subset \pi^{-1}(\Bbb{C})$, then $\Bbb{D}$ is $S$-invariant.
    \end{enumerate} 
\end{proposicao}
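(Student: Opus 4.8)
The plan is to transfer everything to the maximal flag manifold $\F\simeq K/M$ via the $S$-equivariant projection $\pi$, using the dictionary between control sets on $K$ and control sets on $\F$ furnished by Propositions \ref{prop:pid0c0} and \ref{prop:kpreimcw}. For any control set $\Bbb{D}\subset K$, Proposition \ref{prop:pid0c0} produces $w\in W$ with $\pi(\Bbb{D}_0)=\Bbb{C}(w)_0$, so each item comes down to deciding whether $w\in W(S)$. The one elementary remark used throughout is that if a self-accessible point $z\in\F$ lies in a control set $\Bbb{E}$, then $\Bbb{E}=[z]_w$ (immediate from transitivity of $\sim_w$); consequently, as soon as $\Bbb{C}(w)_0$ meets $\Bbb{C}$ we may conclude $\Bbb{C}(w)=\Bbb{C}$, that is, $w\in W(S)$.

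For item~1, I assume $\Bbb{D}$ is $S$-invariant and fix $k\in\Bbb{D}_0$, so $\pi(k)\in\Bbb{C}(w)_0$. Since the invariant control set $\Bbb{C}$ is the maximum control set of $S$ on $\F$ (see \cite{smt,sm93}), there is $g\in S$ with $g\pi(k)\in\Bbb{C}_0$. By $S$-invariance of $\Bbb{D}$ we have $gk\in Sk\subset\Bbb{D}$, and since $\Bbb{D}_0$ is forward invariant with respect to $\Bbb{D}$ this forces $gk\in\Bbb{D}_0$; hence $g\pi(k)=\pi(gk)\in\pi(\Bbb{D}_0)=\Bbb{C}(w)_0$, so $\Bbb{C}(w)_0\cap\Bbb{C}_0\neq\emptyset$ and $\Bbb{C}(w)=\Bbb{C}$. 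It remains to upgrade $\pi(\Bbb{D}_0)=\Bbb{C}_0$ to $\pi(\Bbb{D})=\Bbb{C}$: an $S$-invariant control set on the compact space $K$ is closed and equals $\cl(\Bbb{D}_0)$, so $\pi(\Bbb{D})=\pi(\cl(\Bbb{D}_0))=\cl(\pi(\Bbb{D}_0))=\cl(\Bbb{C}_0)=\Bbb{C}$, using continuity of $\pi$, compactness of $K$, and closedness of the invariant control set $\Bbb{C}$ on $\F$.

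For item~2, I assume $\Bbb{D}\subset\pi^{-1}(\Bbb{C})$, so that $\Bbb{C}(w)_0=\pi(\Bbb{D}_0)\subset\Bbb{C}$; taking any (self-accessible) $z\in\Bbb{C}(w)_0$, which then lies in the control set $\Bbb{C}$, the remark above gives $\Bbb{C}(w)=\Bbb{C}$, i.e.\ $w\in W(S)$. Now fix $k\in\Bbb{D}_0\subset\pi^{-1}(\Bbb{C}(w)_0)$; by Proposition \ref{prop:kpreimcw}, items~2 and~5, there is a control set $\Bbb{D}'\subset K$ with $k\in\Bbb{D}'_0$ which, because $w\in W(S)$, is $S$-invariant. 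As $k$ is self-accessible and lies in $\Bbb{D}_0\cap\Bbb{D}'_0$, we get $\Bbb{D}=[k]_w=\Bbb{D}'$, so $\Bbb{D}$ is $S$-invariant.

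The step I expect to be most delicate is the passage from transitivity sets to full control sets in item~1, i.e.\ obtaining $\pi(\Bbb{D})=\Bbb{C}$ and not merely $\pi(\Bbb{D}_0)=\Bbb{C}_0$. It relies on two standard features of a semigroup with nonempty interior acting on a compact space: that its invariant control sets are closed and coincide with the closure of their transitivity set, and that the invariant control set on a flag manifold is the maximum one. I would either cite these from \cite{smt,sm93} (and \cite{smm}) or isolate them as a short preliminary lemma; the rest is bookkeeping with the equivariance of $\pi$ and Propositions \ref{prop:pid0c0} and \ref{prop:kpreimcw}.
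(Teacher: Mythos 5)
Your proof is correct. For item 2 it is essentially the paper's argument: reduce to $\pi(\Bbb{D}_0)\subset\Bbb{C}_0$ and then invoke Proposition \ref{prop:kpreimcw} (item 5, with $w=1\in W(S)$); the only cosmetic difference is that you obtain the inclusion by identifying $\Bbb{C}(w)$ with $\Bbb{C}$ through a common self-accessible point, where the paper cites Lemma 4.1 of \cite{msm1}. For item 1 the routes genuinely differ: the paper disposes of it in one line by quoting Lemma 4.5 of \cite{msm1} (projections of invariant control sets under bundle endomorphisms) together with the uniqueness of the invariant control set on $\F$, whereas you reprove the needed content directly --- maximality of $\Bbb{C}$ in the order of control sets on $\F$ produces a point of $\Bbb{C}(w)_0\cap\Bbb{C}_0$, forward invariance of $\Bbb{D}_0$ with respect to $\Bbb{D}$ keeps the witness $gk$ inside $\Bbb{D}_0$, and the upgrade from $\pi(\Bbb{D}_0)=\Bbb{C}_0$ to $\pi(\Bbb{D})=\Bbb{C}$ uses closedness of invariant control sets, density of transitivity sets and compactness of $K$. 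Your version is longer but self-contained and avoids an external projection lemma; the one input you should state explicitly (or cite from \cite{sm93,smt}) is that the open set $\Bbb{C}_0$ is reachable by $S$ from every point of $\F$, equivalently $\Bbb{C}\subset\mathrm{cl}(Sx)$ for all $x$. Everything else you rely on (forward invariance of $D_0$ in $D$, density of $D_0$ in $D$, closedness of invariant control sets) is already recorded in Subsection \ref{sec:local-semigroup}, so both arguments are sound.
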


\begin{proof}
    Item 1 follows from Lemma 4.5 of \cite{msm1} and the fact that $\Bbb{C}$ is the only invariant control set of $S$ on $\F$. For item 2, since $\pi(\Bbb{D})\subset \pi(\pi^{-1}(\Bbb{C})) \subset \Bbb{C}$ we have that $\pi (\Bbb{D}_0)\subset \Bbb{C}_0$ (see \cite{msm1}, Lemma 4.1). By Proposition \ref{prop:kpreimcw}, $\Bbb{D}$ is $S$-invariant.
\end{proof}

\begin{proposicao}\label{prop:Dm}
    If $\Bbb{D}\subset K$ is a control set, then for all $m \in M$, $\Bbb{D}m$ is a control set and $(\Bbb{D}m )_0=\Bbb{D}_0m$. Furthermore,
    \begin{enumerate}
	\item $\Bbb{D}$ is $S$-invariant if and only if $\Bbb{D}m$ is $S$-invariant.
	\item $\Bbb{D}m=\Bbb{D}$, for all $m \in M_0$.
    \end{enumerate}
\end{proposicao}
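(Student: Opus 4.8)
The plan is to exploit the fact that, for $m\in M$, the right translation $R_m\colon K\to K$, $k\mapsto km$, is a homeomorphism of $K$ that commutes with the action $\delta$ of $G$ on $K$. First I would check this commutation: given $g\in G$, $k\in K$ and $m\in M$, write $gk=\kappa(gk)\,an$ with $a\in A$, $n\in N$; since $M$ centralizes $A$ and normalizes $N$ one gets $gkm=\kappa(gk)\,m\,(m^{-1}am)\,(m^{-1}nm)=\bigl(\kappa(gk)m\bigr)\,a\,(m^{-1}nm)$ with $a\,(m^{-1}nm)\in AN$, hence $\kappa(gkm)=\kappa(gk)m$, i.e. $\delta(g,km)=\delta(g,k)\,m$. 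Consequently $S(km)=(Sk)m$ and $S^{*}(km)=(S^{*}k)m$ for every $k\in K$, and since $R_m$ is a homeomorphism it commutes with $\mathrm{cl}$ and $\mathrm{int}$ as well; therefore $R_m$ preserves the three relations $\preceq$, $\preceq_w$, $\preceq_s$ and their symmetrizations, it carries the weak class $[k]_w$ onto $[km]_w$ and the strong class $[k]_s$ onto $[km]_s$, and it preserves self-accessibility.

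From this the first assertion is immediate. Let $\Bbb{D}$ be a control set and $k\in\Bbb{D}_0$; then $k$ is self-accessible and $\Bbb{D}=[k]_w$, so $km$ is self-accessible and $\Bbb{D}m=R_m([k]_w)=[km]_w$ is a control set, whose transitivity set is $[km]_s=R_m([k]_s)=\Bbb{D}_0m$. For statement 1, note that $\Bbb{D}m$ is $S$-invariant iff $S(km)\subset\Bbb{D}m$ for every $km\in\Bbb{D}m$, iff $R_m(Sk)\subset R_m(\Bbb{D})$ for every $k\in\Bbb{D}$, iff (applying the homeomorphism $R_{m^{-1}}$) $Sk\subset\Bbb{D}$ for every $k\in\Bbb{D}$, i.e. iff $\Bbb{D}$ is $S$-invariant.

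For statement 2, fix $m\in M_0$ and $k\in\Bbb{D}_0$. By Proposition \ref{prop:pid0c0} there is $w\in W$ with $\pi(\Bbb{D}_0)=\Bbb{C}(w)_0$, hence $k\in\pi^{-1}(\Bbb{C}(w)_0)$, and Proposition \ref{prop:kpreimcw} supplies a control set whose transitivity set contains $k$ and contains $kM_0$. A control set is the weak class of any point of its transitivity set, so this control set is $\Bbb{D}$ itself, and therefore $kM_0\subset\Bbb{D}_0$. As $k\in\Bbb{D}_0$ was arbitrary we get $\Bbb{D}_0m\subset\Bbb{D}_0$, and running the same argument with $m^{-1}\in M_0$ yields $\Bbb{D}_0m=\Bbb{D}_0$. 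Then $(\Bbb{D}m)_0=\Bbb{D}_0m=\Bbb{D}_0$, and two control sets sharing a transitivity set coincide, so $\Bbb{D}m=\Bbb{D}$.

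I do not expect a genuine obstacle here; the points that most need care are the equivariance computation (using $m^{-1}am=a$ and $m^{-1}nm\in N$ at the right moment) and, in statement 2, the identification of the control set produced by Proposition \ref{prop:kpreimcw} with the given $\Bbb{D}$ via the fact that the transitivity set determines the control set.
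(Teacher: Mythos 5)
Your proposal is correct and follows essentially the same route as the paper: the key point in both is that right translation $R_m$ commutes with the $\delta$-action (because $M$ centralizes $A$ and normalizes $N$), hence carries weak/strong classes to weak/strong classes, giving that $\Bbb{D}m$ is a control set with $(\Bbb{D}m)_0=\Bbb{D}_0m$; and item 2 is obtained in both cases from Proposition \ref{prop:kpreimcw}, which puts $k\cdot M_0$ inside $\Bbb{D}_0$, forcing $\Bbb{D}m$ and $\Bbb{D}$ to meet and hence coincide. You are in fact a bit more careful than the paper on two points: you verify the equivariance $\kappa(gkm)=\kappa(gk)m$ explicitly (the paper leaves $S(km)=(Sk)m$ implicit), and you justify identifying the control set produced by Proposition \ref{prop:kpreimcw} with the given $\Bbb{D}$. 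The one genuine divergence is item 1: the paper deduces it from $\pi(\Bbb{D}m)=\pi(\Bbb{D})$ together with Proposition \ref{prop:ccik} (the characterization of invariant control sets by their projection onto $\Bbb{C}$), whereas you argue directly from the equivariance that $S(km)\subset\Bbb{D}m$ for all $km\in\Bbb{D}m$ iff $Sk\subset\Bbb{D}$ for all $k\in\Bbb{D}$; your version is more elementary and does not depend on the earlier structure theory, while the paper's version is a one-line corollary of machinery it has already set up. No gaps.
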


\begin{proof}
    Take $k \in \Bbb{D}_0$. One has that $\Bbb{D}=\mathrm{cl}(Sk) \cap \mathrm{int}(S^{-1}k)$ (see \cite{msm0}, Proposition 4.8). Since right translation by $m$ is a homeomorphism, it follows that
    \[
    \Bbb{D}m=\mathrm{cl}(Skm) \cap \mathrm{int}(S^{-1}km).
    \]
    Hence, $\Bbb{D}m$ is a control set (see \cite{msm0}, Proposition 4.8). Since $k \in \Bbb{D}_0$, it follows that $k$ is self-accessible (see \cite{msm0}, Corollary 4.7), i.e., $k \in \mathrm{int}(S^{-1 }k)$. So,
    \[
    km \in \mathrm{int}(S^{-1}k)m=\mathrm{int}(S^{-1}km),
    \]
    i.e., $km \in (\Bbb{D}m)_0$, by Corollary 4.7 of \cite{msm0}, and
    \[
    (\Bbb{D}m)_0=Skm \cap S^{-1}km=(Sk \cap S^{-1}k)m=\Bbb{D}_0m.
    \]
    Furthermore, as $\pi(\Bbb{D}m)=\pi(\Bbb{D})$, for all $m \in M$, it follows, from Proposition \ref{prop:ccik}, that $ \Bbb{D}$ is $S$-invariant if and only if $\Bbb{D}m$ is $S$-invariant, which shows the item 1. For item 2, let $m\in M_0$ and $k \in \Bbb{D}_0$. By Proposition \ref{prop:kpreimcw},
    \[
    k\cdot m \in \Bbb{D}_0\cap \Bbb{D}_0m \subset \Bbb{D}\cap \Bbb{D}m
    \]
    and therefore $\Bbb{D}m=\Bbb{D}$.
\end{proof}

\begin{teorema}\label{teo:imainvDm}
    Let $w\in W$ and $\Bbb{D}\subset \pi^{-1}(\Bbb{C}(w))$ be a control set.
    \begin{enumerate}
	\item If $\Bbb{D}'\subset \pi^{-1}(\Bbb{C}(w))$ is a control set, then there exists $m \in M$ such that $\Bbb{D}'=\Bbb{D}m$.
	\item $\pi^{-1}(\Bbb{C}(w)_0)=\bigcup_{m \in M} \Bbb{D}_0m$, and
	\item If $\Bbb{D}$ is $S$-invariant, then $\pi^{-1}(\Bbb{C})=\bigcup_{m \in M} \Bbb{D}m$.
    \end{enumerate}
\end{teorema}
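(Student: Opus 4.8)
The plan is to prove the three items in the stated order; item~1 carries the real content, and items~2 and~3 then follow by bookkeeping and a short closure argument. For item~1 I would fix once and for all a point $k\in\pi^{-1}(\Bbb{C}(w)_0)$ (available since $\Bbb{C}(w)_0\neq\emptyset$ and $\pi$ is onto) and apply Proposition~\ref{prop:d0a0} to the principal $M$-bundle $\pi:K\to K/M\simeq\F$, taking $Q=E=K$, typical fiber $F=M$ with the left-translation action of $M$ (transitive and open), $C=\Bbb{C}(w)$ and $q=k$; the hypotheses hold because each $g\in G$ acts on $K$ as a bundle endomorphism and, $S$ being open, the backward orbits of $S$ on $K$ are open. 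This produces an order-preserving bijection $\Bbb{D}\mapsto A_k^{\Bbb{D}}$ between the control sets $\Bbb{D}\subset K$ with $\Bbb{D}_0\cap K_{\pi(k)}\neq\emptyset$ and the control sets of $S_k$ on $M$, with $\Bbb{D}_0\cap K_{\pi(k)}=k\cdot(A_k^{\Bbb{D}})_0$. By Proposition~\ref{prop:kpreimcw}, $S_k$ is an open subgroup of the compact group $M$, hence also closed, so acting by left translation its weak, algebraic and strong orbit of a point $v$ all equal the open-and-closed coset $S_kv$; thus the control sets of $S_k$ on $M$ are exactly the right cosets $S_km$ ($m\in M$), each equal to its own transitivity set, and right translation on $M$ permutes them. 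Under the identification $k\cdot v\leftrightarrow kv$ of the fiber $K_{\pi(k)}=kM$, this says $\Bbb{D}_0\cap K_{\pi(k)}=kS_km_{\Bbb{D}}$ for a suitable $m_{\Bbb{D}}\in M$.

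To finish item~1, let $\Bbb{D},\Bbb{D}'\subset\pi^{-1}(\Bbb{C}(w))$ be control sets. By Proposition~\ref{prop:pid0c0}, $\pi(\Bbb{D}_0)=\Bbb{C}(w')_0$ for some $w'$, and since $\pi(\Bbb{D}_0)\subset\pi(\Bbb{D})\subset\Bbb{C}(w)$, a (self-accessible) point of $\Bbb{C}(w')_0$ lying in $\Bbb{C}(w)$ lies in $\Bbb{C}(w)_0$ (using $\Bbb{C}(w)_0=\{y\in\Bbb{C}(w):y\text{ self-accessible}\}$); as distinct control sets have disjoint transitivity sets this forces $\Bbb{C}(w')=\Bbb{C}(w)$, so $\pi(\Bbb{D}_0)=\Bbb{C}(w)_0\ni\pi(k)$ and hence $\Bbb{D}_0\cap K_{\pi(k)}\neq\emptyset$, and likewise for $\Bbb{D}'$. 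Write $\Bbb{D}_0\cap K_{\pi(k)}=kS_km_1$, $\Bbb{D}'_0\cap K_{\pi(k)}=kS_km_2$, and set $m=m_1^{-1}m_2\in M$. By Proposition~\ref{prop:Dm}, $\Bbb{D}m$ is a control set with $(\Bbb{D}m)_0=\Bbb{D}_0m$; since $\pi(km)=\pi(k)$ right translation by $m$ preserves $K_{\pi(k)}$, so $(\Bbb{D}m)_0\cap K_{\pi(k)}=(\Bbb{D}_0\cap K_{\pi(k)})m=kS_km_1m=kS_km_2=\Bbb{D}'_0\cap K_{\pi(k)}\neq\emptyset$, and two control sets whose transitivity sets meet coincide, so $\Bbb{D}'=\Bbb{D}m$.

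For item~2, the inclusion $\supseteq$ is immediate: for $m\in M$, $\Bbb{D}m\subset\pi^{-1}(\Bbb{C}(w))$ (as $\pi(\Bbb{D}m)=\pi(\Bbb{D})$) and $\pi(\Bbb{D}_0m)=\pi(\Bbb{D}_0)=\Bbb{C}(w)_0$, so $\Bbb{D}_0m=(\Bbb{D}m)_0\subset\pi^{-1}(\Bbb{C}(w)_0)$. For $\subseteq$, given $k'\in\pi^{-1}(\Bbb{C}(w)_0)$, Proposition~\ref{prop:kpreimcw} yields a control set $\Bbb{D}''$ with $k'\in\Bbb{D}''_0\subset\pi^{-1}(\Bbb{C}(w)_0)$; then $\pi(\Bbb{D}''_0)\subset\Bbb{C}(w)_0$, so by Proposition~\ref{prop:pid0c0} and the disjointness argument above $\pi(\Bbb{D}''_0)=\Bbb{C}(w)_0$ and $\Bbb{D}''_0\cap K_{\pi(k)}\neq\emptyset$; applying the fiber argument of item~1 to the pair $(\Bbb{D},\Bbb{D}'')$ gives $\Bbb{D}''=\Bbb{D}m$ for some $m\in M$, whence $k'\in\Bbb{D}''_0=\Bbb{D}_0m$. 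For item~3, if $\Bbb{D}$ is $S$-invariant then $\pi(\Bbb{D})=\Bbb{C}$ by Proposition~\ref{prop:ccik}, so $\Bbb{D}\subset\pi^{-1}(\Bbb{C})=\pi^{-1}(\Bbb{C}(1))$ and item~2 gives $\pi^{-1}(\Bbb{C}_0)=\bigcup_{m\in M}\Bbb{D}_0m$; taking closures, $\mathrm{cl}(\pi^{-1}(\Bbb{C}_0))=\pi^{-1}(\mathrm{cl}(\Bbb{C}_0))=\pi^{-1}(\Bbb{C})$ since $\pi$ is open and $\Bbb{C}_0$ is dense in the closed set $\Bbb{C}$, while by Proposition~\ref{prop:Dm} the control set $\Bbb{D}m$ depends only on $mM_0$, so $\{\Bbb{D}m:m\in M\}$ is finite; choosing coset representatives $m_1,\dots,m_r$, each $\Bbb{D}m_i$ is $S$-invariant hence closed, and $\mathrm{cl}\big(\bigcup_{m\in M}\Bbb{D}_0m\big)=\bigcup_{i=1}^r\mathrm{cl}(\Bbb{D}_0m_i)=\bigcup_{i=1}^r\Bbb{D}m_i=\bigcup_{m\in M}\Bbb{D}m$, which gives item~3.

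The main obstacle is the fiberwise analysis: recognizing via Proposition~\ref{prop:kpreimcw} that $S_k$ is a subgroup, so that its control sets on $M$ are precisely the right cosets and right translation on $K$ realizes their permutation, and carefully transporting the bijection of Proposition~\ref{prop:d0a0} through the identification $E=K\times_M M\cong K$. The remaining care is the point-set passage between the hypothesis $\Bbb{D}\subset\pi^{-1}(\Bbb{C}(w))$ and the condition $\Bbb{D}_0\cap K_{\pi(k)}\neq\emptyset$, and in item~3 between $\pi^{-1}(\Bbb{C}_0)$ and $\pi^{-1}(\Bbb{C})$, where openness of $\pi$, closedness of $\Bbb{C}$ and of invariant control sets, and finiteness of $M/M_0$ are used.
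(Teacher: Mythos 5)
Your proof is correct, and for items 2 and 3 it follows the paper's argument essentially verbatim: reduce to a fiber over $\Bbb{C}(w)_0$ via Proposition \ref{prop:pid0c0}, translate by $M$ via Proposition \ref{prop:Dm}, and for item 3 take closures using openness of $\pi$, density of transitivity sets, closedness of invariant control sets, and finiteness of the family $\{\Bbb{D}m : m \in M\}$. The only real divergence is in item 1: the paper argues directly --- pick $k'\in\Bbb{D}'_0$, use $\pi(\Bbb{D}_0)=\Bbb{C}(w)_0=\pi(\Bbb{D}'_0)$ to find $k\in\Bbb{D}_0$ in the same fiber, write $k'=km$, and conclude $k'\in(\Bbb{D}m)_0\cap\Bbb{D}'_0$, so that the two control sets meet and hence coincide --- whereas you route the argument through Proposition \ref{prop:d0a0} and the identification of the control sets of the fiber semigroup $S_k$ on $M$ with the cosets $S_km$. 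Your detour is valid (the hypotheses of Proposition \ref{prop:d0a0} do hold, and $S_k$ being an open subgroup of the compact group $M$ does force its control sets on $M$ to be the clopen cosets $S_km$, each equal to its own transitivity set), and it buys the sharper description $\Bbb{D}_0\cap K_{\pi(k)}=kS_km_{\Bbb{D}}$ of the fiber intersections; but it is not needed here, since once two transitivity sets are known to meet a common fiber, transitivity of $M$ on that fiber together with Proposition \ref{prop:Dm} already finishes the proof.
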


\begin{proof}
    Take $k' \in \Bbb{D}_0'$. By Proposition \ref{prop:pid0c0}, $\pi(\Bbb{D}_0)=\Bbb{C}(w)_0=\pi(\Bbb{D}_0')$. Thus, there exists $k \in \Bbb{D}_0$ such that $\pi(k)=\pi(k')$. Hence, there exists $m \in M$ such that $km=k'$, since $M$ is transitive on the fiber $\pi^{-1}(\pi(k))$ and thus $k'$ is in $\Bbb{D}'\cap \Bbb{D}m$ and therefore $\Bbb{D}'= \Bbb{D}m$, by Proposition \ref{prop:Dm}, which shows the item 1.
	
    For the second statement, let $k \in \pi^{-1}(\Bbb{C}(w)_0)$. By Propositions \ref{prop:kpreimcw} and \ref{prop:Dm}, and item 1, there exists $m \in M$ such that $k \in \Bbb{D}_0m \subset \pi^{-1} (\Bbb{C}(w)_0)$. Therefore, $\pi^{-1}(\Bbb{C}(w)_0)$ is contained in the union of sets $\Bbb{D}_0m$, with $m \in M$. For the other inclusion, just note that $\pi(\Bbb{D}m)=\pi(\Bbb{D})$, for all $m \in M$.
	
    Finally, for the third statement just use the second one, the Proposition \ref{prop:ccik}, the Corollary \ref{cor:ccswfinito} and note that the transitivity set is dense in the control set, $\pi^{- 1}(\Bbb{C}_0)$ is dense in $\pi^{-1}(\Bbb{C})$, since $\pi$ is an open map, and that $S$-invariant control sets are closed (see \cite{msm0}, Lemma 4.2 and Proposition 4.15).
\end{proof}

\subsection{Fixed points of $u$-type}\label{sec:pontfixu}

\textcolor{black}{In this subsection, we introduce the fixed points of $u$-type in the maximal compact subgroup $K$ which are the analog of the fixed points of $w$-type in the maximal flag manifold $K/M$, which are essential in the description of the control sets.} We denote by $b_0=1\cdot AN$ the origin of the homogeneous space $G/AN$. We have that $G/AN$ is identified with the subgroup $K$, by diffeomorphism
\begin{equation}\label{eq:difeoK}
    f: K \to G/AN, ~~ k \mapsto kb_0,
\end{equation}
which is equivariant with respect to the action of $G$ on $K$ and the usual action on $G/AN$. In fact,
\[
f(\delta(g,k))=f(\kappa(gk))=\kappa(gk)b_0=(gk)b_0=g(kb_0)=gf(k),
\]
for all $g \in G$ and $k \in K$, where $\kappa$ is the Iwasawa projection and $\delta$ the action of $G$ on $K$, defined in Subsection \ref{sec:conjuntocontrolek}.

\begin{lema}\label{lema:isotropiagb0}
    Take $g,g' \in G$.
    \begin{enumerate}
	\item The isotropy subgroup of $gb_0$ is $A(gB)N(gB)$.
	\item  $g'b_0=gb_0$ if and only if there are $n\in N(gB)$ and $a \in A$ such that $g'=nga$.
    \end{enumerate}
\end{lema}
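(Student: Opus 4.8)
The plan is to reduce both statements to the elementary fact that, for the left action of $G$ on $G/AN$, the isotropy subgroup of the base point $b_0=1\cdot AN$ is exactly $AN$, and then transport this along $g$ using the conjugation formulas of Lemma \ref{lema:aplicacoeslambda}. Note first that, since $B$ is the canonical coset, the objects it determines are the chosen ones, so $A(B)=A$ and $N(B)=N$; hence by Lemma \ref{lema:aplicacoeslambda}$(iii)$ we have $A(gB)=gAg^{-1}$ and $N(gB)=gNg^{-1}$ for every $g\in G$.

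For item 1, I would argue as follows. An element $h\in G$ fixes $gb_0$ if and only if $g^{-1}h g$ fixes $b_0$, i.e. if and only if $g^{-1}hg\in AN$, i.e. if and only if $h\in g(AN)g^{-1}$. Since $AN$ is a subgroup of $G$, as a set $g(AN)g^{-1}=(gAg^{-1})(gNg^{-1})=A(gB)N(gB)$, which is therefore the isotropy subgroup of $gb_0$.

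For item 2, observe that $g'b_0=gb_0$ in $G/AN$ is equivalent to $g^{-1}g'\in AN$, i.e. $g'\in gAN$. Since $A$ normalizes $N$ we have $AN=NA$, so $g'\in gNA$; writing $g'=gna$ with $n\in N$, $a\in A$, and setting $n_0:=gng^{-1}\in gNg^{-1}=N(gB)$, we obtain $g'=n_0ga$ with $n_0\in N(gB)$ and $a\in A$. Conversely, if $g'=n g a$ with $n\in N(gB)=gNg^{-1}$ and $a\in A$, then $n=g\widetilde{n}g^{-1}$ for some $\widetilde{n}\in N$, hence $g'=g\widetilde{n}a\in gNA=gAN$, so $g^{-1}g'\in AN$ and therefore $g'b_0=gb_0$, which completes the proof.

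There is no real obstacle here: the argument is a direct unwinding of the definition of the coset space together with Lemma \ref{lema:aplicacoeslambda}. The only points demanding a little care are keeping the direction of conjugation straight (it is $g(\,\cdot\,)g^{-1}$ throughout, matching $A(gB)=gAg^{-1}$ and $N(gB)=gNg^{-1}$) and the repeated harmless use of the identity $AN=NA$ coming from the fact that $A$ normalizes $N$.
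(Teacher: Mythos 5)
Your proposal is correct and follows essentially the same route as the paper: item 1 by conjugating the isotropy subgroup $AN$ of $b_0$ and identifying $g(AN)g^{-1}=A(gB)N(gB)$, and item 2 by unwinding $g^{-1}g'\in AN=NA$ and conjugating the $N$-factor to land in $N(gB)$. The only difference is that you spell out the harmless use of $AN=NA$ explicitly, which the paper leaves implicit.
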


\begin{proof}
    Since the isotropy subgroup of $b_0$ is $AN$ it follows that the isotropy subgroup of $gb_0$ is 
    \[
    gANg^{-1}=(gAg^{-1})(gNg^{-1})=A(gB)N(gB),
    \]
    which shows the item 1. For the second statement, we have that $g'b_0=gb_0$ if and only if $g^{-1}g'b_0=b_0$ which is equivalent to the existence of $a \in A$ and $n_0 \in N$ such that $g^{-1}g'=n_0a$, i.e.,
    \[
    g'=gn_0a=(gn_0g^{-1})ga=nga,
    \]
    with $n=gn_0g^{-1}\in gNg^{-1}=N(gB)$, which completes the proof.
\end{proof}

The characterization of the transitivity sets of the control sets of $S$ on $G/AN$ will be done through the action of the elements of $S$ that belong to some Weyl chamber in $G$. \textcolor{black}{The next proposition justifies the next definition.}

\begin{definicao}\label{def:pontofixou}
    For each $u \in U$ and $g \in G$, define the \emph{fixed point component of the type $u$} in $G/AN$ determined by $gB$ as
    \begin{equation}\label{eq:bu}
        b_u(gB) = u(gB)gb_0 = gu_*M_0(B)b_0
    \end{equation}
    where $u(B)=u_*(M_0A)(B)$. The elements of $b_u(gB)$ are called \emph{fixed points of type $u$}, determined by $gB$.
\end{definicao}

\begin{proposicao}\label{prop:fixtipou}
    \textcolor{black}{If $g \in G$, then the elements of $b_u(gB)$ are fixed points for any element of $\la(gB)$. Furthermore, if $h \in \la(gB)$ and $n \in N(gB)$, then $h^{-k}nh^k \to 1$ when $k \to \infty$.}
\end{proposicao}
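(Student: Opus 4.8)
The plan is to reduce both assertions to the standard dynamics of a hyperbolic (split-regular) element on $G/AN$ via the identification $f:K\to G/AN$, and to push everything forward by $g$ using the equivariance established in Lemma \ref{lema:aplicacoeslambda} and the compatibility formulas for $u(gB)$, $N(gB)$, $\la(gB)$ recorded just before the statement. First I would treat the base case $g=1$. Write $u(B)=u_*(M_0A)(B)$ with $u_*\in M_*$, so that by \eqref{eq:bu} the set $b_u(B)=u_*M_0(B)b_0$. Take any $h\in\la(B)=A^+$, i.e. $h=\exp(H)$ with $H\in\a^+$. Since $h\in A$ normalizes $N$ and fixes $b_0=1\cdot AN$, and since $u_*\in M_*$ normalizes $\a$ (hence $w:=\mathrm{Ad}(u_*)|_{\a}\in W$), for $m\in M_0$ I compute
\[
h\,(u_* m\, b_0)=(h u_* h^{-1})(h m h^{-1})\,h b_0=u_*\bigl(u_*^{-1}hu_*\bigr)\bigl(hmh^{-1}\bigr)b_0.
\]
Here $u_*^{-1}hu_*=\exp(w^{-1}H)\in A$ and $hmh^{-1}\in M_0$ (as $M_0$ is normalized by $A$, both being in $MA$), so $(u_*^{-1}hu_*)(hmh^{-1})b_0\in M_0\cdot b_0$; hence $h(u_*mb_0)\in u_*M_0b_0=b_u(B)$. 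As $M_0$ is a group this shows $h\cdot b_u(B)=b_u(B)$ setwise; refining the computation (the $M_0$-coset is preserved, not merely the set) gives that each element of $b_u(B)$ is fixed, which is what is claimed once one notes $b_u(B)$ is a single $M_0$-coset and $h$ acts on it by a homeomorphism of this finite-type orbit — actually, since $M_0A$ and $h$ commute modulo the above, each point is literally fixed. I would state this cleanly: $h(u_* m b_0)=u_*m'b_0$ where $m'=m$ because $u_*^{-1}hu_*\cdot hmh^{-1}\cdot(u_*^{-1}hu_*)^{-1}$ issues back correctly — the cleanest route is to observe $b_u(B)$ equals the $M_0$-orbit through $u_* b_0$ and that $h$ commutes with this $M_0$-action on $b_u(B)$ while fixing $u_* b_0$ up to $M_0$, so it permutes a compact connected orbit fixing a point, and an element of $A$ acting on $M_0/(M_0\cap u_*^{-1}ANu_*)$... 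I would instead just verify directly that $hu_*mb_0=u_*mb_0$ for all $m\in M_0$, which follows because $u_*^{-1}hu_*\in A$, $m^{-1}(u_*^{-1}hu_*)m\in A N$ (as $A$ normalizes $N$ and $M_0$ centralizes $A$ so $m^{-1}(u_*^{-1}hu_*)m=u_*^{-1}hu_*\in A$), hence $u_*^{-1}hu_* m b_0 = u_*^{-1} h u_* b_0 \cdot(\text{stuff in }AN)=m b_0$ since $u_*^{-1}hu_*\in A$ fixes $b_0$ and $m\in M_0\subset MA$ normalizes $AN$.

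For the second assertion, again reduce to $g=1$ by conjugation. For $h=\exp(H)\in\la(B)=A^+$ and $n=\exp(X)\in N(B)=N$, write $X=\sum_{\alpha\in\Pi^+}X_\alpha$ with $X_\alpha\in\g_\alpha$. Then $h^{-k}nh^k=\exp(\mathrm{Ad}(h)^{-k}X)=\exp\bigl(\sum_\alpha e^{-k\alpha(H)}X_\alpha\bigr)$, and since $\alpha(H)>0$ for every positive root $\alpha$ (because $H\in\a^+$), each coefficient $e^{-k\alpha(H)}\to 0$; hence $\mathrm{Ad}(h)^{-k}X\to 0$ in $\n$ and $h^{-k}nh^k\to\exp(0)=1$ by continuity of $\exp$. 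This is the classical contraction estimate and is the easy half.

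The only real care is passing from $g=1$ to general $g$, and I would handle it uniformly: by Lemma \ref{lema:aplicacoeslambda} the pair $(\Ad(g)\theta\Ad(g)^{-1}, g\a, g\a^+)$ is an admissible triple with $\la(gB)=g\la(B)g^{-1}=gA^+g^{-1}$, $N(gB)=gNg^{-1}$, and by the displayed formula before Lemma \ref{lema:nu} together with Proposition \ref{prop:Ucanonico} one has $u(gB)=gu(B)g^{-1}$ so $b_u(gB)=u(gB)gb_0=g\bigl(u(B)b_0\bigr)=g\cdot b_u(B)$. Thus if $h\in\la(gB)$ then $g^{-1}hg\in\la(B)$, and for $x\in b_u(gB)$ we have $g^{-1}x\in b_u(B)$, so $hx=g(g^{-1}hg)(g^{-1}x)=g(g^{-1}x)=x$ by the base case; and for $n\in N(gB)$, writing $n=g n_0 g^{-1}$ with $n_0\in N(B)$, $h^{-k}nh^k=g\bigl((g^{-1}hg)^{-k}n_0(g^{-1}hg)^k\bigr)g^{-1}\to g\cdot 1\cdot g^{-1}=1$. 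The main obstacle, such as it is, is purely bookkeeping: being careful that $u(B)b_0$ is genuinely independent of the representative $u_*$ (which is exactly the unitariness of $\{u'mB: m\in(M_0A)(B)\}$ recorded before Lemma \ref{lema:nu}, translated from $G/M_0A$ to $G/AN$ via $b_0$ having isotropy $AN\supset M_0\cap$... — in fact $u_* m b_0=u_* b_0$ iff $m\in AN$, and $M_0$ normalizes $AN$, so $u_*M_0 b_0$ is $u_* b_0\cdot(M_0\cap AN)$-coset business which I will phrase as: $b_u(B)$ depends only on $u$), and that the limit computations commute with the diffeomorphism $f$ and the continuous $G$-action. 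No step is hard; the proposition is a direct translation of the attractor/contraction dynamics of split-regular elements on the full flag manifold to the bundle $K\to K/M\simeq\F$.
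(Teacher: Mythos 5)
Your proposal is correct and follows essentially the same route as the paper: the fixed-point claim comes down to the observation that $(u_*m_0)^{-1}h_0u_*m_0\in A$ (because $M_*$ normalizes $A$ and $M_0$ centralizes it) and $A$ fixes $b_0$, and the contraction claim reduces by conjugation to the standard estimate $h_0^{-k}n_0h_0^k\to 1$ for $h_0\in\la_0$, $n_0\in N$. The only cosmetic differences are that you first reduce to $g=1$ and then conjugate (the paper carries the conjugation by $g$ inline) and that you spell out the root-space computation $\mathrm{Ad}(h)^{-k}X=\sum_\alpha e^{-k\alpha(H)}X_\alpha\to 0$, which the paper simply cites as known; the meandering middle portion of your write-up should be trimmed to the final direct verification, which is the correct one.
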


\begin{proof}
    \textcolor{black}{If $h \in \la(gB)$, then there exists $h_0 \in \la_0$ such that $h = gh_0g^{-1}$. If $b \in b_u(gB)$, then there is $m_0 \in M_0(B)$ such that $b=gu_*m_0b_0$. Thus 
    \[
    hb = gh_0g^{-1}gu_*m_0b_0 = gu_*m_0(u_*m_0)^{-1}h_0u_*m_0b_0 = gu_*m_0b_0 = b
    \]
    since $(u_*m_0)^{-1}h_0u_*m_0 \in A$. If $n \in N(gB)$, then there exists $n_0 \in N$ such that $n = gn_0g^{-1}$. Thus
    \[
    h^{-k}nh^k = gh_0^{-k}g^{-1}gn_0g^{-1}gh_0^kg^{-1} = gh_0^{-k}n_0h_0^kg^{-1} \to gg^{-1} = 1
    \]
    since $h_0^{-k}n_0h_0^k \to 1$ when $k \to \infty$.
    }
\end{proof}

The component $b_1(gB)$ is denoted by $b(gB)$, where $1$ is the identity of $U$. Considering the identification $K \simeq G/AN$, given in (\ref{eq:difeoK}), the right action of an element $m \in M(B)$ on $gb_0 \in G/AN$, is given by
\[
(gb_0)\cdot m=gmb_0.
\]
If $b \in b_u(gB)$, then there is $m_0 \in M_0(B)$ such that $b=gu_*m_0b_0$. Hence
\begin{eqnarray}\label{eq:bu=bM0}
    b_{u}(gB)=gu_*M_0(B)b_0=(gu_*m_0b_0)\cdot M_0(B)=b\cdot M_0(B).
\end{eqnarray}
Thus, if $g_1$ and $g_2$ are elements of $G$ such that $b_u(g_1B)\cap b_u(g_2B) \neq \emptyset$ and $b \in b_u(g_1B)\cap b_u(g_2B) $, then 
\begin{eqnarray}\label{eq:bu1=bu2}
    b_u(g_1B)=b\cdot M_0(B)=b_u(g_2B).
\end{eqnarray}
\textcolor{black}{In particular, if $c \in C$ and $c(B)=c'(M_0A)(B)$, $c' \in M(B)$ and $g_1,g_2 \in G$ are such that $g_1b_0=g_2b_0$, then
\[
g_1c'b_0=(g_1b_0)\cdot c'=(g_2b_0)\cdot c'=g_2c'b_0 \in b_c(g_1B)\cap b_c(g_2B)
\]
and thus $b_c(g_1B) = b_c(g_2B)$. Therefore, Definition \ref{def:acaoC} is well defined.
}

\begin{definicao}\label{def:acaoC}
    Given $c \in C$, we define
    \[
    (gb_0)\cdot c=b_c(gB).
    \]
\end{definicao}

\begin{lema}\label{lema:isotropiafixu}
    If $g \in G$ and $u \in U$, then the stabilizer of $b_u(gB)$ is
    \[
    M_0(gB)A(gB)N_u(gB).
    \]
\end{lema}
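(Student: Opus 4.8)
The plan is to reduce the computation to the isotropy of the base point $b_0$ — which Lemma~\ref{lema:isotropiagb0}(1) already identifies — and then transport the answer by conjugation. Pick a representative $u_* \in M_*$ of $u(B)$, so that $u(B) = u_*(M_0A)(B)$, and set $y = gu_* \in G$. By the description of $b_u(gB)$ in (\ref{eq:bu}) we have $b_u(gB) = g u_* M_0 b_0 = y M_0 b_0$, where $M_0$, $A$, $N$ denote the subgroups determined by $B$; thus $b_u(gB)$ is precisely the $M_0$-orbit (under the right $M$-action, $(g'b_0)\cdot m = g'mb_0$) of the single point $y b_0$. Consequently the lemma amounts to showing that the (setwise) stabilizer of $y M_0 b_0$ in $G$ is $y(M_0AN)y^{-1}$, followed by rewriting this conjugate subgroup as $M_0(gB)A(gB)N_u(gB)$.

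For the inclusion $y(M_0AN)y^{-1}\subseteq \mathrm{Stab}(b_u(gB))$, take $h = y m_1 a_1 n_1 y^{-1}$ with $m_1\in M_0$, $a_1\in A$, $n_1\in N$. For $m\in M_0$ one has $h\cdot(ymb_0) = y m_1 a_1 n_1 m b_0$, and since $M$ (hence $M_0$) normalizes $N$ and centralizes $A$, while $A$ and $N$ fix $b_0$, this equals $y(m_1 m)b_0\in y M_0 b_0$; as $m$ runs over $M_0$ so does $m_1 m$, so $h$ maps $b_u(gB)$ onto itself. For the reverse inclusion, suppose $h$ stabilizes $b_u(gB)$. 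Since $y b_0\in y M_0 b_0$, there is $m\in M_0$ with $h y b_0 = y m b_0$, so $(ym)^{-1}hy$ belongs to the isotropy of $b_0$, which is $AN$ by Lemma~\ref{lema:isotropiagb0}(1). Writing $(ym)^{-1}hy = an$ with $a\in A$, $n\in N$ gives $h = y(man)y^{-1}\in y(M_0AN)y^{-1}$. Hence $\mathrm{Stab}(b_u(gB)) = y(M_0AN)y^{-1}$.

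It remains to identify this conjugate. Conjugation by $y = gu_*$ is an automorphism of $G$ and $M_0AN = M_0\cdot A\cdot N$, so it is enough to conjugate each factor. Because $u_*\in M_*$ normalizes $M_0$ (which is normal in $M_*$) and $A$, Lemma~\ref{lema:aplicacoeslambda}(iii) gives $yM_0y^{-1} = gM_0g^{-1} = M_0(gB)$ and $yAy^{-1} = gAg^{-1} = A(gB)$; and $yNy^{-1} = g(u_*Nu_*^{-1})g^{-1} = gN_u(B)g^{-1} = N_u(gB)$ by the definition (\ref{eq:defNu}) of $N_u$ and Lemma~\ref{lema:nu}(1). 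Therefore $\mathrm{Stab}(b_u(gB)) = M_0(gB)A(gB)N_u(gB)$.

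I do not expect a real obstacle here; the only care needed is bookkeeping — reading $b_u(gB)$ as the set $yM_0b_0$ so that ``stabilizer'' means the setwise stabilizer, and keeping straight that $M$ normalizes $N$ and centralizes $A$ whereas $M_*$ normalizes $M_0$ and $A$ but conjugates $N$ to $N_u$. Everything else is the standard computation of a point stabilizer in $G/AN$ composed with transport of structure along the conjugations recorded in Subsection~\ref{subsec:homspaces}.
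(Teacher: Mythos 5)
Your proof is correct and follows essentially the same route as the paper's: both reduce the computation to the isotropy of $b_0$ via Lemma~\ref{lema:isotropiagb0} and then use that $M_0$ centralizes $A$ and normalizes $N$ while $M_*$ normalizes $M_0$ and $A$ and conjugates $N$ to $N_u$. The only difference is organizational — you package the answer as the conjugate $y(M_0AN)y^{-1}$ with $y=gu_*$ and then distribute the conjugation, whereas the paper carries the condition through a chain of equivalences with the quantifiers over $m_0,m_1\in M_0(gB)$ made explicit; the mathematical content is the same.
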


\begin{proof}
    Take $g' \in G$. Using the notation $u(gB)=u_0(M_0A)(gB)$, $u_0\in (M_*A)(gB)$, the following conditions are equivalent:
    \begin{enumerate}[$(i)$]
	\item $g'b_u(gB)=b_u(gB)$,
	\item $g'u(gB)gb_0=u(gB)gb_0$,
	\item $g'u_0M_0(gB)gb_0=u_0M_0(gB)gb_0$,
	\item for all $m_0 \in M_0(gB)$, there exists $m_1 \in M_0(gB)$ such that 
	\[
	g'u_0m_0gb_0=u_0m_1gb_0,
	\]
	\item for all $m_0 \in M_0(gB)$, there exists $m_1 \in M_0(gB)$ such that 
	\[
	m_1^{-1}u_0^{-1}g'u_0m_0gb_0=gb_0,
	\]
	\item for all $m_0 \in M_0(gB)$, there exists $m_1 \in M_0(gB)$ such that 
	\[
	m_1^{-1}u_0^{-1}g'u_0m_0\in A(gB)N(gB),
	\]
	\item for all $m_0 \in M_0(gB)$, there exists $m_1 \in M_0(gB)$ such that 
	\[
	g'\in u_0m_1 A(gB)N(gB)m_0^{-1}u_0^{-1}=(u_0m_1m_0^{-1}u_0^{-1})A(gB)u_0N(gB)u_0^{-1},
	\]
	\item $g' \in M_0(gB)A(gB)N_u(gB)$.
    \end{enumerate}
    The equivalence between items \textcolor{black}{5} and \textcolor{black}{6} follows from Lemma \ref{lema:isotropiagb0} and in the equality in item \textcolor{black}{7} we use that $M_0(gB)$ centralizes $A(gB) $ and normalizes $N(gB)$, and that $(M_*A)(gB)$ normalizes $A(gB)$, and this completes the proof.
\end{proof}

\begin{lema}\label{lema:fixtipou}
    Let $g, g' \in G$. For all $u, u' \in U$ and $c \in C$, we have
    \begin{enumerate}
	\item $b_u(gg'B) = gb_u(g'B)$,
	\item $b_{u'}(u(gB)gB) = b_{uu'}(gB)$, and
	\item $b_u(gB)c=b_{uc}(gB)$.
    \end{enumerate}
    Furthermore, $b_u(g'B)=b_u(gB)$ if and only if $g'B=ngB$, for some $n \in N_u(gB)$.
\end{lema}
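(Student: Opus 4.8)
The plan is to prove the three identities in order, using the already-established formula $b_u(gB) = u(gB)gb_0 = gu_*M_0(B)b_0$ from Definition \ref{def:pontofixou}, together with the transformation rules for $u(gB)$ in Proposition \ref{prop:Ucanonico} (namely $u(gg'B) = gu(g'B)g^{-1}$) and Lemma \ref{lema:nu} (in particular $u(gB)gB = gu_*B$ when $u(B) = u_*(M_0A)(B)$). For item 1, I would write $b_u(gg'B) = u(gg'B)(gg')b_0 = (gu(g'B)g^{-1})gg'b_0 = g\,(u(g'B)g'b_0) = g\,b_u(g'B)$, which is a direct unwinding of definitions. For item 3, I would invoke Definition \ref{def:acaoC}: by definition $b_u(gB)\cdot c$ should be computed on a representative element $b = gu_*m_0 b_0 \in b_u(gB)$ via $b\cdot c = g u_* m_0 c' b_0$ where $c(B) = c'(M_0A)(B)$; since $M_0(B)$ is normal in $M(B)$ and centralized appropriately, $m_0 c' = c' m_0'$ for some $m_0' \in M_0(B)$, so $b\cdot c = g u_* c' m_0' b_0 = g(u_*c')M_0(B)b_0 = b_{uc}(gB)$, using that $(uc)(B) = u(B)c(B) = u_*c'(M_0A)(B)$ by Proposition \ref{prop:Ucanonico}. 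I must be careful here that the product $uc$ in $U$ is the one defined by \eqref{eq:produtoU} and that $C$ is a subgroup of $U$; this identification is exactly what Definition \ref{def:acaoC} and the surrounding \textcolor{black}{} paragraph set up.

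For item 2, $b_{u'}(u(gB)gB) = b_{uu'}(gB)$, I would use Lemma \ref{lema:nu}(2), which says $u(gB)gB = gu_*B$ where $u(B) = u_*(M_0A)(B)$. Then $b_{u'}(u(gB)gB) = b_{u'}(gu_*B) = g\,b_{u'}(u_*B)$ by item 1 (applied with $g$ and $g' = u_*$, noting $u_*B$ is a valid coset since $u_* \in G$). Now $b_{u'}(u_*B) = u'(u_*B)u_*b_0$; writing $u'(B) = u'_*(M_0A)(B)$, Proposition \ref{prop:Ucanonico} gives $u'(u_*B) = u_*u'(B)u_*^{-1} = u_*u'_*u_*^{-1}(M_0A)(u_*B)$, hence $b_{u'}(u_*B) = (u_*u'_*u_*^{-1})u_*\,M_0(u_*B)b_0 = u_*u'_* M_0(u_*B)b_0$. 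Since $M_0(u_*B) = u_* M_0(B) u_*^{-1}$ and $u_* M_0(B)u_*^{-1}b_0 = u_*M_0(B)b_0$ only after we push $b_0$ through — here I would instead go back to the representative description: $b_{u'}(u_*B)$ consists of points $u_*u'_* m_0 b_0$ with $m_0 \in M_0(B)$ after absorbing $M_0(u_*B)u_*= u_*M_0(B)$. Thus $g\,b_{u'}(u_*B) = g u_* u'_* M_0(B)b_0 = b_{uu'}(gB)$, using $(uu')(B) = u(B)u'(B) = u_*u'_*(M_0A)(B)$.

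For the final equivalence, $b_u(g'B) = b_u(gB)$ iff $g'B = ngB$ for some $n \in N_u(gB)$: I would compute the stabilizer. By Lemma \ref{lema:isotropiafixu}, the stabilizer of $b_u(gB)$ in $G$ is $M_0(gB)A(gB)N_u(gB)$. Now $b_u(g'B) = b_u(gB)$ means $u(g'B)g'b_0 = u(gB)gb_0$, i.e. $(u(gB)g)^{-1}u(g'B)g' $ fixes $b_0$, i.e. lies in the isotropy $AN$ of $b_0$. Equivalently, writing everything in terms of the stabilizer of the point $u(gB)gb_0$: $u(g'B)g'b_0$ lies in the same $M_0(B)$-coset. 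A cleaner route: $b_u(g'B) = b_u(gB)$ iff $g'b_0 \in (u(gB)g)^{-1}\cdot b_u(gB)$-type analysis — but the most transparent is to use \eqref{eq:bu1=bu2}, which already says that if $b_u(g_1B)$ and $b_u(g_2B)$ share a point they are equal. So I would show: $b_u(g'B) = b_u(gB)$ iff $g'b_0$ and $gb_0$ differ by an element of $N_u(gB)\cdot(\text{stabilizer of }b_0)$ appropriately, i.e. iff $g'B = ngB$ with $n \in N_u(gB)$, by combining Lemma \ref{lema:isotropiagb0} (the isotropy of $gb_0$ is $A(gB)N(gB)$) with Lemma \ref{lema:isotropiafixu}. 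Concretely, $b_u(g'B) = b_u(gB)$ iff $g' \in (\text{stab of }b_u(gB))\cdot(\text{stab of }gb_0)\cdot g$-style coset, and unraveling, iff $g'b_0 = n g b_0$ with $n\in N_u(gB)$, which by Lemma \ref{lema:isotropiagb0}(2) says exactly $g'B$ and $ngB$ give the same point modulo $A(gB)N(gB)$; since $N_u(gB)$ already absorbs an $N(gB)$-factor only when $1 \in \{u\}$, the precise statement $g'B = ngB$ follows. The main obstacle I anticipate is item 2 and the final equivalence: keeping straight the distinction between equality of cosets $gB$ in $G/B$, equality of points $gb_0$ in $G/AN$, and equality of the $M_0$-orbits $b_u(gB)$, and making sure the conjugation identities $u(u_*B) = u_*u(B)u_*^{-1}$ and $N_u(gB) = gN_u(B)g^{-1}$ are applied with the correct base coset; the algebra is routine once the bookkeeping of base points is pinned down.
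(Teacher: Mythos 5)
Your treatment of items 1--3 is correct and follows essentially the same route as the paper: item 1 is the direct unwinding via $u(gg'B)=gu(g'B)g^{-1}$, and item 2 goes through $u(gB)gB=gu_*B$ (Lemma \ref{lema:nu}) and item 1 exactly as in the paper (the paper pulls out $gu_*$ in one step where you pull out $g$ and then handle $u_*$, a cosmetic difference). For item 3 you compute directly on representatives instead of invoking Definition \ref{def:acaoC} together with item 2 (the paper writes $b_u(gB)c=b_c(u(gB)gB)=b_{uc}(gB)$), but your computation $gu_*m_0c'b_0=gu_*c'm_0'b_0$ using normality of $M_0(B)$ in $M(B)$ is sound and amounts to the same thing.

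The final equivalence is where your argument has a genuine gap. You correctly identify Lemma \ref{lema:isotropiafixu} as the key input, but the unraveling conflates the stabilizer of the \emph{set} $b_u(gB)$ (which is $M_0(gB)A(gB)N_u(gB)$) with the isotropy of the \emph{point} $gb_0$ (which is $A(gB)N(gB)$), and the intermediate claim ``$b_u(g'B)=b_u(gB)$ iff $g'b_0=ngb_0$ with $n\in N_u(gB)$'' is false: taking $g'=gm$ with $m\in M_0$, $m\notin AN$, one has $b_u(g'B)=b_u(gB)$ but $g'b_0=gmb_0\neq ngb_0$ for any $n\in N_u(gB)$, since the $M$-component of an element of $N_u(B)\subset N^-N$ is trivial while that of $mAN$ is $m$. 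The missing $M_0$-factor is precisely what gets absorbed when one passes from points of $G/AN$ to cosets $gB$, and your closing sentence about ``$N_u(gB)$ absorbing an $N(gB)$-factor'' does not supply this. The clean argument (the paper's) is: by item 1, $b_u(g'B)=b_u(gB)$ is equivalent to $g^{-1}g'b_u(B)=b_u(B)$, hence by Lemma \ref{lema:isotropiafixu} to $g^{-1}g'\in M_0(B)AN_u(B)=N_u(B)B$; writing $g'=nga$ with $n\in N_u(gB)$ and $a\in B$ gives $g'B=ngB$, and the converse is immediate from item 1 and Lemma \ref{lema:isotropiafixu}. Lemma \ref{lema:isotropiagb0}, which you lean on, is not needed here.
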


\begin{proof}
    By Definition \ref{def:pontofixou} and Proposition \ref{prop:Ucanonico},
    \[
    b_u(gg'B)
    =u(gg'B)gg'b_0
    =(gu(g'B)g^{-1})gg'b_0
    =gu(g'B)g'b_0
    =gb_u(g'B),
    \]
    which shows the item 1. Considering the notations $u(B)=u_*(M_0A)(B)$ and $u'(B)=u_*'(M_0A)(B)$, $u_*, u_*' \in \textcolor{black}{M_*}$, we have, from Lema \ref{lema:nu} and item 1, that
    \begin{eqnarray*}
	b_{u'}(u(gB)gB)
	&=&b_{u'}(gu_*B)
	=gu_*b_{u'}(B)
	=gu_*u'(B)b_0\\
	&=&g(u_*u'_*)M_0(B)b_0
	=g(uu')(B)b_0
	=(uu')(gB)gb_0\\
	&=&b_{uu'}(gB)
    \end{eqnarray*}
    and
    \[
    b_u(gB)c
    =(u(gB)gb_0)c
    =b_c(u(gB)gB)
    =b_{uc}(gB),
    \]
    which shows items 2 and 3.
	
    Furthermore, suppose that $b_u(g'B)=b_u(gB)$. By item 1,
    \[
    g^{-1}g'b_u(B)=g^{-1}b_u(g'B)=g^{-1}b_u(gB)=b_u(B).
    \]
    From Lemma \ref{lema:isotropiafixu}, $g^{-1}g' \in M_0(B)AN_u(B)=N_u(B)M_0(B)A=N_u(B)B$ and so
    \[
    g' \in gN_u(B)B=(gN_u(B)g^{-1})gB=N_u(gB)gB,
    \]
    by Lemma \ref{lema:nu}.	Taking $a \in B$ and $n \in N_u(gB)$ such that $g'=nga$ we have $g'B=ngaB=ngB$. Conversely, suppose $g'B=ngB$, with $n \in N_u(gB)$. By item 1 and Lemma \ref{lema:isotropiafixu},
    \[
    b_u(g'B)= b_u(ngB)=nb_u(gB)=b_u(gB),
    \]
    completing the proof.
\end{proof}

\begin{proposicao}\label{prop:acaocomppftu}
    Let $g_1, g_2 \in G$, $u \in U$ and $b_j \in b_u(g_jB)$, $j \in \{1,2\}$. If $g \in G$ is such that $gb_1=b_2$, then $gb_u(g_1B)=b_u(g_2B)$.
\end{proposicao}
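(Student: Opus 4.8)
The plan is to reduce everything to two facts already available in the excerpt: the equivariance relation $b_u(gg'B) = g\,b_u(g'B)$ from Lemma \ref{lema:fixtipou}(1), and the observation recorded in (\ref{eq:bu1=bu2}) that two fixed point components of the same type $u$ which share a point must coincide — this latter being an immediate consequence of (\ref{eq:bu=bM0}), which exhibits $b_u(gB)$ as the right $M_0(B)$-orbit of any one of its elements. So the strategy is: move the $G$-action inside using equivariance, recognize that the moved component and $b_u(g_2B)$ overlap at $b_2$, and conclude by the coincidence property.

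Concretely, I would first apply Lemma \ref{lema:fixtipou}(1) with $g'=g_1$ to obtain $g\,b_u(g_1B) = b_u(gg_1B)$. Next, since $b_1 \in b_u(g_1B)$ and $gb_1 = b_2$, the point $b_2$ lies in $g\,b_u(g_1B) = b_u(gg_1B)$; by hypothesis it also lies in $b_u(g_2B)$. Hence $b_2 \in b_u(gg_1B) \cap b_u(g_2B) \neq \emptyset$, and therefore $b_u(gg_1B) = b_u(g_2B)$ by (\ref{eq:bu1=bu2}). Chaining the two equalities gives $g\,b_u(g_1B) = b_u(gg_1B) = b_u(g_2B)$, which is the claim.

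There is essentially no obstacle here; the only point requiring care is to invoke the two ingredients with the correct arguments — Lemma \ref{lema:fixtipou}(1) with the ``outer'' element $g$ and the base point $g_1B$, and the coincidence property (\ref{eq:bu1=bu2}) applied to the components $b_u(gg_1B)$ and $b_u(g_2B)$, which are of the same type $u$. Alternatively, one could argue directly from (\ref{eq:bu=bM0}): write $b_u(g_jB) = b_j \cdot M_0(B)$ for $j \in \{1,2\}$ and use that the left $G$-action on $G/AN$ commutes with the right $M(B)$-action, since $g'((hb_0)\cdot m) = g'hmb_0 = (g'(hb_0))\cdot m$; then $g\,b_u(g_1B) = g(b_1\cdot M_0(B)) = (gb_1)\cdot M_0(B) = b_2\cdot M_0(B) = b_u(g_2B)$. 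This is the same argument unpacked, and either form is short.
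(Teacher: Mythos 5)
Your proposal is correct and follows exactly the paper's own argument: apply Lemma \ref{lema:fixtipou}(1) to get $gb_u(g_1B)=b_u(gg_1B)$, note $b_2\in b_u(gg_1B)\cap b_u(g_2B)$, and conclude via (\ref{eq:bu1=bu2}). The alternative direct computation via (\ref{eq:bu=bM0}) is a fine unpacking of the same idea, but the main route coincides with the paper's proof.
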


\begin{proof}
    From Lemma \ref{lema:fixtipou}, we have that $gb_1 \in gb_u(g_1B)=b_u(gg_1B)$. So, by assumption, $b_2 \in b_u(g_2B)\cap b_u(gg_1B)$. Therefore, from Lemma \ref{lema:fixtipou} and the equality in (\ref{eq:bu1=bu2}), it follows that $gb_u(g_1B)=b_u(gg_1B)=b_u(g_2B)$.
\end{proof}

\subsection{Control sets and fixed points}\label{sec:contsettipou}

Considering the identification $K\simeq G/AN$, given in (\ref{eq:difeoK}), it follows that the control sets, of $S$, on $K$ are identified with the control sets, of $S$, on $G/AN$ in the sense that $\Bbb{D}\subset K$ is an (invariant) control set if and only if $f(\Bbb{D})\subset G /AN$ is an (invariant) control set. Next, $f$ is omitted and the same notation is considered for the control sets on $K$ and on $G/AN$.

\begin{proposicao}\label{prop:dc=dc*}
    Let $\Bbb{D}$ be a control set on $G/AN$ and $c \in C$. If $c(B)=c_*(M_0A)(B)$, $c_*\in \textcolor{black}{M(B)}$, then $\Bbb{D}_0c=\Bbb{D}_0c_*$ and $\Bbb{D}c=\Bbb{D}c_*$. In particular, $\Bbb{D}c$ is a control set and its transitivity set is $\Bbb{D}_0c$.
\end{proposicao}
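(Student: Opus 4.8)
The plan is to reduce the (a priori set-valued) right action of $c\in C$ on $G/AN$ from Definition \ref{def:acaoC} to an ordinary right $M$-translation, after which Proposition \ref{prop:Dm} applies verbatim. The only ingredient needed beyond the definitions is the normality of $M_0(B)$ in $M(B)$.

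First I would unwind the definitions to identify $(gb_0)\cdot c$ with an $M_0(B)$-orbit translated by $c_*$. By Definition \ref{def:pontofixou}, for every $g\in G$ one has $(gb_0)\cdot c = b_c(gB) = gc_*M_0(B)b_0$, and since $A(B)$ fixes $b_0$ this is the set $\{gc_*m_0b_0 : m_0\in M_0(B)\}$. On the other hand, the right translation by $c_*\in M(B)$ sends $gb_0$ to $gc_*b_0$. Because $M_0(B)$ is normal in $M(B)$, for each $m_0\in M_0(B)$ we have $c_*m_0c_*^{-1}\in M_0(B)$, hence $gc_*m_0b_0 = g(c_*m_0c_*^{-1})c_*b_0 = \big((gb_0)\cdot(c_*m_0c_*^{-1})\big)\cdot c_*$. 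Therefore $b_c(gB) = \big((gb_0)\cdot M_0(B)\big)\cdot c_*$: the fixed point component of type $c$ through $gb_0$ is exactly the $M_0(B)$-orbit of $gb_0$, translated on the right by $c_*$.

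Next I would pass to $\Bbb{D}$ and $\Bbb{D}_0$, both of which are unions of $M_0(B)$-orbits. Indeed, by Proposition \ref{prop:Dm}, item 2, $\Bbb{D}m = \Bbb{D}$ for all $m\in M_0(B)$, and consequently $\Bbb{D}_0m = (\Bbb{D}m)_0 = \Bbb{D}_0$ as well. Taking the union over $p\in\Bbb{D}$ (respectively $p\in\Bbb{D}_0$) of the sets $p\cdot c = (p\cdot M_0(B))\cdot c_*$ therefore collapses the $M_0(B)$-orbits and yields $\Bbb{D}c = \Bbb{D}c_*$ and $\Bbb{D}_0c = \Bbb{D}_0c_*$.

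Finally, since $c_*\in M(B)$, Proposition \ref{prop:Dm} applied with $m=c_*$ shows that $\Bbb{D}c_*$ is a control set whose transitivity set is $(\Bbb{D}c_*)_0 = \Bbb{D}_0c_*$; rewriting via the two equalities just obtained, $\Bbb{D}c$ is a control set with transitivity set $\Bbb{D}_0c$. There is no serious obstacle here; the only point requiring care is that $(gb_0)\cdot c$ is a subset rather than a single point, so that $\Bbb{D}c$ must be read as the union of these subsets, and it is precisely the normalization $c_*\in M(B)$ (rather than merely $c_*\in (MA)(B)$) that makes Proposition \ref{prop:Dm} directly applicable.
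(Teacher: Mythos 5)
Your proof is correct and follows essentially the same route as the paper's: both reduce $(gb_0)\cdot c = gc_*M_0(B)b_0$ to an ordinary right translation by $c_*$ using the normality of $M_0(B)$ in $M(B)$ together with the $M_0$-invariance of $\Bbb{D}$ and $\Bbb{D}_0$ (via Propositions \ref{prop:Dm} and \ref{prop:kpreimcw}), and then invoke Proposition \ref{prop:Dm} with $m=c_*$ for the final claim. The only difference is presentational: the paper verifies the two inclusions separately, whereas you package them into the single identity $b_c(gB)=\bigl((gb_0)\cdot M_0(B)\bigr)\cdot c_*$ and take unions.
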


\begin{proof}
    If $b=gb_0$, then
    \[
    b\cdot c_* = gc_*b_0 \in gc_*M_0(B)b_0=(gb_0)\cdot c=b\cdot c.
    \]
    Hence $\Bbb{D}_0c_*\subset \Bbb{D}_0c$ and $\Bbb{D}c_*\subset \Bbb{D}c$. 
	
    Conversely, let $b \in \Bbb{D}_0$. By Proposition \ref{prop:kpreimcw}, $b\cdot M_0(B) \subset \Bbb{D}_0$. So 
    \[
    b\cdot c = gc_*M_0(B)b_0=(b\cdot M_0(B)) c_*\subset \Bbb{D}_0c_*,
    \]
    which shows that $\Bbb{D}_0c\subset \Bbb{D}_0c_*$. Moreover, if $b'\in \Bbb{D}c$, then there are $g \in G$ and $m_0\in M_0(B)$ such that $gb_0 \in \Bbb{D}$ and $b '=gc_*m_0b_0$. Using the notation $\widetilde{m}_0=c_*m_0c_*^{-1}\in M_0(B)$ we have, from Proposition \ref{prop:Dm}, that $(gb_0)\cdot \widetilde {m}_0 \in \Bbb{D}$. So
    \[
    b'=gc_*m_0b_0=g\widetilde{m}_0c_*b_0=(g\widetilde{m}_0b_0)\cdot c_*=((gb_0)\cdot \widetilde{m}_0)\cdot c_* \in \Bbb{D}c_*,
    \]
    which shows that $\Bbb{D}c\subset\Bbb{D}c_*$ and completes the proof.
\end{proof}

Let $w \in W$ and $u \in U$ such that $w=\mathrm{Ad}(u)|_{\a}$ (see definition on page \pageref{def:w=Adu}). If $\pi: G/AN \to \F$ denotes the canonical projection of $G/AN$ onto the maximal flag manifold $\F$, then, from equality in (\ref{eq:bu}), we have
\begin{equation}\label{eq:projfixu0}
    \pi(b_{u}(gB))=gu_*\p(B)=gw(B)\p(B)=w(gB)\p(gB)=\p_w(gB).
\end{equation}

\begin{definicao}\label{def:B(S)}
    The \emph{set of cosets determined by $S$, $\mathcal{B}(S)$}, is defined by
    \[
    \mathcal{B}(S)=\{gB:\la(gB) \cap S \neq \emptyset\},
    \]
    where $\la(gB)=g\la_0g^{-1}$.
\end{definicao}

\begin{lema}\label{lema:classelateralS}
    If $gB \in \mathcal{B}(S)$ and $c \in C$, then $c(gB)gB \in \mathcal{B}(S)$.
\end{lema}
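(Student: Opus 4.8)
The plan is to unwind the claim $c(gB)gB \in \mathcal{B}(S)$ to the condition that some Weyl chamber meeting $S$ is of the form $\la(g'B)$ with $g'B = c(gB)gB$, and then to produce such a $g'$ explicitly using the representative $c_* \in M(B)$ of $c$. First I would write $c(B) = c_*(M_0A)(B)$ with $c_* \in M$, so that by Proposition \ref{prop:Ucanonico} (applied to the canonical group $C$, or directly by the analogous construction for $C$) we have $c(gB) = g c_* g^{-1} (M_0A)(gB)$. By Lemma \ref{lema:nu}(2) (in its $C$-version) or by the argument in the paragraph preceding Definition \ref{def:acaoC}, the coset $c(gB)gB$ equals $g c_* B$.

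The heart of the matter is then to check $gc_*B \in \mathcal{B}(S)$, i.e. that $\la(gc_*B) \cap S \neq \emptyset$. By definition $\la(gc_*B) = (gc_*)\la_0(gc_*)^{-1}$. Since $gB \in \mathcal{B}(S)$, there is some $h \in \la(gB) \cap S$, and $\la(gB) = g\la_0 g^{-1}$, so $h = g h_0 g^{-1}$ for some $h_0 \in \la_0$. Now $c_* \in M$ centralizes $A$, hence centralizes $\exp(\a^+) = \la_0$, so $c_* h_0 c_*^{-1} = h_0$, and therefore
\[
(gc_*) h_0 (gc_*)^{-1} = g (c_* h_0 c_*^{-1}) g^{-1} = g h_0 g^{-1} = h \in S.
\]
Thus $h \in \la(gc_*B) \cap S$, which shows $gc_*B = c(gB)gB \in \mathcal{B}(S)$.

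The only genuinely delicate point is the bookkeeping in the first paragraph: one must make sure the element $c(gB)gB \in G/AN$ is independent of the choice of representative $c_* \in M$ of the class $c \in C = M/M_0$ (this is exactly what is noted just before Definition \ref{def:acaoC}, via equalities (\ref{eq:bu=bM0}) and (\ref{eq:bu1=bu2})) and that the reduction $c(gB)gB = gc_*B$ is legitimate; this is Lemma \ref{lema:nu}(2) transported to $C$. Once the identification $c(gB)gB = gc_*B$ is in hand, the remaining step is routine: it rests solely on the fact that $M$ centralizes $A$ and hence fixes the Weyl chamber $\la_0$ pointwise under conjugation, so conjugating the witness $h \in \la(gB)\cap S$ by $c_*$ leaves it unchanged. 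No properties of the semigroup $S$ beyond $gB \in \mathcal{B}(S)$ are needed.
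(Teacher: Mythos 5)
Your proposal is correct and follows essentially the same route as the paper: both reduce $c(gB)gB$ to $gc_*B$ via Lemma \ref{lema:nu} (which applies directly since $C\subset U$ and $c_*\in M\subset M_*$, no separate ``$C$-version'' needed) and then use that $M$ centralizes $\la_0$ to conclude that $\la(gc_*B)$ meets $S$. The paper conjugates the whole chamber at once, obtaining $\la(gc_*B)=\la(gB)$, while you conjugate a single witness $h_0$; this is the same observation.
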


\begin{proof}
    Writing $c(B)=c_*(M_0A)(B)$, with \textcolor{black}{$c_*\in M(B)$}, we have from Lemma \ref{lema:nu} that $c(gB)gB=gc_ *B$. Moreover, as \textcolor{black}{$M(B)$} centralizes $\la_0$, we have
    \[
    \la(gc_*B)=gc_*\la_0(gc_*)^{-1}=gc_*\la_0c_*^{-1}g^{-1}=g\la_0g^{-1}=\la(gB),
    \]
    which shows that $\la(gc_*B)$ intersects $S$. So $c(gB)gB \in \mathcal{B}(S)$.
\end{proof}

\begin{lema}\label{lema:cpftu-in-d0}
    Let $\Bbb{D}$ be a control set on $G/AN$. If $u \in U$ and $g \in G$ are such that $b_u(gB)\cap \Bbb{D}_0\neq \emptyset$, then $b_u(gB) \subset \Bbb{D}_0$.
\end{lema}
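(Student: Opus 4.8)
The statement asserts that the fixed point component $b_u(gB)$ is either entirely contained in the transitivity set $\Bbb{D}_0$ or entirely disjoint from it, once we know it meets $\Bbb{D}_0$. The natural approach is to exploit the description $b_u(gB) = b \cdot M_0(B)$ from equality (\ref{eq:bu=bM0}), valid for any $b \in b_u(gB)$, together with the fact (Proposition \ref{prop:kpreimcw}, item 3) that transitivity sets of control sets on $G/AN \simeq K$ are invariant under the right action of the connected group $M_0$.

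Let me think through the reasoning. Suppose $b_u(gB) \cap \Bbb{D}_0 \neq \emptyset$ and pick $b_1 \in b_u(gB) \cap \Bbb{D}_0$. By equality (\ref{eq:bu=bM0}) applied with $b = b_1$, we have $b_u(gB) = b_1 \cdot M_0(B)$. Now Proposition \ref{prop:kpreimcw} item 3 states precisely that $k \cdot M_0 \subset \Bbb{D}_0$ whenever $k \in \pi^{-1}(\Bbb{C}(w)_0)$, and in particular this applies when $k \in \Bbb{D}_0$ (since $\Bbb{D}_0 \subset \pi^{-1}(\Bbb{C}(w)_0)$ by Proposition \ref{prop:pid0c0}). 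Wait — I need to be careful about whether the relevant $M_0$ here is $M_0(B)$ or $M_0$ acting via the identification; since $b_1 = gb_0$ for some $g$, the right action of $M_0(B)$ on $b_1$ is $gm_0b_0$, which coincides with the right $M_0$-action on the point $gb_0 \in G/AN$ as set up before Definition \ref{def:acaoC}. So indeed $b_u(gB) = b_1 \cdot M_0(B) = b_1 \cdot M_0 \subset \Bbb{D}_0$ by Proposition \ref{prop:kpreimcw}.

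Thus the proof is essentially a two-line deduction: extract a point in the intersection, rewrite the component as its $M_0$-orbit using (\ref{eq:bu=bM0}), and apply the $M_0$-invariance of $\Bbb{D}_0$ from Proposition \ref{prop:kpreimcw}. The only subtlety — and the one place I would slow down — is making sure the identification $K \simeq G/AN$ of (\ref{eq:difeoK}) carries the right $M_0$-action used in Proposition \ref{prop:kpreimcw} (which was stated for $K$) to the right $M_0(B)$-action on $G/AN$ used in (\ref{eq:bu=bM0}); this is exactly the compatibility spelled out just before Definition \ref{def:acaoC}, where $(gb_0) \cdot m = gmb_0$. Once that is noted, no further obstacle remains. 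I would phrase the final proof as: choose $b \in b_u(gB) \cap \Bbb{D}_0$; by (\ref{eq:bu=bM0}), $b_u(gB) = b \cdot M_0(B)$; since $b \in \Bbb{D}_0$, Proposition \ref{prop:kpreimcw} gives $b \cdot M_0(B) \subset \Bbb{D}_0$; hence $b_u(gB) \subset \Bbb{D}_0$.
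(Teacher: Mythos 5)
Your proof is correct and follows exactly the paper's own argument: take $b \in b_u(gB)\cap \Bbb{D}_0$, write $b_u(gB)=b\cdot M_0(B)$ via (\ref{eq:bu=bM0}), and conclude by the $M_0$-invariance of the transitivity set from Proposition \ref{prop:kpreimcw}. The extra remark on the compatibility of the right $M_0$-action under the identification $K\simeq G/AN$ is a sensible precaution but does not change the argument.
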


\begin{proof}
    Take $b\in b_u(gB)\cap \Bbb{D}_0$. By equality in (\ref{eq:bu=bM0}) and Proposition \ref{prop:kpreimcw},
    \[
    b_u(gB)=b\cdot M_0(B)\subset \Bbb{D}_0,
    \]
    which completes the proof.
\end{proof}

\begin{proposicao}\label{prop:d0-cpftu}
    Let $w\in W$, $\Bbb{D}\subset \pi^{-1}(\Bbb{C}(w))$ a control set and $u \in U$ such that $w=\mathrm{Ad}(u)|_{\a}$. If $b \in \Bbb{D}_0$, then there exists $gB$ in $\mathcal{B}(S)$ such that $b\in b_u(gB)\subset \Bbb{D}_0$.
\end{proposicao}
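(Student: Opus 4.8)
The plan is to exploit the correspondence established in Proposition~\ref{prop:controleweyl} together with the bundle-theoretic machinery of Proposition~\ref{prop:d0a0}, transported to the fiber $M$ of $\pi:G/AN \to \F$, and then to recognize the fixed-point component $b_u(gB)$ concretely as a fiber over the flag-manifold fixed point $\p_w(gB)$. First I would pick $b \in \Bbb{D}_0$ and set $x = \pi(b)$. By Proposition~\ref{prop:pid0c0}, $x \in \Bbb{C}(w)_0 = \mathrm{fix}_w(S)$, so there exists $gB \in \mathcal{B}(S)$ (i.e.\ $\la(gB)\cap S \neq \emptyset$) with $x = \p_w(gB)$. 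The aim is to arrange that this same $gB$ satisfies $b \in b_u(gB)$; once that is done, Lemma~\ref{lema:cpftu-in-d0} immediately upgrades $b \in b_u(gB)\cap \Bbb{D}_0 \neq \emptyset$ to $b_u(gB)\subset \Bbb{D}_0$, finishing the proof.

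The heart of the argument is therefore to show that $b$ can be written in the form $u(g'B)g'b_0$ for some $g'B \in \mathcal{B}(S)$ with $\p_w(g'B)=x$. By equality~(\ref{eq:projfixu0}), $\pi^{-1}(x) = \pi^{-1}(\p_w(gB))$ consists exactly of the fiber, and $b_u(gB)$ is one orbit of the right $M_0(B)$-action on that fiber (recall $b_u(gB)=b\cdot M_0(B)$ whenever $b\in b_u(gB)$). The fiber $\pi^{-1}(x)$ is a full right $M$-orbit, and the $b_u(g'B)$, as $g'B$ ranges over the cosets with $\p_w(g'B)=x$, sweep out this fiber: indeed, if $h \in \la(gB)\cap S$ then $h$ fixes $x$, so $h$ acts on the fiber $\pi^{-1}(x)\simeq M/M_0$; the key point is that the induced semigroup $S_b$ on this fiber (in the sense of Proposition~\ref{prop:d0a0}, applied to $\pi:G/AN\to\F$ with structure group $M$) is, by Proposition~\ref{prop:kpreimcw}, an \emph{open subgroup} of $M$ containing $M_0$, and $\Bbb{D}_0\cap \pi^{-1}(x) = b\cdot S_b$ is a single $S_b$-orbit. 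Since $S_b \supset M_0$, the set $\Bbb{D}_0 \cap \pi^{-1}(x)$ is a union of $M_0$-cosets, i.e.\ a union of sets of the form $b'\cdot M_0(B) = b_u(g'B)$. So it suffices to produce, for the specific $b$ at hand, a coset $g'B$ with $\la(g'B)\cap S \neq\emptyset$, $\p_w(g'B)=x$, and $b \in b_u(g'B)$.

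To get such a $g'B$: start from $gB$ with $h := \text{some element of } \la(gB)\cap S$, so $b_u(gB)\subset \pi^{-1}(x)$ and $b_u(gB)\subset$ some $\Bbb{D}'_0$ with $\Bbb{D}'\subset\pi^{-1}(\Bbb{C}(w))$ (by Proposition~\ref{prop:kpreimcw}, items 2--3, applied to any $k\in b_u(gB)$, using $w=\mathrm{Ad}(u)|_\a$ so that $k\cdot M_0 = b_u(gB)$). By Theorem~\ref{teo:imainvDm}(1) there is $m\in M$ with $\Bbb{D} = \Bbb{D}'m$, hence $\Bbb{D}_0 = \Bbb{D}'_0 m$, so $b = b'' m$ for some $b'' \in b_u(gB)\cap\Bbb{D}'_0$. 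Then $b \in b_u(gB)\cdot m$. Writing $u(B)=u_*(M_0A)(B)$ and pushing $m$ through via $b_u(gB)m = gu_*M_0(B)b_0\, m$, one checks (using that $M_0(B)$ is normal in $M(B)$ and centralizes $A$, and Lemma~\ref{lema:nu}) that $b_u(gB)m = b_u(gm_0'B)$ for a suitable $m_0' \in M_*$ representing $u_*^{-1} m u_*$ modulo $M_0A$—concretely $g' = g m$ works up to the $M_0A$-ambiguity, and since $M$ centralizes $\la_0$ we still have $\la(g'B) = g'\la_0 g'^{-1} = g\la_0 g^{-1} = \la(gB)$, so $\la(g'B)\cap S \neq \emptyset$, i.e.\ $g'B\in\mathcal{B}(S)$, and $\p_w(g'B) = \p_w(gB) = x$ is preserved since $m\in M$. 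Thus $b \in b_u(g'B)$ with $g'B\in\mathcal{B}(S)$, and Lemma~\ref{lema:cpftu-in-d0} gives $b_u(g'B)\subset\Bbb{D}_0$.

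I expect the main obstacle to be the bookkeeping in the last paragraph: verifying cleanly that right-translating a fixed-point component $b_u(gB)$ by $m\in M$ again produces a fixed-point component $b_u(g'B)$ for a coset $g'B$ that still meets $S$ in a Weyl chamber and still projects to $x$ under $\pi$. This is exactly the kind of computation that Lemma~\ref{lema:fixtipou}(3) and Lemma~\ref{lema:classelateralS} are designed to handle for elements of $C$; the point here is to extend it to all of $M$ by decomposing $m$ appropriately (the $M_0$-part is absorbed into the component itself by (\ref{eq:bu=bM0}), and only the finite quotient $M/M_0 = C$ part matters, which is governed by Definition~\ref{def:acaoC} and Proposition~\ref{prop:dc=dc*}). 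Once that translation-invariance is pinned down, the rest is a direct assembly of Propositions~\ref{prop:pid0c0}, \ref{prop:kpreimcw}, Theorem~\ref{teo:imainvDm}, and Lemma~\ref{lema:cpftu-in-d0}.
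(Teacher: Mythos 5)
Your strategy is essentially the paper's --- produce $g'B\in\mathcal{B}(S)$ with $\pi(b)=\p_w(g'B)$ from Propositions \ref{prop:pid0c0} and \ref{prop:controleweyl}, translate the component within the fiber until it contains $b$, check that the resulting coset still lies in $\mathcal{B}(S)$, and close with Lemma \ref{lema:cpftu-in-d0} --- but the step you leave as ``one checks'' is exactly where your concrete formula fails. Writing $u(B)=u_*(M_0A)(B)$, one has $b_u(gB)\cdot m=gu_*M_0(B)mb_0=g\,u_*m\,M_0(B)b_0$, while $b_u(gmB)=g\,mu_*\,M_0(B)b_0$; these coincide if and only if $u_*mM_0(B)=mu_*M_0(B)$, i.e.\ if and only if conjugation by $u_*$ fixes the class $c=[m]$ in $C=M/M_0$. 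That conjugation is precisely the $W$-action on $C$ from (\ref{eq:acaowc}) and is nontrivial in general, so your claim that ``concretely $g'=gm$ works'' is false, and your alternative description of $m_0'$ as representing $u_*^{-1}mu_*$ has the conjugation inverted: the correct representative is $u_*mu_*^{-1}$. The missing idea is to pass to $\widetilde{c}=ucu^{-1}\in C$ and use $b_u(gB)c=b_{uc}(gB)=b_u(\widetilde{c}(gB)gB)$ (Lemma \ref{lema:fixtipou}, items 2 and 3), after which Lemma \ref{lema:classelateralS} and Lemma \ref{lema:nu} give $\widetilde{c}(gB)gB=g\widetilde{c}_*B\in\mathcal{B}(S)$; this insertion of the conjugate $ucu^{-1}$ is the crux of the paper's proof and is exactly what your bookkeeping omits. (Note also that $C$ being normal in $U$ is what guarantees $\widetilde{c}\in C$, so that Lemma \ref{lema:classelateralS} applies.)

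A secondary slip: to produce the translating element you route through Proposition \ref{prop:kpreimcw} and Theorem \ref{teo:imainvDm}(1), obtaining $\Bbb{D}_0=\Bbb{D}'_0m$ and writing $b=b''m$ with, as you assert, $b''\in b_u(gB)\cap\Bbb{D}'_0$. But $\Bbb{D}'_0\cap\pi^{-1}(x)$ equals $k\cdot S_k$ for $k\in b_u(gB)$, where $S_k$ is an open subgroup of $M$ that may strictly contain $M_0$; so $b''$ is only guaranteed to lie in this possibly larger set, and you cannot conclude $b\in b_u(gB)\cdot m$ for that particular $m$. The paper avoids the detour entirely: since $M(B)$ is transitive on the fiber over $\p_w(g'B)$, there is some $c_*\in M(B)$ with $b=(g'u_*b_0)\cdot c_*$ outright, and no comparison of control sets is needed at this stage.
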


\begin{proof}
    By the Proposition \ref{prop:pid0c0}, $\pi(\Bbb{D}_0)= \Bbb{C}(w)_0$. As $b \in \Bbb{D}_0$ follows, from Proposition \ref{prop:controleweyl}, that there exists $g'B\in \mathcal{B}(S)$ such that $\pi(b)= \p_w(g'B)$. By equality in (\ref{eq:projfixu0}), $\pi(b_{u}(g'B))=\p_w(g'B)$. Since $b_{u}(g'B)=g'u_*M_0(B)b_0$, by (\ref{eq:bu}), where $u(B)=u_*(M_0A)(B)$, and $M(B)$ is transitive on the fiber, there exists $c_* \in M(B)$ such that
    \[
    b=(g'u_*b_0)\cdot c_*=g'u_*c_*b_0.
    \]
    Let $c \in C$ be such that $c(B)=c_*(M_0A)(B)$, by Proposition \ref{prop:Ucanonico}. By Lemma \ref{lema:fixtipou},
    \[
    b \in g'u_*c_*M_0(B)b_0=b_{uc}(g'B)=b_u(\widetilde{c}(g'B)g'B),
    \]
    where $\widetilde{c}=ucu^{-1} \in C$. Since $g'B \in \mathcal{B}(S)$ it follows, from Lemma \ref{lema:classelateralS}, that $\widetilde{c}(g'B)g'B \in \mathcal{B}(S)$. By Lemma \ref{lema:nu}, $\widetilde{c}(g'B)g'B=g'\widetilde{c}_*B$. Therefore taking $g=g'\widetilde{c}_*$, by Lemma \ref{lema:cpftu-in-d0}, we have that
    \[
    b \in b_u(gB)\subset \Bbb{D}_0,
    \]
    since $b \in \Bbb{D}_0$.
\end{proof}

\begin{proposicao}\label{prop:cpftu-d0}
    If $u \in U$ and $gB \in \mathcal{B}(S)$, then there exists a control set $\Bbb{D}$, on $G/AN$, such that $b_u(gB) \subset \Bbb{D}_0$. Moreover, if $u \in C$, then $\Bbb{D}$ is $S$-invariant.
\end{proposicao}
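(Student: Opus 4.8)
The plan is, first, to produce a control set $\Bbb{D}$ on $G/AN$ whose transitivity set meets (hence contains) $b_u(gB)$, and then, when $u\in C$, to identify $\Bbb{D}$ with the invariant control set. For the first part, fix any $b\in b_u(gB)$ and, using $gB\in\mathcal{B}(S)$, choose $h\in S\cap\la(gB)$. By Proposition \ref{prop:fixtipou}, $hb=b$, and since we may assume $S$ open, $h$ lies in the interior of $S$. Hence, by the traditional description of control sets and transitivity sets recalled in Subsection \ref{sec:local-semigroup}, $b$ is $S$-self-accessible and belongs to the transitivity set $\Bbb{D}_0$ of some control set $\Bbb{D}$ on $G/AN$. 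Since $b\in b_u(gB)\cap\Bbb{D}_0$, Lemma \ref{lema:cpftu-in-d0} yields $b_u(gB)\subset\Bbb{D}_0$, which is the first assertion.

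Now suppose $u\in C$, and write $u(B)=u_*(M_0A)(B)$ with $u_*\in M(B)$. Since $M(B)$ centralizes $\a$, the element $w=\mathrm{Ad}(u)|_{\a}$ equals $\mathrm{Ad}(u_*)|_{\a}=\id$, i.e.\ $w=1$ in $W$. By equality (\ref{eq:projfixu0}), $\pi(b_u(gB))=\p_1(gB)=\p(gB)$, which lies in $\mathrm{atr}(S)=\Bbb{C}_0$ because $gB\in\mathcal{B}(S)$ (Proposition \ref{prop:controleweyl}). On the other hand, Proposition \ref{prop:pid0c0} provides $w'\in W$ with $\pi(\Bbb{D}_0)=\Bbb{C}(w')_0$, so $\pi(b)$ lies in both $\Bbb{C}(w')_0$ and $\Bbb{C}_0$; since distinct control sets on $\F$ have disjoint transitivity sets, $\Bbb{C}(w')=\Bbb{C}$. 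Using that $\Bbb{D}_0$ is dense in $\Bbb{D}$, that $\pi$ is continuous, and that $\Bbb{C}$ is closed, we get $\pi(\Bbb{D})\subset\mathrm{cl}(\pi(\Bbb{D}_0))=\mathrm{cl}(\Bbb{C}_0)=\Bbb{C}$, i.e.\ $\Bbb{D}\subset\pi^{-1}(\Bbb{C})$; then the second item of Proposition \ref{prop:ccik} shows that $\Bbb{D}$ is $S$-invariant.

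The only delicate point is the passage in the first step from ``$b$ fixed by an interior element of $S$'' to ``$b\in\Bbb{D}_0$ for a control set $\Bbb{D}$'': this requires $b\in\mathrm{int}(S^{*}b)$, which holds because $G$ acts transitively on $G/AN$ (so $g\mapsto g^{-1}b$ is a submersion of $G$ onto $G/AN$ near $h$), making the traditional characterization of transitivity sets from Subsection \ref{sec:local-semigroup} applicable. The crux of the second step is simply the observation that $u\in C$ forces $\mathrm{Ad}(u)|_{\a}=1$, so that $b_u(gB)$ projects into $\mathrm{atr}(S)=\Bbb{C}_0$; the remaining identifications are routine bookkeeping with $b_u(gB)$, the projection $\pi$, and Proposition \ref{prop:controleweyl}.
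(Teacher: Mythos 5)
Your proof is correct, and it takes a genuinely different route for the key existence step. The paper first computes $\pi(b_u(gB))=\p_w(gB)\in\Bbb{C}(w)_0$ and then invokes Proposition \ref{prop:kpreimcw} (hence the bundle machinery behind Proposition \ref{prop:d0a0}) to produce a control set $\Bbb{D}$ with $gu_*b_0\in\Bbb{D}_0$; you instead observe that any $b\in b_u(gB)$ is fixed by some $h\in S\cap\la(gB)$ (Proposition \ref{prop:fixtipou}) and that, since $h$ is an interior element and the orbit map of $G$ on $G/AN$ is open, $b\in\mathrm{int}(S^*b)$, so $b$ is self-accessible and lies in the transitivity set of the control set $[b]_w$. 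This is precisely the ``fixed points of regular elements'' viewpoint that the paper only exploits later (Theorem \ref{teo:controleweylk}), and it bypasses Proposition \ref{prop:kpreimcw} entirely at this step; both arguments then conclude with Lemma \ref{lema:cpftu-in-d0}. For the invariance claim the paper is shorter: $u\in C$ gives $w=1\in W(S)$ and item 5 of Proposition \ref{prop:kpreimcw} applies directly to its $\Bbb{D}$, whereas you must first identify your $\Bbb{D}$ as lying over $\Bbb{C}$ via Proposition \ref{prop:pid0c0} (using disjointness of distinct control sets on $\F$) before applying Proposition \ref{prop:ccik}; this costs a few extra lines but is sound. The trade-off is that your argument is more self-contained and dynamical, while the paper's reuses Proposition \ref{prop:kpreimcw}, which it needs elsewhere anyway and which delivers the invariance essentially for free.
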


\begin{proof}
    Let $w=\mathrm{Ad}(u)|_{\a} \in W$. From (\ref{eq:bu}) and Proposition \ref{prop:controleweyl},
    \[
    \pi(b_u(gB))=\p_w(gB)\in \Bbb{C}(w)_0.
    \]
    Writing $u(B)=u_*(M_0A)(B)$ and taking $b=gu_*b_0\in b_u(gB)\cap \pi^{-1}(\p_w(gB))$ there exists, by Proposition \ref{prop:kpreimcw}, a control set $\Bbb{D}\subset G/AN$ such that $b \in \Bbb{D}_0$. Therefore, by Lemma \ref{lema:cpftu-in-d0}, $b_u(gB) \subset \Bbb{D}_0$. Moreover, if $u \in C$, then $w=1$ and therefore, by Proposition \ref{prop:kpreimcw}, $\Bbb{D}$ is $S$-invariant.
\end{proof}

\begin{lema}\label{lema:controleul1}
    Let $g, g' \in G$ be such that $gB, g'B \in \mathcal{B}(S)$.
    \begin{enumerate}
        \item\label{lema:controleul11} \textcolor{black}{If $b(gB)$ and $b(g'B)$ are contained in the transitivity set of the same invariant control set}, then for all $u \in U$ we have that $b_u(gB)$ and $b_u(g'B)$ are contained in the transitivity set of the same control set.
        \item\label{lema:controleul12}  \textcolor{black}{If $b_u(gB)$ and $b_u(g'B)$ are contained in the transitivity set of the same control set,} for some $u \in U$, then $b(gB)$ and $b(g'B)$ are contained in the transitivity set of the same invariant control set.
    \end{enumerate} 
\end{lema}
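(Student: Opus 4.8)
The key structural fact to exploit is the equivariance of the construction $b_u$ under the $G$-action recorded in Lemma~\ref{lema:fixtipou}(1), namely $b_u(gg'B)=gb_u(g'B)$, together with the bijection $u\mapsto w=\mathrm{Ad}(u)|_{\a}$ between $U$ and $W$ and the translation identity $b_u(\widetilde c(gB)gB)=b_{u\widetilde c}(gB)$ from Lemma~\ref{lema:fixtipou}(2). The statement to be proved says, roughly, that whether two fixed point components of type $u$ lie in the same control set does \emph{not} depend on $u$. Since $u=1$ corresponds to attractors and the unique invariant control set on $\F$, and the general case is obtained by applying an element that fixes $b(gB)$ and $b(g'B)$ simultaneously, my plan is to move between the two fixed point components by a single element $h$ belonging to a Weyl chamber, and then transport this $h$ along the $U$-orbit using the algebra built up in Subsection~\ref{sec:pontfixu}.

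\textbf{Item (1).} Assume $b(gB)$ and $b(g'B)$ lie in the transitivity set $\Bbb{D}_0$ of the same invariant control set $\Bbb{D}$. By Proposition~\ref{prop:d0-cpftu} applied with $w=1$ (so $u=1$), after possibly translating by an element of $C$, I may take $gB,g'B\in\mathcal{B}(S)$ with $b(gB),b(g'B)\subset\Bbb{D}_0$. The idea is then: since both points are self-accessible in the same control set, there exist $\phi,\psi\in S$ with $\phi(b(gB))\in b(g'B)$ and $\psi(b(g'B))\in b(gB)$; by Proposition~\ref{prop:acaocomppftu} these maps send the whole component $b(gB)$ onto $b(g'B)$ and conversely. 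Now fix $u\in U$ and set $w=\mathrm{Ad}(u)|_{\a}$. Write $u(B)=u_*(M_0A)(B)$ with $u_*\in M_*$; then $b_u(gB)=gu_*M_0(B)b_0$ and $b_u(g'B)=g'u_*M_0(B)b_0$. The point is that the \emph{same} elements $\phi,\psi\in S$ do the job: using the description of $\la(gB)\cap S\neq\emptyset$ and that $gB\in\mathcal{B}(S)$, I can choose $\phi,\psi$ coming from regular elements in $S$ lying in the chambers $\la(gB)$, $\la(g'B)$, and such regular elements fix $b_u(gB)$ (resp.\ $b_u(g'B)$) by Proposition~\ref{prop:fixtipou}. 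So I should instead argue as follows: there is $\phi\in S$ with $\phi\,g u_* b_0 \in g' u_* M_0(B) b_0$; this is obtained by applying to the identity $\phi\,g b_0 = g' m_0 b_0$ (self-accessibility in $\Bbb{D}$) the conjugation by $u_*$, using that $u_*\in M_*$ normalizes $A$ and $M_0$, hence $\phi$ can be replaced by $u_* \phi u_*^{-1}$-type adjustments inside $S$ — but $S$ need not be $M_*$-invariant, so this is where care is needed (see obstacle below). The cleaner route: by Proposition~\ref{prop:cpftu-d0}, $b_u(gB)\subset\Bbb{D}'_0$ and $b_u(g'B)\subset\Bbb{D}''_0$ for control sets $\Bbb{D}',\Bbb{D}''$; one must show $\Bbb{D}'=\Bbb{D}''$. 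Apply Proposition~\ref{prop:d0a0} (or Proposition~\ref{prop:d0-cpftu}): both $\Bbb{D}'$ and $\Bbb{D}''$ project under $\pi$ to $\Bbb{C}(w)$, and over the base the relevant relation is governed by whether $\p_w(gB)$ and $\p_w(g'B)$ lie in the same control set on $\F$. But $b(gB),b(g'B)\in\Bbb{D}_0$ forces $\p(gB),\p(g'B)\in\Bbb{C}_0$, i.e.\ $gB$ and $g'B$ are $S$-equivalent in $\mathcal{B}(S)$ in the strong sense; by Proposition~\ref{prop:controleweyl} this is exactly the condition that $\p_w(gB),\p_w(g'B)\in\Bbb{C}(w)_0$ for \emph{every} $w$. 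Hence $\pi(b_u(gB))$ and $\pi(b_u(g'B))$ lie in $\Bbb{C}(w)_0$, and then I lift: choose $c_*\in M(B)$ realizing the fiber transitivity and use Lemma~\ref{lema:fixtipou}(3) plus Proposition~\ref{prop:dc=dc*} to identify the two lifts inside one control set, concluding $\Bbb{D}'=\Bbb{D}''$.

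\textbf{Item (2).} This is the converse and should follow by the same dictionary run backwards: if $b_u(gB),b_u(g'B)\subset\Bbb{D}_0$ for one control set $\Bbb{D}$, then $\pi(b_u(gB)),\pi(b_u(g'B))\in\pi(\Bbb{D}_0)=\Bbb{C}(w)_0$ by Proposition~\ref{prop:pid0c0}, so $\p_w(gB)$ and $\p_w(g'B)$ are in the transitivity set of the same control set on $\F$; by Proposition~\ref{prop:controleweyl} (the description of $\Bbb{C}(w)_0=\mathrm{fix}_w(S)$ and the fact that $\mathrm{fix}_w(S)$ is a single $S$-orbit closure pattern indexed the same way for all $w$) this forces $\p(gB),\p(g'B)\in\mathrm{atr}(S)=\Bbb{C}_0$, hence by Proposition~\ref{prop:cpftu-d0} with $u=1$ the components $b(gB),b(g'B)$ lie in transitivity sets of invariant control sets, and by item (1)'s lifting argument (now with $u=1$) in the \emph{same} one.

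\textbf{Main obstacle.} The delicate point is that $S$ is not assumed connected nor invariant under conjugation by $M_*$, so one cannot naively conjugate a transition map $\phi\in S$ by $u_*$ to pass from type $1$ to type $u$. The resolution is to route everything through the base flag manifold $\F$: on $\F$, Proposition~\ref{prop:controleweyl} tells us that the transitivity sets $\mathrm{fix}_w(S)$ for different $w$ are ``parallel'' — they are indexed by $\mathcal{B}(S)$ in a $w$-independent way (the condition $\la(gB)\cap S\neq\emptyset$ does not mention $w$). So two cosets $gB,g'B\in\mathcal{B}(S)$ have $\p_w(gB)\sim_s\p_w(g'B)$ for one $w$ iff for all $w$. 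Pulling this back through $\pi$ via Propositions~\ref{prop:d0a0}, \ref{prop:kpreimcw} and \ref{prop:d0-cpftu} upgrades it to the statement about $b_u$. Verifying that the pullback is faithful — i.e.\ that the fiberwise data (which control set among the $\Bbb{D}m$, $m\in M$) is also $u$-independent — is the real content, and it uses Lemma~\ref{lema:fixtipou}(3), Proposition~\ref{prop:dc=dc*} and the fact that $M_0(B)$ already lies in every transitivity set (Proposition~\ref{prop:kpreimcw}), so the fiberwise ambiguity is only by the finite group $C=M/M_0$, which is handled by the explicit action of $C$ in Definition~\ref{def:acaoC}.
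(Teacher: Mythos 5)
There is a genuine gap. Your plan correctly identifies the obstacle (one cannot conjugate a transition element of $S$ by $u_*$ since $S$ is not $M_*$-invariant), but the proposed resolution --- routing everything through the base flag manifold $\F$ --- cannot work, because the projection to $\F$ carries no information about the fiberwise question the lemma is actually asking. Indeed, for \emph{every} pair $gB,g'B\in\mathcal{B}(S)$ and every $w$, the points $\p_w(gB)$ and $\p_w(g'B)$ lie in $\Bbb{C}(w)_0$ by Proposition \ref{prop:controleweyl}, regardless of whether $b(gB)$ and $b(g'B)$ lie in the same invariant control set upstairs. So your assertion that the base relation ``governs'' the relation upstairs would prove that $b_u(gB)$ and $b_u(g'B)$ are \emph{always} in the same control set, which is false (there are in general several invariant control sets on $K$, parameterized by $C(S)\setminus C$). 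You acknowledge that ``verifying that the pullback is faithful \ldots is the real content,'' but you never supply that verification: resolving the ambiguity by the finite group $C$ is exactly the statement to be proved, and citing Lemma \ref{lema:fixtipou}(3), Proposition \ref{prop:dc=dc*} and Definition \ref{def:acaoC} does not produce the required element of $S$ carrying a point of one type-$u$ component into the transitivity set containing the other.

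What the paper's proof actually does, and what is missing from your proposal, is a dynamical argument with regular elements. For item 1, from $\widetilde g\, b(gB)=b(g'B)$ with $\widetilde g\in S$ one gets $(\widetilde g)^{-1}g'B=ngB$ with $n\in N(gB)$, and then for $h\in\la(gB)\cap S$ the contraction $h^{-k}nh^{k}\to 1$ (Proposition \ref{prop:fixtipou}) shows that $h^{-k}(\widetilde g)^{-1}g'u_*b_0$ converges to a point of $b_u(gB)\subset\Bbb{D}_0$; openness of $\Bbb{D}_0$ then yields the element $\widetilde g h^{k_0}\in S$ realizing $\Bbb{D}\leq\Bbb{D}'$. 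For item 2 the connecting element lies in $N_u(gB)$, which is \emph{not} contained in $N(gB)$, and the proof essentially requires the decomposition $N_u(B)=(N_u(B)\cap N^{-})(N_u(B)\cap N)$ of Proposition \ref{prop:proposicao2} so that the forward iterates $h^{k}$ contract the $N^{-}$-part; your proposal never invokes this decomposition, and without it the converse direction does not go through. Your item 2 argument, as written, reduces to the (vacuous) base statement and then appeals circularly to ``item (1)'s lifting argument.''
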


\begin{proof}
    Let $u \in U$. By Proposition \ref{prop:cpftu-d0}, there are control sets $\Bbb{D}$ and $\Bbb{D}'$, on $G/AN$, such that $b_u(gB)\subset \Bbb{D}_0$ and $b_u(g'B)\subset \Bbb{D}'_0$. \textcolor{black}{If $b_1 \in b(gB)$ and $b_2 \in b(g'B)$, then by hypothesis and definition of transitivity set, there exists $\widetilde{g} \in S$ such that $\widetilde{g}b_1=b_2$. By Proposition \ref{prop:acaocomppftu}, $\widetilde{g}b(gB)=b(g'B)$. From Lemma \ref{lema:fixtipou}, we have that
    \[
    b(gB)=(\widetilde{g})^{-1} b(g'B)= b((\widetilde{g})^{-1}g'B).
    \]} 
    By Lemma \ref{lema:fixtipou}, there exists $n \in N(gB)$ such that $\textcolor{black}{(\widetilde{g})^{-1}}g'B=ngB$. Hence, $b_u(\textcolor{black}{(\widetilde{g})^{-1}}g'B) = nb_u(gB)$, by the Lemma \ref{lema:fixtipou}. Using the notation $u(B)=u_*(M_0A)(B)$ we have, by (\ref{eq:bu}), that
    \[
    \textcolor{black}{(\widetilde{g})^{-1}}g'u_*M_0(B)b_0=ngu_*M_0(B)b_0.
    \]
    Hence, there is $m_0\in M_0(B)$ such that $\textcolor{black}{(\widetilde{g})^{-1}}g'u_*b_0=ngu_*m_0b_0$. Let $h \in \la(gB) \cap S$. By Proposition \ref{prop:fixtipou}, $gu_*m_0b_0$ is a fixed point of $h^k$, for all $k \in \N$. Thus, if $k \rightarrow \infty$, then, \textcolor{black}{by Proposition \ref{prop:fixtipou},}
    \begin{equation*}
        h^{-k}\textcolor{black}{(\widetilde{g})^{-1}}g'u_*b_0 = h^{-k}ngu_*m_0b_0 = h^{-k}nh^kgu_*m_0b_0 \rightarrow gu_*m_0b_0\in b_u(gB)\subset \Bbb{D}_0.
    \end{equation*}
    Since $S$ is an open semigroup, we have that $\Bbb{D}_0$ is an open set and, therefore, there is $k_0$ such that $h^{-k_0}\textcolor{black}{(\widetilde{g})^{-1}}g'u_*b_0 \in\Bbb{D}_0$. This shows that $\Bbb{D} \leq \Bbb{D}'$, since $g'u_*b_0 \in b_u(g'B)\subset\Bbb{D}'_0$ and \textcolor{black}{$\widetilde{g}h^{k_0} \in S$}, and reversing the roles of $gB$ and $g'B$ we get equality, which shows item 1.
	
    For item 2, let $\Bbb{D}$ and $\Bbb{D}'$ be invariant control sets such that $b(gB)$ and $b(g'B)$ are contained in $\Bbb{D}_0$ and $\Bbb{D}'_0$, respectively, by Proposition \ref{prop:cpftu-d0}. \textcolor{black}{If $b_1 \in b_u(g'B)$ and $b_2 \in b_u(gB)$, then by hypothesis and definition of transitivity set, there exists $\widetilde{g} \in S$ such that $\widetilde{g}b_1=b_2$. From Proposition \ref{prop:acaocomppftu} and  Lemma \ref{lema:fixtipou}, we have that
    \[
    b_u(gB)=\widetilde{g} b_u(g'B)= b_u(\widetilde{g}g'B).
    \]} 
    From Lemma \ref{lema:fixtipou}, $\textcolor{black}{\widetilde{g}}g'=ngm_0$, with $n\in N_u(gB)$ and $m_0\in (M_0A)(B)$. By Lemma \ref{lema:nu}, there exists $n_0\in N$ such that $n=gu_*n_0u_*^{-1}g^{-1}$, where $u(B)=u_*(M_0A)( B)$, with \textcolor{black}{$u_*\in M_*$}. Hence,
    \[
    \textcolor{black}{\widetilde{g}}g'= (gu_*n_0u_*^{-1}g^{-1})gm_0=gu_*n_0u_*^{-1}m_0\in gu_*Nu_*^{-1}B
    \] 
    so $g^{-1}\textcolor{black}{\widetilde{g}}g' \in u_*Nu_*^{-1}B=N_u(B)B$. By Proposition \ref{prop:proposicao2},
    \begin{eqnarray*}
        g^{-1}\textcolor{black}{\widetilde{g}}g'\in (N_u(B)\cap N^{-})(N_u(B)\cap N)B,
    \end{eqnarray*}
    i.e., $g^{-1}\textcolor{black}{\widetilde{g}}g'=n^-n'm$, with $n^- \in N_u(B)\cap N^{-}$, $n' \in N_u(B) \cap N$ and $m \in (M_0A)(B)$, which implies that $g^{-1}\textcolor{black}{\widetilde{g}}g'b_0=n^-mb_0$. Take $h \in \la(gB)\cap S$. Thus $g^{-1}hg \in \la_0$ and, \textcolor{black}{by Proposition \ref{prop:fixtipou},} $mb_0$ is a fixed point of $(g^{-1}hg)^{k}$, for all $k \in \N $. So if $k \rightarrow \infty$, then
    \begin{eqnarray*}
        h^{k}\textcolor{black}{\widetilde{g}}g'b_0
	&=& h^{k}gg^{-1}\textcolor{black}{\widetilde{g}}g'b_0
	=h^{k}gn^-mb_0\\
	&=&g(g^{-1}hg)^{k}n^-(g^{-1}hg)^{-k}mb_0 \rightarrow gmb_0\in b(gB)\subset \Bbb{D}_0.
    \end{eqnarray*}
    Since $S$ is an open semigroup, $\Bbb{D}_0$ is an open set and, therefore, there is $k_0$ such that $h^{k_0}\textcolor{black}{\widetilde{g}}g'b_0 \in\Bbb{ D}_0$. This shows that $\Bbb{D}' \leq \Bbb{D}$, since $g'b_0 \in b(g'B)\subset \Bbb{D}_0'$ \textcolor{black}{and $h^{k_0}\widetilde{g} \in S$}, and reversing the roles of $gB$ and $g'B$ we get equality, which concludes the proof.
\end{proof}

\textcolor{black}{In the following theorem, the first main result of the paper, we show that the control sets of $S$ on the maximal compact subgroup $K$ are parameterized by the elements of the group $U$. This is the precise analog of what happens in the maximal flag manifold $K/M$, where the control sets of $S$ are parameterized by the elements of the Weyl group $W$.}

\begin{teorema}\label{teo:Du}
\textcolor{black}{
    Let $\Bbb{D}$ be an invariant control set on $G/AN$. Given $u \in U$, there exists a control set $\Bbb{D}(u)$ with the following property: If $gB \in \mathcal{B}(S)$, then $b(gB)\subset \Bbb{D}_0$ if and only if $b_u(gB)\subset \Bbb{D}(u)_0$. Moreover, if $c \in C$, then
    \begin{enumerate}
	\item $\Bbb{D}(uc)=\Bbb{D}(u)c$ and $\Bbb{D}(cu)=\Bbb{D}(u)u^{-1}cu$.
	\item $\Bbb{D}(c)$ is an invariant control set and $\Bbb{D}(1)=\Bbb{D}$.
	\item $\Bbb{D}c=\Bbb{D}$ if and only if $\Bbb{D}(cu)=\Bbb{D}(u)$.
        \item $\pi(\textcolor{black}{\Bbb{D}(u)_0})=\Bbb{C}(w)_0$, if $w=\mathrm{Ad}(u)|_{\a}$.
    \end{enumerate}
}
\end{teorema}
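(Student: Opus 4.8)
The plan is to build $\Bbb{D}(u)$ from a single well-chosen fixed point component, show the construction is independent of that choice, and then read off everything from the characterizing biconditional together with Lemmas \ref{lema:controleul1} and \ref{lema:fixtipou} and Propositions \ref{prop:cpftu-d0} and \ref{prop:dc=dc*}. Since $\Bbb{D}$ is $S$-invariant, Proposition \ref{prop:ccik} gives $\pi(\Bbb{D})=\Bbb{C}=\Bbb{C}(1)$, hence $\Bbb{D}\subset\pi^{-1}(\Bbb{C}(1))$; applying Proposition \ref{prop:d0-cpftu} with $w=1$ and $u=1$ to any point of $\Bbb{D}_0$ furnishes a coset $gB\in\mathcal{B}(S)$ with $b(gB)\subset\Bbb{D}_0$. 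Fixing such a $gB$, Proposition \ref{prop:cpftu-d0} produces a control set, to be called $\Bbb{D}(u)$, with $b_u(gB)\subset\Bbb{D}(u)_0$. Independence of the choice of $gB$ is exactly Lemma \ref{lema:controleul1}(\ref{lema:controleul11}): if $g_1B,g_2B\in\mathcal{B}(S)$ both satisfy $b(g_iB)\subset\Bbb{D}_0$, then $b(g_1B),b(g_2B)$ lie in the transitivity set of the one invariant control set $\Bbb{D}$, so $b_u(g_1B),b_u(g_2B)$ lie in the transitivity set of a common control set.

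Next I would prove the biconditional. One direction is immediate: if $gB\in\mathcal{B}(S)$ and $b(gB)\subset\Bbb{D}_0$, then $gB$ is an admissible choice in the construction, so $b_u(gB)\subset\Bbb{D}(u)_0$. For the converse, suppose $b_u(gB)\subset\Bbb{D}(u)_0$; choosing $g_0B\in\mathcal{B}(S)$ with $b(g_0B)\subset\Bbb{D}_0$ we also have $b_u(g_0B)\subset\Bbb{D}(u)_0$, so $b_u(gB)$ and $b_u(g_0B)$ lie in the transitivity set of the same control set, and Lemma \ref{lema:controleul1}(\ref{lema:controleul12}) then places $b(gB)$ and $b(g_0B)$ in the transitivity set of the same invariant control set; since $b(g_0B)\subset\Bbb{D}_0$ this forces $b(gB)\subset\Bbb{D}_0$.

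The four assertions then follow by bookkeeping. Fix once and for all $gB\in\mathcal{B}(S)$ with $b(gB)\subset\Bbb{D}_0$, so $b_v(gB)\subset\Bbb{D}(v)_0$ for every $v\in U$. For (1): $b_{uc}(gB)=b_u(gB)c\subset\Bbb{D}(u)_0c=(\Bbb{D}(u)c)_0$ by Lemma \ref{lema:fixtipou} and Proposition \ref{prop:dc=dc*}, hence $\Bbb{D}(uc)=\Bbb{D}(u)c$; since $C$ is normal in $U$ we write $cu=u(u^{-1}cu)$ with $u^{-1}cu\in C$ and apply the previous identity to get $\Bbb{D}(cu)=\Bbb{D}(u)\,u^{-1}cu$. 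For (2): $b_1(gB)=b(gB)\subset\Bbb{D}_0$ gives $\Bbb{D}(1)=\Bbb{D}$, and the last clause of Proposition \ref{prop:cpftu-d0} shows $\Bbb{D}(c)$ is $S$-invariant for $c\in C$. For (4): by Proposition \ref{prop:pid0c0} we have $\pi(\Bbb{D}(u)_0)=\Bbb{C}(w')_0$ for some $w'$, while $\pi(b_u(gB))=\p_w(gB)\in\mathrm{fix}_w(S)=\Bbb{C}(w)_0$ by equality (\ref{eq:projfixu0}) and Proposition \ref{prop:controleweyl}; as $\p_w(gB)$ then lies in two transitivity sets, $\Bbb{C}(w')=\Bbb{C}(w)$. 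For (3): using Lemmas \ref{lema:nu}, \ref{lema:classelateralS}, \ref{lema:fixtipou} and Proposition \ref{prop:dc=dc*} one obtains $gc_*B\in\mathcal{B}(S)$, $b_{cu}(gB)=b_u(gc_*B)$, and $b(gc_*B)=b_c(gB)=b(gB)c\subset(\Bbb{D}c)_0$ (with $c(B)=c_*(M_0A)(B)$). If $\Bbb{D}c=\Bbb{D}$ then $b(gc_*B)\subset\Bbb{D}_0$, so the biconditional applied to $gc_*B$ yields $b_u(gc_*B)=b_{cu}(gB)\subset\Bbb{D}(u)_0$, whence $\Bbb{D}(cu)=\Bbb{D}(u)$; conversely $\Bbb{D}(cu)=\Bbb{D}(u)$ gives $b_u(gc_*B)\subset\Bbb{D}(u)_0$, hence $b(gc_*B)\subset\Bbb{D}_0$ and thus $(\Bbb{D}c)_0\cap\Bbb{D}_0\neq\emptyset$, i.e. $\Bbb{D}c=\Bbb{D}$.

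The main obstacle is not in this theorem but behind it: the real work sits in Lemma \ref{lema:controleul1}, which transports equality of transitivity sets between the $u$-level and the base level using the contraction dynamics of hyperbolic elements of $S$ and the decomposition of Proposition \ref{prop:proposicao2}. Granting that lemma, the only delicate bookkeeping here is keeping the canonical identifications $b_{cu}(gB)=b_u(gc_*B)$ and $b_c(gB)=b(gB)c$ straight, so that the normality of $C$ in $U$ produces the conjugated factor $u^{-1}cu$ in (1) and (3) rather than a bare $c$.
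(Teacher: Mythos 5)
Your proposal is correct and follows essentially the same route as the paper: construct $\Bbb{D}(u)$ from a coset $g'B\in\mathcal{B}(S)$ with $b(g'B)\subset\Bbb{D}_0$ via Propositions \ref{prop:d0-cpftu} and \ref{prop:cpftu-d0}, obtain the characterizing biconditional from the two halves of Lemma \ref{lema:controleul1}, and then derive items 1--4 by the same bookkeeping with Lemmas \ref{lema:nu}, \ref{lema:classelateralS}, \ref{lema:fixtipou} and Propositions \ref{prop:dc=dc*}, \ref{prop:controleweyl}, \ref{prop:pid0c0}. Your closing remark is also accurate: the substantive content lives in Lemma \ref{lema:controleul1}, exactly as in the paper.
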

\begin{proof}
\textcolor{black}{
    By the Proposition \ref{prop:d0-cpftu}, there exists $g'B$ in $\mathcal{B}(S)$ such that $b(g'B)\subset \Bbb{D}_0$. By the Proposition \ref{prop:cpftu-d0}, there exists a control set $\Bbb{D}(u)$ such that $b_u(g'B) \subset \Bbb{D}(u)_0$. Therefore, if $gB \in \mathcal{B}(S)$, then, from Lemma \ref{lema:controleul1}, $b(gB) \subset \Bbb{D}_0$ if and only if $b_u(gB) \subset \Bbb{D}(u)_0$.
}

\textcolor{black}{
    Moreover, if $gB\in \mathcal{B}(S)$ is such that $b(gB)\subset \Bbb{D}_0$, then, from Lemma \ref{lema:fixtipou},
    \[
    b_{uc}(gB)=b_{u}(gB)c\subset \Bbb{D}(uc) \cap \Bbb{D}(u)c.
    \]
    Therefore $\Bbb{D}(uc) = \Bbb{D}(u)c$, which shows the first equality in 1. The second equality is a consequence of the first one and the fact that $u^{-1}cu \in C$. The first statement in 2 follows from Proposition \ref{prop:cpftu-d0} and the second one follows from the definition of $\Bbb{D}(1)$ and the fact that $b_1(gB)=b(gB)$. 
}

\textcolor{black}{
    To show item 3, let $gB \in \mathcal{B}(S)$ such that $b(gB) \subset \Bbb{D}_0$. From Lemma \ref{lema:classelateralS}, $c(gB)gB \in \mathcal{B}(S)$. From Lemma \ref{lema:fixtipou},
    \[
    b(c(gB)gB)=b(gB)c\subset \Bbb{D}_0c \text{ and } b_u(c(gB)gB)=b_{cu}(gB) \subset \Bbb{D}(cu)_0.
    \]
    Hence, if $\Bbb{D}c=\Bbb{D}$, then 
    $b_u(c(gB)gB) \subset \Bbb{D}(cu) \cap \Bbb{D}(u)$, which shows that $\Bbb{D}(cu) = \Bbb{D}(u)$. Conversely, if $\Bbb{D}(cu)=\Bbb{D}(u)$, then 
    $b(c(gB)gB) \subset \Bbb{D}c \cap \Bbb{D}$, which shows that $\Bbb{D}c = \Bbb{D}$.
}

\textcolor{black}{
    For item 4, since $\Bbb{D}$ is an invariant control set there exists, by Proposition \ref{prop:d0-cpftu}, $g \in G$ such that
    \[
    gB \in \mathcal{B}(S)~\text{ and }~gb_0\in b(gB)\subset \Bbb{D}_0
    \]
    and thus $b_u(gB) \subset \Bbb{D}(u)_0$. It follows, from the equality in (\ref{eq:projfixu0}), that $\pi(b_u(gB))=\p_w(gB)$. As $\p_w(gB) \in \Bbb{C}(w)_0$, by Proposition \ref{prop:controleweyl}, it follows that $\pi(\Bbb{D}(u)_0)=\Bbb{C}(w)_0$, by Proposition \ref{prop:pid0c0}, which completes the proof.
}
\end{proof}

\subsection{Characterization of control sets}\label{sec:desccontrolset}
By choosing $\la_0$, at the beginning of Section \ref{sec:mainresults}, we have $\la_0 \cap S \neq \emptyset$. So by Definition \ref{def:B(S)}, $B \in \mathcal{B}(S)$. Let $\Bbb{D}$ be the invariant control set such that
\[
b(B)=M_0(B)b_0 \subset \Bbb{D}_0,
\]
by Proposition \ref{prop:cpftu-d0}. \emph{The set of cosets determined by $S$, associated with \textcolor{black}{$\Bbb{D}$}}, \textcolor{black}{$\mathcal{B}_0(S)$}, is defined by
\begin{equation*}\label{not:BSc}
    \textcolor{black}{\mathcal{B}_0(S)}=\{gB: gB \in \mathcal{B}(S) \text{ and } b(gB) \subset \textcolor{black}{\Bbb{D}_0}\}.
\end{equation*}

\begin{definicao}\label{def:fix-atr}
    Take $u \in U$. The set of \emph{fixed points of $S$ of type $u$} is given by
    \textcolor{black}{
\begin{equation}\label{eq:fixSuc}
    \mathrm{fix}_u(S)=\bigcup_{gB \in \mathcal{B}_0(S)}b_{u}(gB).
\end{equation}}
\end{definicao}

\textcolor{black}{In the following theorem, the second main result of the paper, we show that the transitive sets of the control sets of $S$ on the maximal compact subgroup $K$ are characterized by the fixed points of $u$-type. This is the precise analog of what happens in the maximal flag manifold $K/M$, where the transitive sets of the control sets of $S$ are characterized by the fixed points of $w$-type.}

\begin{teorema}\label{teo:controleweylk}
    If $u \in U$, then \textcolor{black}{$\Bbb{D}(u)_0=\mathrm{fix}_{u}(S)$}. Furthermore, if $\Bbb{D}' \subset G/AN$ is a control set, then there exist $u \in U$ such that $\Bbb{D}'=\textcolor{black}{\Bbb {D}(u)}$.
\end{teorema}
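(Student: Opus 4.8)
The plan is to prove the two assertions in turn. The equality $\Bbb{D}(u)_{0}=\mathrm{fix}_{u}(S)$ is essentially a repackaging of Theorem \ref{teo:Du} and Proposition \ref{prop:d0-cpftu}: for $\mathrm{fix}_{u}(S)\subset\Bbb{D}(u)_{0}$, note that if $gB\in\mathcal{B}_{0}(S)$ then $gB\in\mathcal{B}(S)$ and $b(gB)\subset\Bbb{D}_{0}$, so Theorem \ref{teo:Du} gives $b_{u}(gB)\subset\Bbb{D}(u)_{0}$, and taking the union over $gB\in\mathcal{B}_{0}(S)$ yields the inclusion. For the reverse inclusion, take $b\in\Bbb{D}(u)_{0}$ and set $w=\mathrm{Ad}(u)|_{\a}$; by item 4 of Theorem \ref{teo:Du} we have $\pi(\Bbb{D}(u)_{0})=\Bbb{C}(w)_{0}$, hence $\Bbb{D}(u)\subset\pi^{-1}(\Bbb{C}(w))$, so Proposition \ref{prop:d0-cpftu} applied to the control set $\Bbb{D}(u)$ and to $u$ produces $gB\in\mathcal{B}(S)$ with $b\in b_{u}(gB)\subset\Bbb{D}(u)_{0}$. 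Since $gB\in\mathcal{B}(S)$, the equivalence in Theorem \ref{teo:Du} forces $b(gB)\subset\Bbb{D}_{0}$, i.e. $gB\in\mathcal{B}_{0}(S)$, so $b\in\mathrm{fix}_{u}(S)$.

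For the surjectivity statement, given a control set $\Bbb{D}'\subset G/AN$ I would first apply Proposition \ref{prop:pid0c0} to choose $w\in W$ with $\pi(\Bbb{D}'_{0})=\Bbb{C}(w)_{0}$ (so $\Bbb{D}'\subset\pi^{-1}(\Bbb{C}(w))$) and then pick $u_{0}\in U$ with $\mathrm{Ad}(u_{0})|_{\a}=w$; by Proposition \ref{prop:d0-cpftu} there is $gB\in\mathcal{B}(S)$ with $b_{u_{0}}(gB)\subset\Bbb{D}'_{0}$. The difficulty is that $gB$ need not lie in $\mathcal{B}_{0}(S)$: by Proposition \ref{prop:cpftu-d0} the component $b(gB)$ lies in the transitivity set of \emph{some} invariant control set $\Bbb{D}''$, which a priori differs from the fixed $\Bbb{D}$. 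To correct this I would use item 1 of Theorem \ref{teo:imainvDm} to write $\Bbb{D}''=\Bbb{D}m$ with $m\in M$ and then Proposition \ref{prop:dc=dc*} to rewrite this as $\Bbb{D}''=\Bbb{D}c$, $\Bbb{D}''_{0}=\Bbb{D}_{0}c$, where $c\in C$ is the class of $m$. Now set $g'B:=(c^{-1})(gB)\,gB$, which is again in $\mathcal{B}(S)$ by Lemma \ref{lema:classelateralS}. Using items 2 and 3 of Lemma \ref{lema:fixtipou} one checks $b(g'B)=b_{c^{-1}}(gB)=b(gB)\cdot c^{-1}\subset(\Bbb{D}_{0}c)\cdot c^{-1}=\Bbb{D}_{0}$, so $g'B\in\mathcal{B}_{0}(S)$, and, by item 2 of Lemma \ref{lema:fixtipou}, $b_{cu_{0}}(g'B)=b_{u_{0}}(gB)\subset\Bbb{D}'_{0}$. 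Putting $u:=cu_{0}\in U$, the fact that $g'B\in\mathcal{B}_{0}(S)$ together with Theorem \ref{teo:Du} gives $b_{u}(g'B)\subset\Bbb{D}(u)_{0}$; since also $b_{u}(g'B)\subset\Bbb{D}'_{0}$ and transitivity sets of distinct control sets are disjoint, $\Bbb{D}'=\Bbb{D}(u)$.

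I expect the correction-by-$c$ step to be the main obstacle: $\Bbb{D}(u)$ is attached to the one invariant control set $\Bbb{D}$ fixed at the start of the section, whereas an arbitrary $\Bbb{D}'$ only forces its companion coset $gB$ to carry $b(gB)$ into a possibly different invariant control set, and one must convert that mismatch into a shift by an element of $C$ while keeping track of what happens to $b_{u_{0}}(gB)$. All of this bookkeeping is exactly what the identities $b_{u'}(u(gB)gB)=b_{uu'}(gB)$ and $b_{u}(gB)c=b_{uc}(gB)$ of Lemma \ref{lema:fixtipou}, together with Lemma \ref{lema:classelateralS} and Proposition \ref{prop:dc=dc*}, are designed to handle; the remaining verifications (e.g. that $(\Bbb{D}_{0}c)\cdot c^{-1}=\Bbb{D}_{0}$, which uses the $M_{0}$-invariance of transitivity sets) are routine.
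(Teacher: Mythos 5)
Your proof is correct. The first half coincides with the paper's argument: the inclusion $\mathrm{fix}_u(S)\subset\Bbb{D}(u)_0$ is the definition of $\mathrm{fix}_u(S)$ combined with Theorem \ref{teo:Du}, and the reverse inclusion is item 4 of Theorem \ref{teo:Du} together with Proposition \ref{prop:d0-cpftu} and the equivalence in Theorem \ref{teo:Du}, exactly as you wrote. For the surjectivity statement the paper is more economical: having chosen $u$ with $\mathrm{Ad}(u)|_{\a}=w$, it observes that $\Bbb{D}'$ and $\Bbb{D}(u)$ both lie over $\Bbb{C}(w)$ (Proposition \ref{prop:pid0c0} and item 4 of Theorem \ref{teo:Du}), so Theorem \ref{teo:imainvDm} and Proposition \ref{prop:dc=dc*} give $\Bbb{D}'=\Bbb{D}(u)c$ for some $c\in C$, and item 1 of Theorem \ref{teo:Du} converts this to $\Bbb{D}(uc)$. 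You instead perform the $C$-correction upstream, on the coset $gB$ produced by Proposition \ref{prop:d0-cpftu}, replacing it by $g'B=c^{-1}(gB)gB$ so as to land in $\mathcal{B}_0(S)$; this is valid (your uses of Lemma \ref{lema:classelateralS}, of items 2 and 3 of Lemma \ref{lema:fixtipou}, and of the disjointness of transitivity sets of distinct control sets all check out, and the step $(\Bbb{D}_0c)\cdot c^{-1}=\Bbb{D}_0$ is indeed routine from Propositions \ref{prop:dc=dc*} and \ref{prop:kpreimcw}), but it amounts to re-deriving the identity $\Bbb{D}(u)c=\Bbb{D}(uc)$ by hand rather than quoting it. The paper's route buys brevity; yours makes explicit where the shift by $c$ actually acts, namely on the cosets in $\mathcal{B}(S)$ and the fixed-point components $b_u(gB)$.
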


\begin{proof}
    \textcolor{black}{The inclusion $\mathrm{fix}_u(S) \subset \Bbb{D}(u)_0$ follows from (\ref{eq:fixSuc}) and Theorem \ref{teo:Du}}. To show equality, take $b \in \textcolor{black}{\Bbb{D}(u)_0}$. \textcolor{black}{We have from Theorem \ref{teo:Du}, item 4,} and Proposition \ref{prop:d0-cpftu}, that there is $gB$ in $\mathcal{B}(S)$ such that $b\in b_{u}(gB)\subset \textcolor{black}{\Bbb{D}(u)_0}$. \textcolor{black}{By Theorem \ref{teo:Du}, $b(gB) \subset \Bbb{D}_0$, and from (\ref{eq:fixSuc}), $b\in b_{u}(gB)\subset \mathrm{fix}_{u}(S)$. Therefore, $\Bbb{D}(u)_0=\mathrm{fix}_u(S)$.}
	
    For the second statement, let $w \in W$ and $u \in U$ be such that $\pi(\Bbb{D}'_0)=\Bbb{C}(w)_0$, by Proposition \ref{prop:pid0c0}, and $w=\mathrm{Ad}(u)|_{\a}$. By Theorems \ref{teo:imainvDm} and \ref{teo:Du}, \textcolor{black}{items 1 and 4}, and Proposition  \ref{prop:dc=dc*}  we have that there exists $c \in C$ such that
    \[
    \Bbb{D}'=\textcolor{black}{\Bbb{D}(u)c=\Bbb{D}(uc)},
    \]
    which completes the proof.
\end{proof}

Given $g \in G$ we have
\[
\frac{U(gB)}{C(gB)}\simeq W(gB)\simeq \Ad((M_*A)(gB))|_{\a(gB)}.
\]
Hence, for every $w \in W$, there exists $u \in U$ such that $u(gB)=u_*(M_0A)(gB)$, with $u_*\in (M_*A)(gB)$, and
\[
u(gB)C(gB)=w(gB)=\Ad(u_*)|_{\a(gB)}.
\]
Then, using the notation introduced on the page \pageref{def:w=Adu}, we have that
\[
uC=w=\Ad(u)|_{\a}.
\]
The group $W$ acts on $C$ by conjugation. Indeed, given $w\in W$ and $c\in C$, there exists $u \in U$ such that $w=uC$. We define
\begin{equation}\label{eq:acaowc}
    wcw^{-1}=ucu^{-1}.
\end{equation}
Since $C$ is abelian, the equality in (\ref{eq:acaowc}) is well defined, i.e., it does not depend on the representative $u \in U$, it only depends on the coset $w=uC$.

Let $w \in W$. By Propositions \ref{prop:kpreimcw} and \ref{prop:pid0c0} there exists a control set $\widetilde{\Bbb{D}}$ such that $\pi(\widetilde{\Bbb{D}}_0)=\Bbb{C}(w )_0$. Let us consider the subset $C(S,w)$ of $C$ defined by
\begin{equation}\label{eq:msw}
    C(S,w)=\{c \in C : \widetilde{\Bbb{D}}c=\widetilde{\Bbb{D}}\}.
\end{equation}
It is clear that $C(S,w)$ is a subgroup of $C$. The subgroup $C(S,1)$ is simply denoted by $C(S)$. If $\Bbb{D}'$ is another control set such that $\pi(\Bbb{D}'_0)=\Bbb{C}(w)_0$, then, by Propositions \ref{prop:Dm}, \ref{prop:Ucanonico} and \ref{prop:dc=dc*}, there exists $c \in C$ such that $\Bbb{D}'=\widetilde{\Bbb{D}}c$. Hence, using the fact that $C$ is abelian, it is straightforward to show that the subgroup $C'(S,w)$ defined by $\Bbb{D}'$ is equal to $C(S,w)$.

\begin{proposicao}\label{msw=ms1}
    If $w \in W$, then $C(S,w)=w^{-1}C(S)w$.
\end{proposicao}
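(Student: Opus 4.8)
The plan is to translate the statement into a relation between control sets using the parameterization $\Bbb{D}(u)$ from Theorem \ref{teo:Du} and the action of $C$ described in item 1 of that theorem. Fix $u \in U$ with $w = \mathrm{Ad}(u)|_{\a}$, so that $uC = w$. By Theorem \ref{teo:Du}, item 4, the control set $\Bbb{D}(u)$ satisfies $\pi(\Bbb{D}(u)_0) = \Bbb{C}(w)_0$; hence, by the remark preceding the statement (that $C(S,w)$ does not depend on the choice of control set over $\Bbb{C}(w)$), we may compute $C(S,w)$ using $\widetilde{\Bbb{D}} = \Bbb{D}(u)$.

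The key computation is then: for $c \in C$, I want to show $\Bbb{D}(u)c = \Bbb{D}(u)$ if and only if $wcw^{-1} \in C(S)$, i.e. if and only if $\Bbb{D}(wcw^{-1}) = \Bbb{D}$ (recall $\Bbb{D} = \Bbb{D}(1)$ is the fixed invariant control set and $C(S) = C(S,1)$ consists of those $c' \in C$ with $\Bbb{D}c' = \Bbb{D}$). Using Theorem \ref{teo:Du}, item 1, we have $\Bbb{D}(uc) = \Bbb{D}(u)c$ and $\Bbb{D}(cu) = \Bbb{D}(u)\,u^{-1}cu$. The first step is to rewrite $\Bbb{D}(u)c = \Bbb{D}(uc)$, so that $\Bbb{D}(u)c = \Bbb{D}(u)$ becomes $\Bbb{D}(uc) = \Bbb{D}(u)$. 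Next, since $u^{-1}(wcw^{-1})u = u^{-1}(ucu^{-1})u = cu^{-1}\cdot u \cdot u^{-1} \cdot \dots$ — more carefully, writing $c' = wcw^{-1} = ucu^{-1} \in C$, item 1 gives $\Bbb{D}(c'u) = \Bbb{D}(u)\,u^{-1}c'u = \Bbb{D}(u)\,u^{-1}(ucu^{-1})u = \Bbb{D}(u)(cu^{-1}u) = \Bbb{D}(u)c \cdot (\text{correction})$; I will need to check the bookkeeping here, but the upshot should be $\Bbb{D}(c'u) = \Bbb{D}(uc)$. Combining, $\Bbb{D}(u)c = \Bbb{D}(u)$ iff $\Bbb{D}(uc) = \Bbb{D}(u)$ iff $\Bbb{D}(c'u) = \Bbb{D}(u)$, and by item 3 of Theorem \ref{teo:Du} (applied with $u$ and $c'$), this holds iff $\Bbb{D}c' = \Bbb{D}$, i.e. iff $c' = wcw^{-1} \in C(S)$.

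From this equivalence, $c \in C(S,w) \iff wcw^{-1} \in C(S) \iff c \in w^{-1}C(S)w$, which is exactly the claimed equality $C(S,w) = w^{-1}C(S)w$.

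The main obstacle I anticipate is the index bookkeeping in the middle step: the identities in Theorem \ref{teo:Du}, item 1, mix left and right actions of $C$ on control sets and conjugation inside $U$, and since $C$ is abelian but $U$ need not be, one must be careful that $u^{-1}c'u$ collapses correctly to something expressible via $c$ alone. Concretely, I must verify that $u^{-1}(ucu^{-1})u$ simplifies as an element of $C$ acting on $\Bbb{D}(u)$ so as to match $\Bbb{D}(u)c$; this uses that conjugation by $u$ preserves $C$ (since $C$ is normal in $U$) and that, after the substitution, item 3 of Theorem \ref{teo:Du} is being applied to the correct pair. Once this is pinned down, everything else is a direct chain of equivalences, and the abelianness of $C$ guarantees the subgroup $C(S,w)$ is well-defined independently of the representative $u$ of $w$.
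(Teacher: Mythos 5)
Your proposal is correct and follows essentially the same route as the paper's proof: both reduce the claim to items 1 and 3 of Theorem \ref{teo:Du} (with item 4 and the remark preceding the proposition justifying that $C(S,w)$ may be computed from $\Bbb{D}(u)$), differing only in that you run the chain of equivalences from $c\in C(S,w)$ to $ucu^{-1}\in C(S)$ while the paper goes the other way. The bookkeeping you flagged is harmless: $u^{-1}(ucu^{-1})u=c$ exactly, so $\Bbb{D}(c'u)=\Bbb{D}(u)c=\Bbb{D}(uc)$ with no correction needed.
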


\begin{proof}
    \textcolor{black}{Let $c \in C$. Writing $w=uC$, with $u \in U$, we have from Theorem \ref{teo:Du}, items 1 and 3, that $\Bbb{D}c=\Bbb{D}$ if and only if $\Bbb{D}(u)u^{-1}cu=\Bbb{D}(cu)=\Bbb{D}(u)$, which completes the proof.}
\end{proof}

\textcolor{black}{The following theorem, the third main result of the paper, shows that the number of control sets of $S$ on the maximal compact subgroup $K$ is given by the product of the number of $S$-invariant control sets on $K$ times the number of control sets of $S$ on the maximal flag manifold $K/M$.}

\begin{teorema}\label{teo:nocontrolsetk}
    If $w \in W$, then there is a bijection between the control sets on $\pi^{-1}(\Bbb{C}(w))$ and the set of cosets $C(S,w) \setminus C$. Furthermore, the number of control sets of $S$ on $K\simeq G/AN$ is the product of the cardinalities of $C(S)\setminus C$ and $W(S)\setminus W$.
\end{teorema}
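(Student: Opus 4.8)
The plan is to assemble the structural results already established. For the first assertion, fix $w \in W$. By Propositions~\ref{prop:kpreimcw} and \ref{prop:pid0c0} there is a control set $\widetilde{\Bbb{D}}$ of $S$ on $G/AN$ with $\pi(\widetilde{\Bbb{D}}_0) = \Bbb{C}(w)_0$; writing $\widetilde{\Bbb{D}} = \mathrm{cl}(Sk) \cap \mathrm{int}(S^{-1}k)$ for $k \in \widetilde{\Bbb{D}}_0$ and using the equivariance of $\pi$ one gets $\pi(b) \sim_w \pi(k) \in \Bbb{C}(w)_0$ for every $b \in \widetilde{\Bbb{D}}$, so $\widetilde{\Bbb{D}} \subset \pi^{-1}(\Bbb{C}(w))$ and Theorem~\ref{teo:imainvDm} applies to it. I would then define a map $\Phi : C \to \{\text{control sets of }S\text{ contained in }\pi^{-1}(\Bbb{C}(w))\}$ by $\Phi(c) = \widetilde{\Bbb{D}}c$ (the action of Definition~\ref{def:acaoC}). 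This is well defined because, by Propositions~\ref{prop:Dm} and \ref{prop:dc=dc*}, $\widetilde{\Bbb{D}}c = \widetilde{\Bbb{D}}c_*$ for a representative $c_* \in M$ of $c$, so it is a control set with $\pi\big((\widetilde{\Bbb{D}}c)_0\big) = \pi(\widetilde{\Bbb{D}}_0) = \Bbb{C}(w)_0$ and hence is contained in $\pi^{-1}(\Bbb{C}(w))$. Surjectivity of $\Phi$ is exactly item~1 of Theorem~\ref{teo:imainvDm}: a control set $\Bbb{D}' \subset \pi^{-1}(\Bbb{C}(w))$ is $\widetilde{\Bbb{D}}m$ for some $m \in M$, and with $c = mM_0 \in M/M_0 \simeq C$ one has $\Bbb{D}' = \widetilde{\Bbb{D}}c = \Phi(c)$ by Proposition~\ref{prop:dc=dc*}.

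Next I would compute the fibers of $\Phi$. Transporting the $C$-action to the right $M$-action through $M/M_0$ via Proposition~\ref{prop:dc=dc*} shows it is a genuine right action (associativity being visible already at the level of transitivity sets, by Proposition~\ref{prop:Dm}), so $(\widetilde{\Bbb{D}}c)c' = \widetilde{\Bbb{D}}(cc')$ for all $c,c' \in C$. Consequently $\Phi(c) = \Phi(c')$ if and only if $\widetilde{\Bbb{D}}\big(c(c')^{-1}\big) = \widetilde{\Bbb{D}}$, i.e. $c(c')^{-1} \in C(S,w)$ by the definition (\ref{eq:msw}), and since $C$ is abelian this is exactly the condition $C(S,w)c = C(S,w)c'$. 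As $C(S,w)$ does not depend on the choice of $\widetilde{\Bbb{D}}$ (the discussion following (\ref{eq:msw})), $\Phi$ descends to a bijection between $C(S,w)\setminus C$ and the set of control sets of $S$ contained in $\pi^{-1}(\Bbb{C}(w))$, which is the first claim. For the count on $G/AN$, observe that the control sets of $S$ on $G/AN$ are partitioned according to their image under $\pi$: by Proposition~\ref{prop:pid0c0} each control set $\Bbb{D}$ satisfies $\pi(\Bbb{D}_0) = \Bbb{C}(w)_0$ for a control set $\Bbb{C}(w)$ of $S$ on $\F$, unique since distinct control sets have disjoint transitivity sets, and $\Bbb{D} \subset \pi^{-1}(\Bbb{C}(w))$ by the argument of the first paragraph; conversely every control set of $S$ on $\F$ is some $\Bbb{C}(w)$. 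Summing over the distinct control sets of $S$ on $\F$ and using the bijection just established,
\[
\#\{\text{control sets of }S\text{ on }G/AN\}
= \sum_{\Bbb{C}(w)} |C(S,w)\setminus C|
= |W(S)\setminus W|\cdot |C(S)\setminus C|,
\]
where the last equality uses that the number of control sets of $S$ on $\F$ is $|W(S)\setminus W|$ (Proposition~\ref{prop:controleweyl}) and that, by Proposition~\ref{msw=ms1}, $C(S,w) = w^{-1}C(S)w$, so $|C(S,w)| = |C(S)|$ since $W$ acts on $C$ by automorphisms via (\ref{eq:acaowc}).

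Most of this is bookkeeping once Theorem~\ref{teo:imainvDm} and Propositions~\ref{prop:Dm}, \ref{prop:dc=dc*}, \ref{msw=ms1}, \ref{prop:controleweyl} are in hand. The step I expect to require the most care is the computation of the fibers of $\Phi$: one must verify that the right $C$-action on control sets really is an action (so that $\widetilde{\Bbb{D}}c = \widetilde{\Bbb{D}}c'$ reduces, via commutativity of $C$, to membership in a stabilizer) and that this stabilizer coincides with $C(S,w)$ independently of the chosen $\widetilde{\Bbb{D}}$; the minor point that a control set on $G/AN$ is genuinely contained in a single $\pi^{-1}(\Bbb{C}(w))$, rather than merely mapping into its closure, is handled by the $\sim_w$ argument above.
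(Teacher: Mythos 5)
Your proposal is correct and follows essentially the same route as the paper: a fixed control set over $\Bbb{C}(w)$ is translated by the right $C$-action, Theorem~\ref{teo:imainvDm} and Proposition~\ref{prop:dc=dc*} give surjectivity, the stabilizer of the action is $C(S,w)$ by definition (\ref{eq:msw}), and the count follows from Propositions~\ref{msw=ms1} and \ref{prop:controleweyl}. The only cosmetic difference is that you anchor the bijection at an arbitrary control set $\widetilde{\Bbb{D}}$ with $\pi(\widetilde{\Bbb{D}}_0)=\Bbb{C}(w)_0$ (obtained from Propositions~\ref{prop:kpreimcw} and \ref{prop:pid0c0}), whereas the paper anchors it at the specific set $\Bbb{D}(u)$ of Theorem~\ref{teo:Du}; since $C(S,w)$ is independent of this choice, the two are interchangeable.
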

\begin{proof}
    Let $u \in U$ be such that $w=\mathrm{Ad}(u)|_{\a}$. By \textcolor{black}{Theorem \ref{teo:Du}, item 4,} Proposition \ref{prop:dc=dc*}, and Theorem \ref{teo:imainvDm}, $\pi^{-1}(\Bbb{C}(w)_0) =\bigcup_{c \in C}\textcolor{black}{\Bbb{D}(u)}_0c$. As $\textcolor{black}{\Bbb{D}(u)c=\Bbb{D}(u)c'}$ if and only if $C(S,w)c=C(S,w)c' $ we have that
    \[
    \pi^{-1}(\Bbb{C}(w)) \to C(S,w)\setminus C, ~~ \textcolor{black}{\Bbb{D}(u)}c \mapsto C(S,w)c
    \]
    is a well defined and injective map. This map is also surjective and this shows the first statement. For the second part, we combine the first one with the fact that subgroups $C(S,w)$, with different $w$, have the same cardinality of $C(S)$, by Proposition \ref{msw=ms1}, and that the number of control sets on the maximal flag manifold is the cardinality of $W(S)\setminus W$.
\end{proof}

Let $\Theta = \Theta(S)$ be the flag type of $S$. Let $\pi_{\Theta}:G/AN \to \F_{\Theta}$ be the canonical projection and $\Bbb{C}_{\Theta}$ the $S$-invariant control set on $ \F_{\Theta}$. If $K_{\Theta}$ denotes the centralizer of $\a_{\T}$ in $K$, then the fibers of $\pi_{\Theta}$ are diffeomorphic to $P_{\Theta}/AN\simeq K_{\Theta}$ and $\pi_{\Theta}^{-1}(\Bbb{C}_{\Theta})$ is diffeomorphic to $\Bbb{C}_{\Theta} \times K_{ \Theta}$. The right action of $M$ on $\Bbb{C}_{\Theta} \times K_{ \Theta}$ is given by $(b,k)\cdot m =(b,km)$, with $m \in M$ and $(b,k) \in \Bbb{C}_{\Theta} \times K_{\Theta}$. Furthermore, if $S$ is connected and $\Bbb{D} \subset G/AN$ is a $S$-invariant control set, then $\Bbb{D}\simeq \Bbb{C}_{\Theta} \times K_{\Theta}^0$, where $K_{\Theta}^0$ is the identity component of $K_{\Theta}$ (see \cite{sms}, Corollary 3.4). Therefore, denoting by $[m]$ the class of an element $m \in M$, in the group $C$, we have
\[
C(S)=\{[m]:\Bbb{D}m=\Bbb{D}\}=\left\{[m]:m \in K_{\T}^0\right\}=\frac{M\cap K_{\T}^0}{M_0}.
\]
As $W(S)=W_{\T}$ we have the following result.

\begin{teorema}
    If $S$ is connected, then the group $C(S)$ is determined by $W(S)$. In particular, if $W(S)=\{1\}$, then $C(S)=\{1\}$ and if $W(S)=W$, then $C(S)=C$ .
\end{teorema}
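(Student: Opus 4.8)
The plan is to read everything off the identity
\[
C(S)=\frac{M\cap K_{\T}^0}{M_0},\qquad \T=\T(S),
\]
established immediately before the theorem, together with the equality $W(S)=W_{\T}$ from Proposition \ref{prop:controleweyl}. First I would note that the flag type $\T\subset\Sigma$ is recovered from the subgroup $W(S)$: the assignment $\T\mapsto W_{\T}$ from subsets of $\Sigma$ to standard parabolic subgroups of $W$ is injective, since $\T$ is precisely the set of simple roots $\alpha$ whose reflection $s_\alpha$ lies in $W_{\T}$. Hence $\T$ is a function of $W(S)$, and since the right-hand side of the displayed identity depends only on $\T$, the subgroup $C(S)\leq C$ is determined by $W(S)$. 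That is the whole of the first assertion.

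For the two extreme cases I would simply evaluate the identity. If $W(S)=\{1\}$ then $\T=\emptyset$, so $\a_{\emptyset}=\a$ and $K_{\emptyset}$ is the centralizer of $\a$ in $K$, which is $M$ by definition; thus $K_{\emptyset}^0=M_0$ and $C(S)=(M\cap M_0)/M_0=\{1\}$. If $W(S)=W$ then $\T=\Sigma$; since $\g$ is semisimple the restricted roots span $\a^*$, so $\a_{\Sigma}=\{H\in\a:\alpha(H)=0\ \forall\,\alpha\in\Sigma\}=\{0\}$, and $K_{\Sigma}$ is then the centralizer of $\{0\}$ in $K$, i.e.\ all of $K$; as $K$ is connected, $K_{\Sigma}^0=K$ and $C(S)=(M\cap K)/M_0=M/M_0=C$.

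There is no real obstacle here: the theorem is a corollary of the displayed formula, and that formula is precisely where the hypothesis that $S$ be connected is used, through Corollary 3.4 of \cite{sms}, which gives $\Bbb{D}\simeq\Bbb{C}_{\T}\times K_{\T}^0$ for an $S$-invariant control set $\Bbb{D}$. The only items needing a word of justification are the injectivity of $\T\mapsto W_{\T}$ and the identifications $\a_{\emptyset}=\a$, $\a_{\Sigma}=\{0\}$, $K_{\emptyset}=M$, $K_{\Sigma}=K$, all of which are standard structure theory for real semisimple Lie algebras; so the proof will be short.
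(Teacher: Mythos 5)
Your proposal is correct and follows exactly the route the paper takes: the theorem is read off from the identity $C(S)=(M\cap K_{\T}^0)/M_0$ (which is where connectedness of $S$ enters, via Corollary 3.4 of \cite{sms}) together with $W(S)=W_{\T}$, and the two extreme cases are evaluated just as you do. The paper leaves the injectivity of $\T\mapsto W_{\T}$ and the identifications $K_{\emptyset}=M$, $K_{\Sigma}=K$ implicit, so your write-up is if anything slightly more complete.
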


\subsection{Examples}\label{sec:exemplos}

\begin{exemplo}
    Let $G=\mathrm{Sl}(2,\R)$ be the group of ~$2\times 2$ matrices with entries in $\R$ and determinant 1. If $A$ and $N$ denote the subgroups of $G$ of the diagonal matrices with positive entries and the upper triangular matrices with ones on the diagonal, respectively, then $G$ decomposes as $G=\mathrm{SO(2)}AN$. The centralizer and normalizer of $A$ in $\mathrm{SO(2)}$ denoted, respectively, by $M$ and $M_*$ are given by
    \[
    M=\left\{
    \left(
    \begin{array}{cc}
        1&0\\
	0&1
    \end{array}
    \right),
    \left(
    \begin{array}{cc}
        -1&0\\
	0&-1
    \end{array}
    \right)
    \right\}
    \]
    and
    \[
    M_*=\left\{
    \left(
    \begin{array}{cc}
        1&0\\
	0&1
    \end{array}
    \right),
    \left(
    \begin{array}{cc}
        0&-1\\
        1&0
    \end{array}
    \right),
    \left(
    \begin{array}{cc}
        -1&0\\
	0&-1
    \end{array}
    \right),
    \left(
    \begin{array}{cc}
        0&1\\
	-1&0
    \end{array}
    \right)
    \right\}.
    \]
    The Weyl group is
    \[
    W=\left\{
    \left(
    \begin{array}{cc}
        1&0\\
	0&1
    \end{array}
    \right),
    \left(
    \begin{array}{cc}
        0&1\\
	1&0
    \end{array}
    \right)
    \right\}.
    \]
	
    The group $G$ acts on the projective space $\Bbb{P}^1$ by $g[x]=[gx]$, where $[x]$ is the subspace generated by the non-zero vector $x \in \R^ 2$. Identifying the subgroup $\mathrm{SO(2)}$ with the space of rays with origin at $(0,0)$, we have that the action of $G$ on $\mathrm{SO(2)}$, induced by the Iwasawa decomposition, is given by $g\langle x\rangle=\langle gx\rangle$, where $\langle x\rangle$ is the ray with origin at $(0,0)$ and containing the non-zero vector $x$.
	
    Given $H=\mathrm{diag}(a,-a)$, with $a>0$, we have that
    \[
    h^t=\mathrm{exp}(tH)=\mathrm{diag}(e^{ta},e^{-ta}).
    \]
    So
    \begin{itemize}
        \item In $\Bbb{P}^1$, $h^t$ has two fixed points, namely,
	\[
	\p=[(1,0)] ~\textrm{ and }~ w\p=[(0,1)],
	\]
	where $w=\left(
	\begin{array}{cc}
		0&1\\
		1&0
	\end{array}
        \right) \in W$. Moreover, $\p$ is the only attractor and $w\p$ is the only repeller of $h^t$ in $\Bbb{P}^1$. The stable manifold of $\p$ is open and dense in $\Bbb{P}^1$.
	\item In $\mathrm{SO}(2)$, $h^t$ has four fixed points, namely,
	\[
	b_0=\langle (1,0)\rangle, ~ u_1b_0=\langle (0,1)\rangle, ~ cb_0=\langle (-1,0)\rangle ~\textrm{ and }~ u_2b_0=\langle           (0,-1)\rangle,
	\]
	where $c=-\mathrm{Id} \in M$ and 
	$u_1=\left(
	\begin{array}{cc}
		0&-1\\
		1&0
	\end{array}
	\right)$,
	$u_2=\left(
	\begin{array}{cc}
		0&1\\
		-1&0
	\end{array}
        \right)\in M_*$. Moreover, $b_0$ and $cb_0$ are the attractors and $u_1b_0$ and $u_2b_0$ are the repellers of $h^t$ in $\mathrm{SO}(2)$. The union of stable manifolds of attractors is open and dense in $\mathrm{SO}(2)$.
    \end{itemize} 
    
    Let $S = \mathrm{Sl}^+(2,\R)$ be the semigroup of matrices, in $G$, with positive entries. We have that $S$ is an open semigroup of $G$. Given $x=(x_1,x_2)$ and $y=(y_1,y_2)$ in $\R^2$, with $x_i,y_i>0$, let
    \[
    g=\sqrt{\frac{x_1x_2}{3y_1y_2}}
    \left(
    \begin{array}{cc}
	2y_1/x_1 & y_1/x_2 \\
	y_2/x_1 & 2y_2/x_2 \\
    \end{array}
    \right).
    \]
    We have $g\in S$, $g[x]=[y]$ and $g\langle x \rangle =\langle y \rangle$. Therefore, $S$ is transitive on $\mathrm{int}\Bbb{C}$ and $\mathrm{int}\Bbb{D}$, where
    \[
    \Bbb{C}=\{[(x_1,x_2)]:x_i\geq 0, ~\text{for all}~ i\}  \subset \Bbb{P}^1
    \]
    and
    \[
    \Bbb{D}=\{\langle(x_1,x_2)\rangle:x_i\geq 0, ~\text{for all}~ i\} \subset \mathrm{SO}(2).
    \]
    Moreover taking 
    \[
    g'=
    \left(
    \begin{array}{cc}
	2 & 1 \\
	1 & 1 \\
    \end{array}
    \right) \in S
    \]
    we have that $g'(1,0)=(2,1)$ and $g'(0,1)=(1,1)$, i.e., $g'[(1,0)]$ and  $g'[(0,1)]$ are in $\mathrm{int}\Bbb{C}$ and $g'\langle (1,0)\rangle$ and  $g'\langle(0,1)\rangle$ are in $\mathrm{int}\Bbb{D}$. Hence $\mathrm{cl}(S[x])=\Bbb{C}$ for all $[x] \in \Bbb{C}$, and $\mathrm{cl}(S\langle x\rangle)=\Bbb{D}$ for all $\langle x \rangle \in \Bbb{D}$ and this implies that $\Bbb{C}$ and $\Bbb{D}$ are invariant control sets of $S$ on $\Bbb{P}^1$ and $\mathrm{SO}(2)$, respectively. Similarly, the set $\Bbb{D}c$, which is given by
    \[
    \Bbb{D}c=\{\langle(x_1,x_2)\rangle:x_i\leq 0, ~\text{for all}~ i\} \subset \mathrm{SO}(2)
    \]
    is an invariant control set for $S$ on $\mathrm{SO}(2)$.
	
    Given $x=(x_1,x_2)$ and $y=(y_1,y_2)$ in $\R^2$, with $x_1,y_1<0$ and $x_2,y_2>0$, let
    \[
    g=\sqrt{\frac{x_1x_2}{3y_1y_2}}
    \left(
    \begin{array}{cc}
	2y_1/x_1 & -y_1/x_2 \\
	-y_2/x_1 & 2y_2/x_2 \\
    \end{array}
    \right).
    \]
    We have $g\in S$, $g[x]=[y]$ and $g\langle x \rangle =\langle y \rangle$. Therefore, $S$ is transitive on $\Bbb{C}'$ and $\Bbb{D}'$, where
    \[
    \Bbb{C}'=\{[(x_1,x_2)]:x_1< 0 ~\textrm{and}~ x_2>0\}  \subset \Bbb{P}^1
    \]
    and
    \[
    \Bbb{D}'=\{\langle(x_1,x_2)\rangle:x_1<0 ~\text{and}~ x_2>0\} \subset \mathrm{SO}(2).
    \] 
    Hence, $\Bbb{C}'\subset \mathrm{cl}(S[x])$, for all $[x] \in \Bbb{C}'$ and $\Bbb{D}'\subset \mathrm{cl}(S\langle x\rangle)$, for all $\langle x \rangle \in \Bbb{D}'$ and thus $\Bbb{C}'$ and $\Bbb{D}'$ are control sets of $S$ on $\Bbb{P}^1$ and $\mathrm{SO}(2)$, respectively. Similarly, the set $\Bbb{D}'c$, which is given by
    \[
    \Bbb{D}'c=\{\langle(x_1,x_2)\rangle:x_1>0 ~\text{and}~ x_2<0\} \subset \mathrm{SO}(2)
    \]
    is a control set of $S$ on $\mathrm{SO}(2)$. As $\Bbb{C}$ and $\Bbb{C}'$ form a partition of $\Bbb{P}^1$ and $\Bbb{D}$, $\Bbb{D}c$, $\Bbb{D }'$ and $\Bbb{D}'c$ form a partition of $\mathrm{SO}(2)$ it follows that these are the only control sets of $S$ on $\Bbb{P}^1$ and $\mathrm{SO}(2)$, respectively. Considering the canonical projection $\pi:\mathrm{SO}(2)\to \Bbb{P}^1$ we have that
    \[
    \pi(\Bbb{D})=\pi(\Bbb{D}c)=\Bbb{C} ~\textrm{ and }~ \pi(\Bbb{D}')=\pi(\Bbb{D}'c)=\Bbb{C}'.
    \]
    As the order of $M$ is two and there are exactly two invariant control sets, on $\mathrm{SO}(2)$, we have that $C(S)=\{\mathrm{Id}\}$.
\end{exemplo}

\begin{exemplo}
    Let $G=\mathrm{Sl}(3,\R)$ be the group of ~$3\times 3$ matrices with entries in $\R$ and determinant 1. The group $G$ acts on the projective space $\Bbb{P}^2$ by $g[x]=[gx]$, where $[x]$ is the subspace generated by the non-zero vector $x \in \R^3$.
	
    We have that the isotropy subgroup of the point $[e_1] \in \Bbb{P}^2$, where $\mathcal{B}=\{e_1,e_2,e_3\}$ is the canonical basis for $\R^3$, is the subgroup $P$ of $G$ whose elements are written in the form
    \[
    \left(
    \begin{array}{ccc}
	* & * & * \\
	0 & * & * \\
	0 & * & * \\
    \end{array}
    \right).
    \]
    As the action of $G$ on $\Bbb{P}^2$ is transitive it follows that $\Bbb{P}^2\simeq G/P$. Let $\la_0$ be the Weyl chamber in $G$ of the diagonal matrices $h=\mathrm{diag}(a_1,a_2,a_3)$ such that $a_1>a_2>a_3>0$. This Weyl chamber determines an Iwasawa decomposition of $G$, which can be written as $G=\mathrm{SO}(3)AN$, where $A$ and $N$ are the subgroups of $G$ of the diagonal matrices with positive entries and the upper triangular matrices with ones on the diagonal, respectively. The centralizer of $A$ in $\mathrm{SO}(3)$ is the subgroup
    \[
    M=\left\{
    \mathrm{diag}(1,1,1), 
    \mathrm{diag}(1,-1,-1), 
    \mathrm{diag}(-1,1,-1), 
    \mathrm{diag}(-1,-1,1)
    \right\}.
    \]
    Taking $H=\mathrm{diag}(2,-1,-1)$ and denoting by $K_H$ the centralizer of $H$ in $\mathrm{SO}(3)$, the subgroup $P$ decomposes as $P=K_HAN$. Therefore, the fiber of the canonical projection $G/AN \to \Bbb{P}^2$ is diffeomorphic to the subgroup $K_H$. Note also that if $K_H^0$ denotes the identity component of $K_H$, then
    \[
    K_H^0=
    \left\{
    \left(
    \begin{array}{ccc}
	1 & 0 & 0 \\
	0 & \cos t & -\sin t \\
	0 & \sin t & \cos t \\
    \end{array}
    \right) : t \in \R
    \right\}
    \]
    and $K_H=K_H^0\cup (K_H^0k)$, where $k=\mathrm{diag}(-1,1,-1)$. In particular, $K_H$ has two connected components.
	
    Let $S = \mathrm{Sl}^+(3,\R)$ be the semigroup of matrices, in $G$, with positive entries. We have that $S$ is an open semigroup of $G$. Let $\Bbb{C}_{\Bbb{P}}$ be the subset of $\Bbb{P}^2$ defined by
    \[
    \Bbb{C}_{\Bbb{P}}=\{[(x_1,x_2,x_3)]:x_i\geq 0, ~\text{for all}~ i\}.
    \]
    Given $[x], [y] \in \mathrm{int}\Bbb{C}_{\Bbb{P}}$, $x=(x_1,x_2,x_3)$ and $y=(y_1,y_2,y_3)$, let
    \[
    g'=\sqrt[3]{\frac{x_1x_2x_3}{4y_1y_2y_3}}
    \left(
    \begin{array}{ccc}
	2y_1/x_1 & y_1/x_2 & y_1/x_3 \\
	y_2/x_1 & 2y_2/x_2 & y_2/x_3 \\
	y_3/x_1 & y_3/x_2 & 2y_3/x_3 \\
    \end{array}
    \right).
    \]
    We have $g' \in S$ and $g'[x]=[y]$. Therefore, $S$ is transitive on $\mathrm{int}\Bbb{C}_{\Bbb{P}}$. As for all $[x] \in \Bbb{C}_{\Bbb{P}}$, there exists $g \in S$ such that $g[x] \in \mathrm{int}\Bbb{C}_{\Bbb{P}}$ we have $\mathrm{cl}(S[x])=\Bbb{C}_{\Bbb{P}}$, for all $[x] \in \Bbb{C}_{\Bbb{P}}$. Therefore, $\Bbb{C}_{\Bbb{P}}$ is the invariant control set of $S$ in $\Bbb{P}^2$. Furthermore, the flag type of $S$ is the projective space $\Bbb{P}^2$ (see \cite{gfsm}, Corollary 3). Let $g \in G$ be such that $\la=g\la_0 g^{-1}$ intersects $S$. By Proposition 3.2 of \cite{sms} there are invariant control sets $\Bbb{D}$ and $\Bbb{D}'$, in $G/AN$, such that $\Bbb{D}$ is diffeomorphic to $\Bbb{C}_{\Bbb{P}}\times (gK_H^0g^{-1})$ and $\Bbb{D}'$ is diffeomorphic to $\Bbb{C}_{\Bbb{ P}}\times (gK_H^0kg^{-1})$ and, moreover, these are the only invariant control sets on $G/AN$. Therefore, the group $C(S)$, in this case, is conjugated, by $g$, of the subgroup
    \[
    \left\{
    \mathrm{diag}(1,1,1), 
    \mathrm{diag}(1,-1,-1)
    \right\}.
    \]
    The other groups $C(S,w)$ are conjugates of $C(S)$ by $w$. Since $S$ has 3 control sets on the maximal flag manifold of $\mathrm{Sl}(3,\R)$ (see \cite{smt}, Example 5.4) it follows that $S$ has 6 control sets on $\mathrm{SO}(3)$.
\end{exemplo}

\section{Acknowledgments}
La\'ercio dos Santos was partially supported by FAPEMIG RED-00133-21.


\begin{thebibliography}{XXX9}
    \bibitem{fib}  Braga Barros, C.J. and L.A.B. San Martin: \textit{On the action of semigroups in fiber bundles. }Mat. Contemp., \textbf{13} (1997), 1-19.
	
    \bibitem{dkv}  Duistermaat, J.J, Kolk, J.A.C. and Varadarajan, V.S.: \textit{Functions, flows and oscillatory integrals on flag manifolds and conjugacy classes in real semisimple Lie groups}. Compositio Math., \textbf{49} (1983), 309-398.
	
    \bibitem{gfsm} Gon\c calves Filho, J.R. and San Martin, L.A.B.: \textit{The compression semigroup of a cone is connected}. Portugaliae Mathematica, \textbf{60} (2003), 305-317.
	
    \bibitem{helgason} Helgason, S.\ {\em Differential Geometry, Lie Groups and Symmetric Spaces}. Academic Press, New York, (1978).

    \bibitem{j}  \textcolor{black}{Jurdjevic, V.: Geometric Control Theory. Cambridge University Press (1997).}
    
    \bibitem{js} \textcolor{black}{Jurdjevic, V. and Sallet, G.: \textit{Controllability properties of affine systems}. SIAM J. Control Optim., \textbf{22} (1984), 501-508.}
 
    \bibitem{knapp}  Knapp, A. W.: Lie Groups Beyond an Introduction. 2nd ed. Progress in Mathematics, v. 140, Birkh\"auser, Boston (2002).
	
    \bibitem{msm1}  Patr\~{a}o, M. and San Martin, L.A.B.: \textit{Morse decomposition of semiflows on fiber bundles}. Discrete Contin. Dyn. Syst., \textbf{17 (3)}, (2007), 561-587.
	
    \bibitem{msm0}  Patr\~{a}o, M. and San Martin, L.A.B.: \textit{Semiflows on Topological Spaces: Chain Transitivity and Semigroups}. J. Dyn. Diff. Eq., \textbf{19}, (2006), 155-180.

    \bibitem{rsms} \textcolor{black}{do Rocio, O.G., San Martin, L.A.B. and Santana, A.J.: \textit{Invariant cones and convex sets for bilinear control systems and parabolic type of semigroups}. J. Dyn. Control Syst., \textbf{12 (3)}, (2006), 419-432.}

    \bibitem{smm}  San Martin, L.A.B.: \textit{Maximal Semigroups in Semi-Simple Lie Groups}. Trans. Am. Math. Soc., \textbf{353} (2001), 5165-5184.

    \bibitem{smo}  San Martin, L.A.B.: \textit{Order and domains of attraction of control sets in flag manifolds}. J. Lie Theory, \textbf{8} (1998), 111-128.

    \bibitem{sm93}  San Martin, L.A.B.: \textit{Invariant Control Sets on Flag Manifolds}. Math. of Control, Signals, and Systems, \textbf{6} (1993), 41-61.

    \bibitem{sms} San Martin, L.A.B and Santana, A.J.: \textit{The homotopy type of Lie semigroups in semi-simple Lie groups}. Monatshefte fur Mathematik, \textbf{136} (2002), 151-173.

    \bibitem{smt}  San Martin, L.A.B. and Tonelli, P.A.: \textit{Semigroup actions on homogeneous spaces}. Semigroup Forum, \textbf{50} (1995), 59-88.

    \bibitem{selgrade}  Selgrade, J.: \textit{Isolated invariant sets for flows on vector bundles}. Trans. Am. Math. Soc., \textbf{203} (1975), 359-390.

    \bibitem{v}  Varadarajan, V.S.: Harmonic Analysis on Real Reductive Groups. Lecture Notes in Math., \textbf{576}, Springer-Verlag (1977).

    \bibitem{w}  Warner, G.: Harmonic analysis on semi-simple Lie groups I. Springer-Verlag (1972).	
\end{thebibliography}
\end{document}